\documentclass[10pt]{amsart}
\usepackage{eurosym}
\usepackage{amsfonts}
\usepackage{amsmath,amssymb,amsthm,color}
\usepackage{amsopn}
\usepackage{latexsym}
\usepackage{float}
\usepackage{bm}
\usepackage[utf8]{inputenc}
\usepackage[colorinlistoftodos]{todonotes}


\usepackage{hyperref}


\AtBeginDocument{
	\label{CorrectFirstPageLabel}
	
}

\oddsidemargin 0.13in
\evensidemargin 0.13in
\textwidth 6.2in
\topmargin -.2in
\headheight 0in
\headsep 1cm
\textheight 8.5in

\renewcommand{\d}{\mathrm{d}}

\newcommand{\mg}{\mathfrak g }

\newcommand{\mq}{\mathfrak q }

\newcommand{\mn}{\mathfrak n }
\newcommand{\mr}{\mathfrak r }

\newcommand{\mz}{\mathfrak z }

\newcommand{\mc}{\mathfrak c }

\newcommand{\mh}{\mathfrak h }
\newcommand{\ma}{\mathfrak a }

\newcommand{\so}{\mathfrak{so} }
\renewcommand{\sp}{\mathfrak{sp} }

\newcommand{\su}{\mathfrak{su} }
\newcommand{\lela}{ g\left(}
\newcommand{\rira}{\right)}

\newcommand{\bs}{\backslash}

\newcommand{\SO}{{\rm SO}}
\newcommand{\SU}{{\rm SU}}
\newcommand{\U}{{\rm U}}
\newcommand{\Sp}{\rm Sp}

\newcommand{\R}{\mathbb R}
  \renewcommand{\H}{\mathbb H}

\newcommand{\C}{\mathbb C}

\newcommand{\vol}{\rm vol}
\newcommand{\rG}{\rm G}

\newcommand{\non}{\nonumber}

\DeclareMathOperator{\ad}{ad}

\DeclareMathOperator{\tr}{tr}
\DeclareMathOperator{\im}{im}
\DeclareMathOperator{\diag}{diag}

\numberwithin{equation}{section}

 \linespread{1.05}

 \newtheorem{teo}{Theorem}[section]
 \newtheorem{pro}[teo]{Proposition}
 \newtheorem{cor}[teo]{Corollary}
 \newtheorem{lm}[teo]{Lemma}

 \theoremstyle{definition}
 \newtheorem{ex}[teo]{Example}
 \newtheorem{remark}[teo]{Remark}  
 \newtheorem{remarks}[teo]{Remarks}
 \newtheorem{assumption}[teo]{Assumption}

\newcommand{\nc}{\newcommand}
\nc{\Iso}{\operatorname{Iso}}
 \nc{\iso}{\mathfrak{iso}}
 \nc{\sso}{\mathfrak{so}}
\nc{\Ad}{\operatorname{Ad}} 
\nc{\Sym}{\mathrm{Sym}}
\nc{\Hol}{\mathrm{Hol}}

  \nc{\pr}{\operatorname{pr}} 
 \nc{\Dera}{\operatorname{Dera}} \nc{\Auto}{\operatorname{Auto}}
 \nc{\LL}{{\rm L}}
\nc{\dd}{{\rm d}}
\nc{\Id}{{\rm Id}}


\newcommand{\qandq}{\quad\text{and}\quad}

\newcommand{\qforq}{\quad\text{for}\quad}

\newcommand{\be}{{\bf e}}

\subjclass[2020]{22E25, 53C10, 53C07} 
\keywords{$\rG_2$-instanton, coclosed $\rG_2$-structure, 2-step nilpotent Lie group.}

\author[A. Clarke]{Andrew Clarke}
\address{A.~Clarke: Instituto de Matem\'atica, Universidade Federal do Rio de Janeiro, Av. Athos da Silveira Ramos 149, Rio de Janeiro, RJ, 21941-909, Brazil}
\email{andrew@im.ufrj.br} 

\author[V. del Barco]{Viviana del Barco}
\address{V.~del Barco: Instituto de Matemática, Estatística e Computação Científica, Universidade Estadual de Campinas,  Rua Sergio Buarque de Holanda, 651, Cidade Universitaria Zeferino Vaz, 13083-859, Campinas, São Paulo, Brazil.}
\email{delbarc@ime.unicamp.br}

\author[A.J. Moreno]{Andrés J. Moreno}
\address{A.~J.~Moreno: Instituto de Matemática, Estatística e Computação Científica, Universidade Estadual de Campinas,  Rua Sergio Buarque de Holanda, 651, Cidade Universitaria Zeferino Vaz, 13083-859, Campinas, São Paulo, Brazil.}
\email{amoreno@unicamp.br}

 \makeindex

\begin{document}

\title{$\rG_2$-instantons on $2$-step nilpotent Lie groups}

\date{\today}

\begin{abstract}
    We study the $\rG_2$-instanton condition for a family of metric connections arisen from the characteristic connection, on $7$-dimensional $2$-step nilpotent Lie groups with left-invariant coclosed $\rG_2$-structures. According to the dimension of the commutator subgroup, we establish necessary and sufficient conditions for the connection to be an instanton, in terms of the torsion of the $\rG_2$-structure, the torsion of the connection and the Lie group structure. Moreover, we show that in our setup, $\rG_2$-instantons define a naturally reductive structure on the simply connected $2$-step nilpotent Lie group with left-invariant Riemannian metric. Taking quotient by lattices, one obtains $\rG_2$-instantons on compact nilmanifolds.
\end{abstract}

\maketitle

\tableofcontents

\section{Introduction}

In 1998, Donaldson and Thomas  proposed the extension of  gauge theory ideas of dimension $2$, $3$ and $4$ to higher dimensional manifolds endowed with $G$-structures \cite{donaldson1998}. An important 
class of gauge fields are instantons, in which a connection satisfies a  first-order differential condition, that in many cases have variational characterisations via the Yang-Mills functional \cite{donaldson2005}. The instanton condition is formulated on a case-by-case basis according to the geometric setting, and is most often studied in the presence of a $G$-structure on the base manifold, where a certain tensor or spinor field determines algebraic conditions on the curvature of the connection. For example, on a $7$-manifold $M$ with $\rG_2$-structure defined by a positive $3$-form $\varphi$, a connection $\nabla$ on a vector bundle $E\rightarrow M$ is a \emph{$\rG_2$-instanton} if its curvature $R^\nabla$ satisfies the equation,
\begin{equation}\label{eq: instaton_equation_intro}
	R^\nabla\wedge \star \varphi=0,
\end{equation}where $\star$ denotes the Hodge dual operator induced by $\varphi$.

$\rG_2$-instantons were first constructed in the case of torsion-free $\rG_2$-structures on compact and non-compact manifolds by several different methods (e.g. \cite{Clarke2014,saearp2015,walpuski2013}). This was in part motivated by the suggestion in \cite{DoSe11,donaldson1998} that instantons on such manifolds could be used to define enumerative invariants that could distinguish different connected components of the moduli space of torsion-free $\rG_2$-structures.
They later became a subject of study for $\rG_2$-structures of several different non-zero torsion types (e.g. \cite{ball2019,lotay2018,waldron2022}). 
Notably, $\rG_2$-instantons 
have also attracted significant attention in theoretical physics, particularly in heterotic supergravity adapted to a $7$-dimensional base \cite{CG-FT05,dlOG21,OssaLaSv15,Iva10,LotaySEarp,Str86} in which the heterotic $\rG_2$-system requires a conformally coclosed $\rG_2$-structure to be coupled to a gauge field by a quadratic {\it Bianchi} condition.  
The gaugino variation present in this system requires the curvature of a certain connection to be of instanton type. Therefore,  constructing $\rG_2$-instantons is a first step to build solutions to the full heterotic $\rG_2$-system.

When working with coclosed $\rG_2$-structures $\varphi$, a natural connection to test the instanton condition is the {\em characteristic connection} of $\varphi$, that is, a metric connection $\nabla$ with skew-symmetric torsion for which $\nabla\varphi=0$ \cite{Friedrich2001}. Such a connection exists and is unique in a slightly more general setting than the coclosed one, namely, whenever $\d \star\varphi=\theta\wedge\star\varphi$ holds for some 1-form $\theta$. The characteristic connection was proven to be a $\rG_2$-instanton for nearly parallel $\rG_2$-structures \cite{Harland2012}, but this is not true in general.

In this paper, we study the instanton equation for the characteristic connection of left-invariant  $\rG_2$-structures on Lie groups. It is already known that this connection is an instanton on the Heisenberg Lie group of dimension 5 times an abelian factor,  and the Heisenberg Lie group of dimension 7 \cite{FIUV11}; in both cases $\varphi$ is taken to be purely coclosed. Also, an $\SU(3)$-instanton on the 6-dimensional free 2-step nilpotent Lie group on 3-generators $N_{2,3}$ lifts to the characteristic connection, which becomes $\rG_2$-instanton on $\R\times N_{2,3}$ \cite{IvIv05}; in this case $\varphi$ is not coclosed. Despite these examples, and to the best of our knowledge, there has not been yet considered a systematic study of the instanton condition for the characteristic connection on families of Lie groups endowed with left-invariant coclosed $\rG_2$-structures.

The goal of this paper is to contribute in this direction. We restrict ourselves to the family of (non-abelian) 2-step nilpotent Lie groups, and equip them with a left-invariant coclosed $\rG_2$-structure $\varphi$. Notice that  the left-invariant metric induced by $\varphi$  is not Einstein \cite{Mil76}, thus previous results for nearly-parallel manifolds do not apply here. 
Our techniques build on ideas used by the second named author in collaboration with Moroianu and Raffero \cite{dBMR} to study left-invariant coclosed (and purely-coclosed) $\rG_2$-structures on 2-step nilpotent Lie groups. We recall that any such  Lie group admits a coclosed $\rG_2$-structure, unless it is irreducible and has 2-dimensional commutator \cite{BFF18,dBMR}. To give our study a little more generality, we consider a 1-parameter family of connections $\nabla^\lambda$, including the characteristic connection as $\lambda=1$ (see \eqref{eq:np}). Then, we ask ourselves: when does $\nabla^\lambda$ satisfy the $\rG_2$-instanton condition \eqref{eq: instaton_equation_intro}?

The results we obtain are threefold. On the one hand, we get that the parameter $\lambda$ gives no generality to the problem. In fact, we show that if $\nabla^\lambda$ is an instanton, then $\lambda$ must be 1 and therefore we must pay attention to the characteristic connection only (see Lemma \ref{lm: instanton_condition_dim1} and Proposition \ref{pro:caracg2}).

On the other hand we obtain precise characteristics of the possible $\rG_2$-structures $\varphi$ for which the characteristic connection is an instanton. Indeed, we show that if this is the case and the Lie group has commutator subgroup of dimension 1, then $\varphi$ is purely coclosed. By contrast, if  this subgroup is of dimension $\geq 2$, then $\varphi$ is not purely coclosed.

Finally, we obtain results on the structure of the Lie group $G$ itself. To state this properly, let us denote by $\mg$ the Lie algebra of $G$. We show that if $G$ admits a coclosed $\rG_2$-structure for which $\nabla$ is a $\rG_2$-instanton, then $G$ is either the Heisenberg Lie group of dimension 7, or the one of dimension 5 times $\R^2$, or it is the quaternionic Heisenberg, or else the group $\R\times N_{2,3}$ mentioned above. 

The main result of the paper is to give the full classification of 2-step nilpotent Lie algebras (and thus simply connected Lie groups) and coclosed $\rG_2$-structures on them for which $\nabla$ is a $\rG_2$-instanton:
\begin{teo} 
On a simply connected $2$-step nilpotent Lie group $G$ with Lie algebra $\mg$, there exists a left-invariant coclosed $\rG_2$-structure $\varphi$ for which the characteristic connection is a $\rG_2$-instanton if and only if  there exists a basis $\{e^1, \ldots, e^7\}$ of $\mg^*$ such that $\varphi$ is given by \eqref{eq:gcvar} and the structure constants in that basis satisfy one of the following conditions:
\begin{enumerate}
    \item $\d e^i=0$ for $i=1, \ldots, 6$ and $\d e^7=a (e^{12}-e^{56})+b(e^{34}-e^{56})$, for some $a,b$ non simultaneously zero.
    \item $\d e^i=0$ for $i=1, \ldots, 4$ and $\dd e^5=\nu (e^{13}-e^{24})$, $\dd e^6=\nu(-e^{14}-e^{23})$, $\dd e^7=\nu(e^{12}+e^{34})$, for some $\nu\neq 0$.
     \item $\d e^i=0$ for $i=1, \ldots, 4$ and $\dd e^5=-2\nu e^{24}$, $\dd e^6=-2\nu e^{23}$, $\dd e^7=2\nu e^{34}$, for some $\nu\neq 0$.
\end{enumerate}
\end{teo}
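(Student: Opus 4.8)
The plan is to prove the two implications separately, handling sufficiency by direct computation and necessity by a structural case analysis organized according to $k \ce \dim[\mg,\mg]$, the dimension of the commutator. A preliminary observation is that $k\in\{1,2,3\}$: since $[\mg,\mg]\subseteq Z(\mg)$ and the bracket factors through a skew map $\Lambda^2(\mg/Z(\mg))\to[\mg,\mg]$, a dimension count in dimension $7$ forbids $k\geq 4$, and $k=0$ is the excluded abelian case.

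For the sufficiency direction, I would take each of the three families of structure equations together with the fixed positive $3$-form $\varphi$ of \eqref{eq:gcvar} and verify three things: that the prescribed differentials satisfy $\dd^2=0$ (equivalently, that the bracket obeys the Jacobi identity); that $\varphi$ is coclosed, i.e. $\dd\star\varphi=0$; and that the curvature $R^\nabla$ of the characteristic connection $\nabla=\nabla^1$ solves \eqref{eq: instaton_equation_intro}. The last two are finite computations in $\Lambda^\bullet\mg^*$ using the explicit $\dd e^i$, and reduce to checking that the curvature $2$-forms lie in the subspace of $\Lambda^2\mg^*$ annihilated by wedging with $\star\varphi$.

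For the necessity direction, suppose $\nabla^\lambda$ is a $\rG_2$-instanton for some $\lambda$ and some left-invariant coclosed $\varphi$. The first reduction is to apply Lemma \ref{lm: instanton_condition_dim1} and Proposition \ref{pro:caracg2} to force $\lambda=1$, so that only the characteristic connection need be considered. I would then split on $k$. When $k=1$, the earlier analysis forces $\varphi$ to be purely coclosed; expressing the single nonzero differential $\dd e^7$ as a $2$-form on the orthogonal complement of the center and imposing the instanton equation together with pure coclosedness pins it to the shape of case (1), after a change of $\varphi$-adapted basis. When $k=2$, I would invoke the fact that an irreducible $2$-step nilpotent Lie algebra with $2$-dimensional commutator carries no coclosed $\rG_2$-structure \cite{BFF18,dBMR}, so such $\mg$ is reducible, and the instanton constraints then show this case produces no structure beyond those already captured. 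When $k=3$, the earlier results give that $\varphi$ is not purely coclosed, and one must analyse the instanton system imposed on the three independent differentials $\dd e^5,\dd e^6,\dd e^7$, using the Jacobi identity and coclosedness to reduce the quadratic curvature conditions to the two normal forms (2) and (3).

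The main obstacle is the $k=3$ analysis. Because the curvature of the characteristic connection is quadratic in the structure constants, the instanton equation becomes a system of quadratic relations among the entries of the three differentials, coupled to the linear Jacobi and coclosedness constraints. The delicate part is to show that, after using the residual freedom of $\rG_2$-adapted basis changes to normalise, the full solution set collapses to exactly the two families (2) and (3), with the single scaling parameter $\nu$ and no additional hidden continuous moduli; ruling out spurious intermediate configurations is where the bulk of the work lies.
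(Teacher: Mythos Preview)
Your overall skeleton is right, but you are significantly underusing Proposition~\ref{pro:caracg2}, and this creates a real gap in your $k=2$ step and an unnecessary obstacle in your $k=3$ step.

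First, Proposition~\ref{pro:caracg2} does far more than force $\lambda=1$. Its conclusion when $\dim\mg'\geq 2$ is that in fact $\dim\mg'=3$, that $\varphi$ calibrates $\mg'$, that there is an adapted basis with $(\dd e^{i+4})^+=\tfrac{\mu}{3}\sigma_i^+$ for some $\mu\neq 0$, and that the maps $j(z)$ satisfy the bracket relation \eqref{eq:jcondteo}. The paper's proof of the classification simply invokes all of this. In particular the case $k=2$ is eliminated outright: your detour through ``irreducible with $2$-dimensional commutator has no coclosed structure, so $\mg$ is reducible, and then the instanton constraints rule it out'' is both unnecessary and, as written, incomplete---reducibility alone says nothing, and you give no mechanism for the last clause.

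Second, for $k=3$ the paper does \emph{not} solve the quadratic curvature system by brute force. Once Proposition~\ref{pro:caracg2} has fixed the self-dual parts, the dichotomy is clean: either every $(\dd e^i)^-$ vanishes, which is exactly case~(2) with $\nu=\mu/3$; or some anti-self-dual part survives, and then Corollary~\ref{cor:comker} (a consequence of the bracket relation \eqref{eq:jcondteo} via a short $\Sp(1)$ argument) produces a nonzero $x\in\mr$ with $j(z)x=0$ for all $z\in\mg'$. Using the $\SO(4)$ stabiliser of the calibrated $\mg'$ to send $x$ to $e_1$, the conditions $e_1\lrcorner\dd e^i=0$ together with the already normalised self-dual parts force the structure equations of case~(3). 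This common-kernel lemma is precisely the missing idea that dissolves what you identify as ``the main obstacle''; without it your proposed quadratic analysis is workable in principle but much heavier, and you have not indicated how you would actually carry it out.
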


We should point out that the $\rG_2$-instantons in (1) are precisely the ones presented by Fernández et.~al.~ \cite{FIUV11}. Besides, Agricola et.~al.~show that the metric Lie groups in (2) are naturally reductive and their canonical connection as such coincides with the characteristic connection of the $\rG_2$-structure given in the theorem \cite{AFS15}. They also prove that the holonomy of this connection is $\su(2)$, so putting these facts altogether one could deduce that they are $\rG_2$-instantons, even if it is not explicitly mentioned there. As far as we are aware, the instantons in (3) are new in the literature. Indeed, the latter cannot be obtained from the construction given by Ivanov and Ivanov \cite{IvIv05} since they produce instantons for non-coclosed $\rG_2$-structures.  Apart from the construction of these examples, and linking them together as part of the same theory,  we believe that the classification obtained as the direct part of this theorem is the most relevant contribution of this paper. 

As a consequence of this result, we are able to show that the relation with natural reductivity of case (2) is actually common to all the cases, thus giving an extra geometrical feature of the connection defining $\rG_2$-instantons.
\begin{teo}
Let $G$ be a simply connected $2$-step nilpotent Lie group of dimension $7$, let $\varphi$ be a left-invariant coclosed $\rG_2$-structure and denote by $g$ and $R$, respectively, the left-invariant metric induced by $\varphi$ and its Riemann curvature tensor. If the characteristic connection $\nabla$ defined by $\varphi$ is a $\rG_2$-instanton, then $\nabla T=\nabla R=0$ and therefore $(G,g)$ is naturally reductive and $\nabla$ is its canonical connection.
\end{teo}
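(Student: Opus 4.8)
The plan is to leverage the explicit classification furnished by the preceding theorem: since we now know that the only Lie algebras and coclosed $\rG_2$-structures admitting an instanton characteristic connection are the three families (1)--(3), it suffices to verify the conclusions $\nabla T=0$ and $\nabla R=0$ on each of these three cases. I would first record the general fact that for a metric connection $\nabla$ with skew-symmetric torsion $T$, the condition $\nabla T=0$ is equivalent to $\dd T$ being controlled by $T$ in a precise algebraic way, and recall that on a naturally reductive space the canonical connection satisfies both $\nabla T=0$ and $\nabla R=0$ by definition. Thus the real content is to show $\nabla T=0$ directly and then invoke a standard characterization (e.g. the Ambrose--Singer theorem, or the algebraic criterion that a left-invariant metric connection with $\nabla T = 0$ and $\nabla R = 0$ is the canonical connection of a naturally reductive structure) to conclude.

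The key computational step is to compute the torsion $T$ of the characteristic connection explicitly in each case. Since $\nabla$ is the characteristic connection of $\varphi$, its torsion is the skew-symmetric $3$-form determined by $\varphi$ and its exterior derivatives; on a Lie group everything is left-invariant, so $T$ is a fixed element of $\Lambda^3\mg^*$ expressible through the structure constants listed in (1)--(3) and the fixed form $\varphi$ from \eqref{eq:gcvar}. I would compute $T$ as a left-invariant $3$-form in each case, and then compute $\nabla_X T$ for $X$ ranging over the left-invariant frame $e_1,\dots,e_7$. Because $\nabla = \nabla^g + \tfrac12 T$ (the characteristic connection differs from Levi-Civita by half the torsion), and because all tensors involved are left-invariant, $\nabla_X T$ reduces to a purely algebraic expression in the structure constants; I expect a short case-by-case verification that this vanishes identically.

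Once $\nabla T=0$ is established, I would obtain $\nabla R=0$ and natural reductivity. The cleanest route is to use that a left-invariant metric connection $\nabla$ on a Lie group with skew torsion satisfying $\nabla T=0$ and $\nabla R^\nabla=0$ defines a naturally reductive homogeneous structure in the sense of Tricerri--Vanhecke, with $\nabla$ its canonical connection; here $\nabla R^{\nabla} = 0$ for a left-invariant connection follows once one knows the full parallelism of the torsion together with the instanton/holonomy-reduction data, and $\nabla R = 0$ for the Riemann tensor of $g$ then follows from the standard identity relating $R$, $R^\nabla$, $T$ and $\nabla T$. I expect the main obstacle to be the bookkeeping in case (2), where $\varphi$ is not purely coclosed and the torsion $T$ has the richer structure reflecting the $\su(2)$-holonomy observed by Agricola et.~al.~\cite{AFS15}; verifying $\nabla T=0$ there requires carefully tracking the mixed terms coming from the nontrivial differentials $\dd e^5,\dd e^6,\dd e^7$. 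Cases (1) and (3), being purely coclosed or nearly so, should yield more transparent computations, and I would present the three verifications in parallel to highlight the common naturally reductive structure underlying all the instanton examples.
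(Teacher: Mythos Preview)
Your outline for $\nabla T=0$ via direct case-by-case computation would work, though the paper takes a shortcut: it first notes (from the explicit curvature formulae in the examples) that $R^1(x,y,z,w)=R^1(z,w,x,y)$, which by a result of Ivanov forces $\nabla^1 T$ to be a $4$-form; then, since $\nabla^1_u=0$ for all $u\in\mr$ and $T$ lies in $(\Lambda^2\mr^*\otimes(\mg')^*)\oplus\Lambda^3(\mg')^*$, this $4$-form vanishes automatically. Either route is fine.

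The genuine gap is in your treatment of $\nabla R=0$. You write that ``$\nabla R^\nabla=0$ for a left-invariant connection follows once one knows the full parallelism of the torsion together with the instanton/holonomy-reduction data''---but this is not a theorem. Parallel skew torsion does not by itself force $\nabla R^\nabla=0$, and the instanton condition (curvature in $\Lambda^2_{14}\otimes\mg_2$) gives no additional differential information about $\nabla R^\nabla$. You would need either another explicit case-by-case computation (feasible, since the curvature components are written out in the examples) or a structural argument. The paper supplies the latter: since $\nabla^1_u=0$ for $u\in\mr$, one only needs $\nabla^1_z R=0$ for $z\in\mg'$, and this is obtained by showing that the endomorphism $D:=\nabla^1_z\in\so(\mg)$ is a \emph{derivation of the Lie algebra} $\mg$ (using condition \eqref{eq:jcondteo} when $\dim\mg'=3$). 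Then $e^{tD}$ is an automorphism of $\mg$ and an isometry, hence an isometry of $(G,g)$, so it preserves the Riemann tensor $R$; differentiating at $t=0$ gives $\nabla^1_z R=0$. This derivation property is the key idea you are missing.

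A minor correction: your remark that ``Cases (1) and (3), being purely coclosed or nearly so'' is wrong. Only case~(1) is purely coclosed; cases~(2) and~(3) have $\dim\mg'=3$ and, by Corollary~\ref{cor:parallel}, are \emph{not} purely coclosed ($\tau_0=\tfrac{4}{7}\mu\neq 0$).
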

We note that the converse of this theorem does not hold. Indeed, the Heisenberg Lie group of dimension 3 times a 4-dimensional abelian factor is naturally reductive but the characteristic connection of no left-invariant coclosed $\rG_2$-structure is an instanton, since it does not appear in the classification.

The Lie groups admitting instantons  listed above  possess bases with rational coefficients  and therefore  they admit lattices as well \cite{MAL}. In addition, since the $\rG_2$-structure and its canonical connection are left-invariant, they descend to the quotient by this latttice, thus leading to $\rG_2$-instantons on compact (nil)manifolds. 

Now, we give a brief account of the contents of this paper. Section \ref{sec:prelim} is aimed at fixing notation and gives the preliminaries for the rest of the paper. In Section \ref{sec:cc1}, we study the case when $G$ has commutator of dimension 1. For such Lie groups, we exploit the fact that the $\rG_2$-structure induces an $\SU(3)$-structure on the orthogonal complement of the commutator. This allows us to guarantee the existence of a basis adapted to the coclosed $\rG_2$-structure {\em and} to the Lie algebra structure (see Lemma \ref{lm:can1}). Using this basis we show that a necessary and sufficient condition for $\nabla^\lambda$ to be a $\rG_2$-instanton is that $\lambda=1$ and $\varphi$ is purely coclosed. 

Section \ref{sec:cc2} deals with the case of commutator of dimension $\geq 2$. It is worth noting that since $\dim G=7$, its commutator is of dimension at most $3$. The focus of the first part of this section is to show that, if the commutator is of dimension 3 and $\nabla^\lambda$ is an instanton for a coclosed $\rG_2$-structure $\varphi$, then $\varphi$ calibrates the commutator. This is a key step for the second part of the section, where we show that when a Lie algebra admits a coclosed $\rG_2$-structure for which $\nabla^\lambda$ is an instanton, then there is a basis adapted {\em to both} the $\rG_2$-structure and the Lie algebra structure. We finish the section with a characterization of Lie algebras, $\rG_2$-structures and parameters $\lambda$ for which $\nabla^\lambda$ is an instanton in Proposition \ref{pro:caracg2}; in particular, we get $\lambda=1$ as mentioned before. The proof of this proposition requires the computation of the curvature components of $\nabla^\lambda$, which are involved and require extra notations  so we placed them in Section \ref{sec:appadapt} to ease the reading of the article. Finally, in Section \ref{sec:class} we include the classification result mentioned above, with the full description of the examples appearing in the theorem. Moreover, the relation with the naturally reductive property is explained. 

\smallskip 

\noindent {\bf Acknowledgements:} This research was supported by MATHAMSUD Regional Program 21-MATH-06. AM was funded by the São Paulo Research Foundation (Fapesp) [2021/08026-5]. AC would like to acknowledge the hospitality of IMECC-Unicamp during his visit to Campinas. He is partially supported by the grants {BRIDGES ANR--FAPESP ANR-21-CE40-0017} and {Projeto CAPES - PrInt UFRJ 88887.311615/2018-00}.  
 VdB is funded by Fapesp grant [2021/09197-8]. The authors are  grateful to Udhav Fowdar for useful discussions on the topic.
 
\section{ \texorpdfstring{$\rG_2$}{G2}-Structures on 2-Step Nilpotent Lie Groups}\label{sec:prelim}

\subsection{Preliminaries on \texorpdfstring{$\rG_2$}{G2}-Structures}

A $\rG_2$-structure on a $7$-manifold $M$ is given by a differential $3$-form $\varphi\in\Omega^3(M)$ that is at each point isomorphic to the $3$-form 
\[
\varphi_0=e^{127}+e^{347}+e^{567}+e^{135}-e^{146}-e^{236}-e^{245}, 
\]
on $\mathbb{R}^7$, where $\{e^1, \ldots, e^7\}$ is the dual to the standard basis of $\R^7$. 
The $\rG_2$-structure $\varphi$ 
canonically determines a Riemannian metric $g=g_\varphi$ and a volume form $\vol_\varphi$ on $M$ by 
\begin{equation}\label{eq:xyvol}
	(X\lrcorner\varphi)\wedge(Y\lrcorner\varphi)\wedge\varphi = 6g_\varphi(X,Y){\vol_\varphi}, \quad  X,Y\in\mathfrak X(M),	
\end{equation}
and hence the Levi-Civita connection $\nabla^g$ of $g$ and Hodge star operator $\star$ on $M$. We denote by $\psi=\star\varphi$, the Hodge dual $4$-form  of $\varphi$.
A $\rG_2$-structure $\varphi$ induces decompositions of the bundles $\Lambda^k$ into sums of subbundles, with factors corresponding to irreducible representations of $\rG_2$. For example, 
\begin{equation}\label{eq: decomposition_of_forms}
	\Lambda^2=\Lambda^2_7\oplus\Lambda^2_{14} \qquad \Lambda^3=\Lambda^3_1\oplus\Lambda^3_7\oplus\Lambda^2_{27},
\end{equation}
where $\Lambda^k_l$ has rank $l$ and each submodule can be characterized  algebrically using the forms $\varphi$ and $\psi$. For instance, 
\begin{equation}\label{eq: definition_Omega}
\Lambda^2_{14} =\{\alpha\in \Lambda^2: \alpha\wedge\varphi=-\star\alpha\}=\{\alpha\in \Lambda^2: \alpha\wedge\psi=0\}\simeq \mg_2.
\end{equation}

The decompositions of $\Lambda^4$ and $\Lambda^5$ are similar.
Hence,
the exterior derivatives of $\varphi$ and $\psi$ can be written in terms of the \emph{torsion forms} $\tau_j\in \Omega^j=\Gamma(\Lambda^j)$ (for $j=0,1,2,3$) which are defined by (see \cite{Bryant2006})
\begin{equation}\label{eq: exterior_derivative_varphi}
	\dd \varphi = \tau_0\psi+3\tau_1\wedge \varphi+\star\tau_3 \qandq \dd \psi=4\tau_1\wedge\psi+\tau_2\wedge\varphi.
\end{equation}
Explicitly, the torsion forms can be written in terms of $\varphi$ and $\psi$: 
\begin{align}\label{eq: torsion_forms}
	\tau_0=&\frac{1}{7}\star(\varphi\wedge \dd\varphi), \quad \tau_1=\frac{1}{12}\star(\varphi\wedge\star\dd\varphi)=\frac{1}{12}\star(\psi\wedge\star\dd\psi),\\ \nonumber
	\tau_2=& -\star\dd\psi+4\star(\tau_1\wedge\psi), \quad \tau_3=\star\dd\varphi-\tau_0\varphi-3\star(\tau_1\wedge\varphi).
\end{align}
$\rG_2$-structures can be classified into $16$-classes according to which of the four torsion components are identically zero.
For example, $\varphi$ is said to be \emph{torsion free} if $\tau_j=0$ for all $j$. By a well-known theorem of Fernandez and Gray \cite{Fernandez1982}, this is equivalent to $\nabla^{g}\varphi=0$, and hence to a reduction of the Riemannian holonomy group. The $\rG_2$-structure $\varphi$ is {\em coclosed} if $\tau_1=\tau_2=0$, and is {\em purely coclosed} if additionally $\tau_0=0$. We note that this is equivalent to $\varphi\wedge \d\varphi=0=\d\star\varphi$.

The conditions of coclosedness and pure coclosedness are of particular interest for several reasons. Firstly, Friedrich and Ivanov \cite{Friedrich2001} show that if $\tau_2=0$, there exists a unique metric connection $\nabla$ on the tangent bundle for which the torsion $T(X,Y,Z)=g(X,T_\nabla(Y,Z))$ is totally skew-symmetric and for which $\nabla\varphi=0$. In this case, the torsion $3$-form is given by 
\begin{equation}\label{eq:torcontau1}
	T=\frac16 \star( \d\varphi\wedge\varphi)\varphi-\star \d\varphi+4\star(\tau_1\wedge\varphi).
\end{equation}
Secondly, coclosed $\rG_2$-structures are those that are required for solutions to the heterotic equations of motion in $7$-dimensions. This system of equations, coming from string theory, asks for a spinor field to be parallel with respect to a metric connection with skew-torsion. A \emph{dilaton condition} corresponds in the $7$-dimensional case to the torsion term $\tau_1$ being an exact $1$-form $\tau_1=\d f$. The function $f$ being constant therefore results in the induced $\rG_2$-structure $\varphi$ being coclosed. We note that the conditions $\tau_0=0$, and $\tau_0$ equal to a non-zero constant, correspond in the physics literature to different compactification ans\"atze (see \cite{FrIv03,OssaLaSv15}). 

Let $(M,\varphi)$ be a $7$-manifold equipped with a $\rG_2$-structure and $(E,\nabla)$ a vector bundle with connection on $M$. The condition for  $\nabla$ to be a $\rG_2$-instanton \eqref{eq: instaton_equation_intro} is equivalent to its curvature $R^\nabla$ to take values in the subbundle $\Lambda^2_{14}\otimes\mathrm{End}(E)$. A natural example of a $\rG_2$-instanton is the Levi-Civita connection of a torsion-free $\rG_2$-structure. The condition $\nabla^g\varphi=0$ gives a holonomy reduction for $\nabla^g$, and implies that $R^{\nabla^g}$ takes values in $\Lambda^2\otimes\mathfrak{g}_2\subseteq \Lambda^2\otimes\mathfrak{so}(TM)$, while the symmetries of the curvature tensor of a torsion-free connection imply that it lies in $\Lambda^2_{14}\otimes\mathfrak{so}(TM)$. We note that if $\nabla$ is a connection  on $TM$ with non-vanishing torsion for which $\nabla\varphi=0$, it is not necessarily the case that $\nabla$ satisfies the instanton condition. 

With this in mind, we consider the 1-parameter family  of connections $\nabla^\lambda$ on $TM$, $\lambda\in\R$, given by 
\begin{eqnarray}
\label{eq:np}
	\lela \nabla_X^\lambda Y,Z\rira =\lela \nabla_X^gY,Z\rira +\frac{\lambda}{2} \,T(X,Y,Z), \qquad X,Y,Z\in\mathfrak X(M),
\end{eqnarray}
for $T$ as in \eqref{eq:torcontau1}, where $M$ is a $7$-dimensional $2$-step nilpontent Lie group, equipped with a coclosed $\rG_2$-structure $\varphi$, and ask for which values of $\lambda$ is $\nabla^\lambda$ a $\rG_2$-instanton. We note immediately that $\nabla^\lambda$ is compatible with the metric $g=g_\varphi$, that the torsion of $\nabla^\lambda$ is totally skew-symmetric, and equal to $\lambda T$, however $\varphi$ is not parallel with respect to $\nabla^\lambda$, for $\lambda\neq1$.

\subsection{Geometry of Riemannian 2-step Nilpotent Lie Groups}\label{sec:prelim2st}

Let $G$ be a connected $7$-dimensional (non-abelian) 2-step nilpotent Lie group endowed with a left-invariant $\rG_2$-structure. That is, a non-degenerate 3-form $\varphi\in\Omega^3(G)$ that is invariant under left-translations of $G$. Note that the Riemannian metric $g$ and the volume form induced by $\varphi$ on $G$ by the formula \eqref{eq:xyvol} are also left-invariant. Denote by $\mathfrak{g}$ the Lie algebra of $G$. We can thus see $\varphi$ as an element  of $\Lambda^3\mathfrak{g}^*$, $g$ as an inner product on $\mg$, etc...

The $\rG_2$-structure $\varphi$ is closed or coclosed  when itself or its Hodge dual $\psi$, respectively,  are closed with respect to the Chevalley-Eilenberg Lie algebra differential $\d:\Lambda^\bullet\mg^*\to\Lambda^\bullet \mg^*$.

Left-invariant coclosed $\rG_2$-structures on 2-step nilpotent Lie algebras where studied in \cite{BFF18}. It is known that any such a Lie algebra admits coclosed $\rG_2$-structure, except when $\mg$ is irreducible and has 2-dimensional commutator. In addition, purely coclosed $\rG_2$-structures appear on any Lie algebra admitting coclosed ones, except when $\mg$ is the direct sum of the 3-dimensional Heisenberg Lie algebra with $\R^4$ \cite{dBMR}.

From now on we assume that $\varphi$ is coclosed. We shall introduce some basic facts about the geometry of the underlying Riemannian Lie group $(G,g=g_\varphi) $ by means of algebraic features of its metric Lie algebra $(\mg,g)$ \cite{EB}.

On the Lie algebra $\mg$, the commutator $\mg'$ and the center $\mz$  are defined by
\begin{equation*}
\mg'=\langle\{[x,y]:x,y\in\mg\}\rangle, \qquad \mz=\{z\in\mg:[z,x]=0\mbox{ for all }x\in\mg\},
\end{equation*}where  $\langle Y\rangle$ denotes the subspace spanned by $Y$. The  2-step nilpotent condition on $\mg$ is equivalent to $0\neq \mg'\subset \mz$. We denote by $\ma$ the (possibly trivial) abelian factor of $\mg$, given by
\begin{equation}\label{eq:ma}
\ma:=\mz\cap (\mg')^\bot.
\end{equation} 

Let $\mc$  be a fixed subspace verifying $\mg'\subset\mc\subset\mz$ which is, in fact, an ideal of $\mg$. Let $\mq:=\mc^\perp\subseteq \mr:=(\mg')^\perp$. 
Define $j:\mc\to\mathfrak{so}(\mq)$ by, for
$x,y\in\mq$ and $z\in\mc$, 
\begin{equation}\label{eq:jc}
\lela j(z)x,y\rira=\lela z,[x,y]\rira. 
\end{equation} 
We see that $\ker j=(\mg')^\bot\cap\mc$ and thus the restriction $j|_{\mg'}$ is injective. In particular, if $\mc=\mz$,  $\ker \,(j:\mz\to\so(\mz^\bot))=\ma$ because of \eqref{eq:ma}. 
Even though, strictly speaking, different subspaces $\mc$ define different maps $j$, they are all related. To picture this, consider the maps $j_\mc:\mc\to\so(\mq)$ and $j_{\mg'}:\mg'\to\so(\mr)$ defined by \eqref{eq:jc} for $\mc$ and $\mg'$, respectively. For any $z\in\mg'$, $j_{\mc}(z)\in\so(\mq)$ whilst  $j_{\mg'}(z)\in\so(\mq\oplus(\mr\cap \mc))$. However, it is easy to check that $j_{\mg'}(z)$ is the extension  of $j_{\mc}(z)$ by zero on $\mr\cap \mc$. 
Due to this close relationship between the maps $j_\mc$ and $j_{\mg'}$ and in order to avoid heavy notations, we will denote all possible maps (for all possible choices of $\mc$) by $j$, and we will just specify their domain and/or target when necessary.

The structure coefficients of a Lie algebra are determined by the differential map $\d:\mg^*\to\Lambda^2\mg^*$.
Since $\mg$ is 2-step nilpotent, the image of $\d:\mg^*\to\Lambda^2\mg^*$ is contained in $\Lambda^2\mr^*$. Given $z\in\mg'$, let $z^\flat\in\mg^*$ denote the linear map $z^\flat(\cdot)=\lela z,\cdot\rira$.
Under the identification $\Lambda^2\mr^*$ with $\so(\mr)$ via the metric $g$, the differential $\d z^\flat\in\Lambda^2\mr^*$ corresponds to $-j(z)\in\so(\mr)$; this follows immediatly from Cartan's formula and \eqref{eq:jc} for $\mc=\mg'$.

The Levi-Civita connection $\nabla^g$ of $(G,g)$ can be viewed as a map $\nabla^g:\mg\to\so(\mg)$, such that  $x\in \mg$ is taken to $\nabla^g_x\in\so(\mg)$ which, by Koszul formula, verifies
\begin{equation}\label{eq:Koszul}
\lela \nabla^g_xy,z\rira =\frac12\left(\lela [x,y],z\rira+\lela [z,x],y\rira+\lela [z,y],x\rira\right),  \quad \forall x,y,z\in\mg.
\end{equation}
More specifically,
\begin{equation}\label{eq:nabla}\left\{
\begin{array}{ll}
\nabla^g_u v=\frac12 \,[u,v] & \mbox{ if } u,v\in\mr,\\
\nabla^g_u z=\nabla^g_zu=-\frac12 j(z)u & \mbox{ if } u\in\mr,\,z\in\mg',\\
\nabla^g_z z'=0& \mbox{ if } z, z'\in\mg'.
\end{array}\right.
\end{equation}

For every $x\in\mg$, $\nabla^g_x$ can be seen as a skew-symmetric endomorphism of $\mg$; actually, from  \eqref{eq:nabla} we have
\begin{equation}
\nabla_u^g=\frac12 (\ad_u-\ad_u^*),\qquad \nabla_z^g=-\frac12 j(z)\qquad \forall u\in \mr,\,z\in \mg',
\end{equation} where here $\ad_u^*$ is the metric adjoint of $\ad_u$ and  $j(z)$ denotes the extension to $\mg$, by zero on $\mg'$, of the map $j(z)\in\so(\mr)$ defined in \eqref{eq:jc}. In fact, as any metric connection, $\nabla^g$ can be represented as an $\so(n)$-valued 1-form $\Psi_0$ on $\mg$. Namely, given an orthonormal basis $\{e_i\}_{i=1}^7$ of $\mg$ and its dual basis $\{e^i\}_{i=1}^7$, one has
\begin{equation}\label{eq:psiLC}
\Psi_0=\sum_{i=1}^7 e^i\otimes\nabla_{e_i}^g.
\end{equation}

The coclosed $\rG_2$-structure $\varphi$ defines a 1-parameter family of metric connections $\nabla^\lambda$ via the formula \eqref{eq:np}. 
Notice that  $\nabla^0=\nabla^g$  is not an instanton since otherwise  $(G,g)$ would have holonomy contained in $\rG_2$ and thus it would be Ricci flat  \cite{Bon66}, contradicting the fact that $\mg$ is not abelian \cite[Theorem 2.4]{Mil76}. Consequently, when studying the instanton condition for $\nabla^\lambda$, we will assume $\lambda\neq 0$.

The connection 1-form corresponding to $\nabla^\lambda$ in \eqref{eq:np} is
\begin{align}\label{eq:psilam}
	\Psi_\lambda=\sum_{i=1}^7 e^i\otimes\nabla_{e_i}^\lambda=\Psi_0+\frac{\lambda}{2}T,
\end{align}  where $T$ in \eqref{eq:torcontau1} becomes
\begin{equation}\label{eq:tor}
T=\frac16 \star( \d\varphi\wedge\varphi)\varphi-\star \d\varphi,
\end{equation} due to closedness of $\psi$, 
and the corresponding $\so(n)$-valued curvature 2-form reads
\begin{align*}
	R^\lambda=\dd \Psi_\lambda+\Psi_\lambda\wedge\Psi_\lambda.
\end{align*}
Explicitly, given $x,y\in \mg$, we have the endomorphism 
\begin{align}\label{eq:Rlam}
	R^\lambda_{x,y}=&\dd \Psi_\lambda(x,y)+(\Psi_\lambda\wedge\Psi_\lambda )(x,y)=-\Psi_\lambda([x,y])+\Psi_\lambda(x)\Psi_\lambda(y)-\Psi_\lambda(y)\Psi_\lambda(x)\\
	=&-\nabla^\lambda_{[x,y]}+\nabla^\lambda_x\nabla^\lambda_y-\nabla^\lambda_y\nabla^\lambda_x.\nonumber         \end{align}

Alternately, for $\alpha,\beta\in\{1, \ldots, 7\}$, the entry of the curvature  $(R^\lambda)^\beta_\alpha\in \Lambda^2\mg^*$ can be computed in the basis $\{e_i\}_{i=1}^7$ as follows: 
\begin{equation}\label{eq:Rlamab}
(R^\lambda)^\alpha_\beta=\sum_{1\leq j<k\leq 7}\lela R^\lambda_{e_j,e_k}e_\beta,e_\alpha\rira  e^j\wedge e^k
\end{equation}
and $R^\lambda$ satisfies the $\rG_2$-instanton condition \eqref{eq: instaton_equation_intro} if and only if $(R^\lambda)^\alpha_\beta\wedge\psi=0$, for all $\alpha,\beta\in\{1, \ldots, 7\}$.

\section{The Instanton Condition for 2-step Nilpotent Lie Algebras \texorpdfstring{$\mg$}{g} Satisfying \texorpdfstring{$\dim\mg'=1$}{dim g'=1}.}\label{sec:cc1}

In this section $\mg$ is a $2$-step nilpotent Lie algebra with $1$-dimensional commutator $\mg'$. Let $\varphi$ be a $\rG_2$-structure on $\mg$ and consider the orthogonal splitting $\mg=\mr\oplus\mg'$ with respect to $g=g_\varphi$, where $\mr=(\mg')^\perp$. Given a unit vector $z\in \mg'$, the structure equations of $\mg$ are completely determined by $A=:-j(z)\in \so(\mr)$ satisfying \eqref{eq:jc} for $\mc=\mg'$. Then the 6-dimensional vector space $\mr$ is endowed with the flat $\SU(3)$-structure $(h,\omega,\rho_\pm)$ given by
\begin{equation}\label{eq:phiomrho}
	\varphi=\omega\wedge z^\flat+\rho_+, \quad \psi=\frac{\omega^2}{2}+\rho_-\wedge z^\flat \quad \text{and} \quad g=h+z^\flat\otimes z^\flat.
\end{equation}
The $\rG_2$-structure $\varphi$ is coclosed if and only if $A\in \mathfrak{u}(3):=\{B\in \so(\mr): BJ=JB\}$, where $J$ is defined by $\omega(\cdot,\cdot)=h(J\cdot,\cdot)$. 

\begin{lm}\label{lm:can1}
	For every coclosed $\rG_2$-structure $\varphi$ on $\mg$ there exists an orthonormal basis $\{e_1,\ldots,e_7\}$ of $\mg$, such that:
	\begin{equation}\label{eq: G2_struct_dim_1}
		\varphi=e^{127}+e^{347}+e^{567}+e^{135}-e^{146}-e^{236}-e^{245}
	\end{equation}
	and
	\begin{equation}\label{eq: structure_equation_dim_1}
		\d e^i=0, \quad i=1, \ldots, 6, \quad \d e^7=ae^{12}+be^{34}+ce^{56},
	\end{equation}for some $a,b,c\in\R$,  not all simultaneously zero.
\end{lm}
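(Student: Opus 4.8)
The plan is to reduce the statement to a normal-form result for elements of $\mathfrak{u}(3)$, exploiting the $\SU(3)$-structure on $\mr=(\mg')^\perp$ recorded in \eqref{eq:phiomrho}. Fix a unit vector $z\in\mg'$ and set $A=-j(z)\in\so(\mr)$, so that $\d z^\flat$ corresponds to $A$ under the identification $\Lambda^2\mr^*\cong\so(\mr)$; coclosedness of $\varphi$ means exactly $A\in\mathfrak{u}(3)$, i.e. $AJ=JA$. Viewing $(\mr,J,h)$ as a Hermitian $\C$-vector space of complex dimension $3$, the condition $A\in\mathfrak{u}(3)$ says that $A$ is a $\C$-linear, skew-Hermitian endomorphism.

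The first step is to block-diagonalize $A$ by a change of unitary frame. By the spectral theorem for skew-Hermitian operators there is an $h$-unitary basis $\{f_1,f_2,f_3\}$ of $(\mr,J)$ with $Af_k=i\mu_k f_k$, $\mu_k\in\R$. Since $j|_{\mg'}$ is injective and $z\neq 0$ we have $A\neq 0$, so the $\mu_k$ are not all zero. Passing to the associated real orthonormal basis $e_1=f_1,\ e_2=Jf_1,\ \dots,\ e_6=Jf_3$ of $\mr$, the operator $A$ becomes block-diagonal with $2\times 2$ rotation blocks of angular speeds $\mu_1,\mu_2,\mu_3$; under the dictionary between $2$-forms and skew-symmetric endomorphisms this is precisely $\d e^7=\mu_1 e^{12}+\mu_2 e^{34}+\mu_3 e^{56}$, with $(a,b,c)=(\mu_1,\mu_2,\mu_3)$ not all zero.

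The main point to check is that this can be arranged while keeping the full $\rG_2$-structure standard, i.e. that the diagonalizing frame may be taken inside $\SU(3)$ rather than merely $\U(3)$. The frame $\{f_k\}$ is determined up to the maximal torus acting on the eigenlines (and more, should eigenvalues coincide); replacing $f_3$ by $e^{i\theta}f_3$ leaves $A$ diagonal but multiplies the determinant of the change-of-frame matrix by $e^{i\theta}$. Choosing $\theta$ so that this determinant equals $1$, we obtain a frame related to the original standard $\SU(3)$-frame by an element of $\SU(3)$. Such a frame preserves $(h,\omega,\rho_+)$, hence by \eqref{eq:phiomrho} (with $z^\flat=e^7$) puts $\varphi$ into the standard form \eqref{eq: G2_struct_dim_1}, while $A$ retains its block-diagonal shape. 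This $\SU(3)$-versus-$\U(3)$ reduction is the only delicate step; everything else is the spectral theorem and the form–endomorphism dictionary.

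Finally, $\d e^i=0$ for $i=1,\dots,6$ is immediate from $2$-step nilpotency: for any $\alpha\in\mr^*$, extended by zero on $\mg'$, one has $\d\alpha(x,y)=-\alpha([x,y])=0$ because $[x,y]\in\mg'$ and $\alpha$ vanishes on $\mg'$. Combining this with the previous paragraph yields the structure equations \eqref{eq: structure_equation_dim_1} in the desired basis, completing the proof.
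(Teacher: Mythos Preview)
Your proof is correct and follows essentially the same route as the paper: both arguments use the $\SU(3)$-structure on $\mr$ from \eqref{eq:phiomrho}, invoke the spectral theorem to diagonalize the skew-Hermitian operator $A\in\mathfrak{u}(3)$ by a unitary change of frame, and then correct from $\U(3)$ to $\SU(3)$ so that $\rho_+$ (and hence $\varphi$) stays in standard form. The only cosmetic difference is in this last correction: the paper multiplies the whole unitary $U$ by a cube root $v$ of $\overline{\det U}$, whereas you rephase a single eigenvector; both operations preserve the eigenbasis property of $A$ and land in $\SU(3)$.
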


\begin{proof}By definition of $\rG_2$-structures there exists an orthonormal basis $\{\tilde e^1, \ldots,\tilde e^7\}$ of $\mg^*$ such that, on this basis,  $\varphi$ has the form \eqref{eq: G2_struct_dim_1}.
	Using that $\rG_2$ acts transitively on the 6-sphere, we can assume that $\tilde e_7\in \mg'$. Let $(\omega,\rho_\pm)$ be the induced $\SU(3)$-structure on $\mr=\langle\{\tilde e_1,\ldots,\tilde e_6\}\rangle$.
	Since $\varphi$ is coclosed, the matrix $A:=-j(\tilde e_7)\in\so(\mr)$ commutes with $J$ \cite[Proposition 4.1]{dBMR}. In particular, there exists a unitary matrix $U\in \U(3)$ such that on the basis $\{U\tilde e_1,\ldots, U\tilde e_6\}$, $A$ and $J$ have real canonical form
	\begin{align}\label{eq: normal_form_of_A}
		A=\left(\begin{array}{cc}
			0 & -a \\
			a & 0 
		\end{array}\right)\oplus\left(\begin{array}{cc}
			0 & -b \\
			b & 0 
		\end{array}\right)\oplus\left(\begin{array}{cc}
			0 & -c \\
			c & 0 
		\end{array}\right), \qandq 	J=\left(\begin{array}{cc}
			0 & -1 \\
			1 & 0 
		\end{array}\right)^{\oplus 3}.
	\end{align}
	 Set $u:=\overline{\det U}$ and consider $v\in \C$ satisfying $v^3=u$, thus we have that $V:=v U\in \SU(3)$. Then,  $\{e_1:=V\tilde e_1,\ldots,e_6:=V\tilde e_6,e_7:=\tilde e_7\}$ is a basis where $\varphi$ has the form \eqref{eq: G2_struct_dim_1} and their differentials satisfy \eqref{eq: structure_equation_dim_1}. Since $\mg$ is not abelian, $a,b,c$ cannot be simultaneously zero.
\end{proof}
Using the basis given in the lemma, and combining \eqref{eq: structure_equation_dim_1} with \eqref{eq:nabla} and \eqref{eq:psiLC}, we get that the  $\so(7)$-valued 1-form  on $\mg$ of the Levi-Civita connection of $(\mg,g)$ is
\begin{equation}\label{eq: LC_connection_dim_1}
	\Psi_0=
	\frac{1}{2}\left(\begin{array}{ccc|c}
		& & & \\
		& e^7\otimes A & & -A\be \\
		& & &  \\ \hline 
		& \be^TA & & 0
	\end{array}\right),
\end{equation}
where $\be^T=(e^1,e^2,\ldots,e^6)$ and $\be^TA:=(ae^2,-ae^1,be^4,-be^3,ce^6,-ce^5)$. By \eqref{eq: G2_struct_dim_1} and \eqref{eq: structure_equation_dim_1}, we get
\begin{equation}\label{eq:dvarphi1}
	\d\varphi=(a+b)e^{1234}+(a+c)e^{1256}+(b+c)e^{3456},
\end{equation} and thus, setting $\mu=a+b+c$, we further obtain
\begin{equation}\label{eq:pccg1}
    \star(\d\varphi\wedge\varphi)=2\mu.
\end{equation} In particular, $\varphi$ is purely coclosed  (see \eqref{eq: torsion_forms}) if and only if $\mu=0$ (see also \cite[Proposition 4.1]{dBMR}).

Moreover, by \eqref{eq:dvarphi1}, the torsion $3$-form $T$ associated with the coclosed $\rG_2$-structure via \eqref{eq:tor} is 
\begin{eqnarray}\label{eq:Tdim1}
	T
 &=&\frac{\mu}{3}\left(e^{135}-e^{146}-e^{236}-e^{245}\right)\nonumber 
 +\frac{1}{3}\left((a-2b-2c)e^{12}-(2a-b+2c)e^{34}-(2a+2b-c)e^{56}\right)\wedge e^7.\nonumber 
\end{eqnarray}
We can write this $3$-form as the $\so(7)$-valued 1-form:
\begin{equation}\label{eq: Torsion_dim_1}
	T=
	\frac{\mu}{3}\left(\begin{array}{ccc|c}
		& & & \\
		&-2e^7\otimes J+M & & -2J\be \\
		& & & \\ \hline
		& 2\be^TJ & & 0
	\end{array}\right)+
	\left(\begin{array}{ccc|c}
		& & & \\
		& e^7\otimes A & & A\be \\ 
		& & & \\ \hline
		& -\be^TA & &  0
	\end{array}\right),
\end{equation}
where 
\begin{equation*}
	M=\left(\begin{array}{cc|cc|cc}
		0 & 0 & -e^5 & e^6 & e^3 & -e^4   \\
		0 & 0 & e^6 & e^5 & -e^4 & -e^3   \\ \hline
		e^5&-e^6 & 0 & 0 & -e^1 & e^2  \\
		-e^6& -e^5& 0 & 0 & e^2 & e^1   \\ \hline
		-e^3& e^4& e^1&-e^2 & 0 & 0 \\
		e^4 & e^3& -e^2&-e^1 & 0 &0 
	\end{array}\right).
\end{equation*}

Let $\nabla^\lambda$ be the metric connection defined by $\varphi$ via formula \eqref{eq:np}, namely $\nabla^\lambda=\nabla^g+\frac{\lambda}{2}T$, where $T$ is as in \eqref{eq:Tdim1}. Its connection 1-form is $\Psi_\lambda=\Psi_0+\frac{\lambda}{2}T$, as given in \eqref{eq:psilam}.

\begin{lm}\label{lm: instanton_condition_dim1}
	Let $\mg$ be a 2-step nilpotent Lie algebra with structure equation \eqref{eq: structure_equation_dim_1} and $\rG_2$-structure \eqref{eq: G2_struct_dim_1}. If the connection $\nabla^\lambda$  is a $\rG_2$-instanton then, $\lambda=1$ and $\mu=0$. 
\end{lm}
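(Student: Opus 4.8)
The plan is to compute the curvature $2$-form $R^\lambda=\d\Psi_\lambda+\Psi_\lambda\wedge\Psi_\lambda$ of the connection \eqref{eq:psilam} directly from its block-matrix presentation, and then to impose the instanton condition $(R^\lambda)^\alpha_\beta\wedge\psi=0$ on suitably chosen components until the constraints force $\lambda=1$ and $\mu=0$. Assembling $\Psi_\lambda=\Psi_0+\tfrac{\lambda}{2}T$ from \eqref{eq: LC_connection_dim_1} and \eqref{eq: Torsion_dim_1}, the connection form is block-structured with respect to the splitting $\mg=\mr\oplus\langle e_7\rangle$. The decisive algebraic simplification is that, in the normal form \eqref{eq: normal_form_of_A}, one has $A=DJ=JD$ with $D=\diag(a,a,b,b,c,c)$; hence $A$ and $J$ commute, $A^2=-D^2$, and the matrix products appearing in $\Psi_\lambda\wedge\Psi_\lambda$ reduce to expressions built from $A$, $J$, $M$ and $D$.

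First I would compute $\d\Psi_\lambda$. Since $\d e^i=0$ for $i\leq 6$ and $\d e^7=ae^{12}+be^{34}+ce^{56}$, this term only receives contributions from the factors containing $e^7$, producing $\Lambda^2$-entries proportional to $ae^{12}+be^{34}+ce^{56}$. I would then compute $\Psi_\lambda\wedge\Psi_\lambda$ block by block: the $\mr\times\mr$ block combines the wedge of the $e^7\otimes(\cdots)$ parts with the off-diagonal column and row parts (built from $A\be$ and $J\be$), while the mixed $\mr\times\langle e_7\rangle$ block combines the scalar blocks with the $e^7$-factors. Using the relations above, each entry of $R^\lambda$ becomes an explicit $\Lambda^2$-valued expression, polynomial of degree two in $\lambda$ and quadratic in $(a,b,c)$.

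Next I would test the instanton condition. Recalling that $\Lambda^2_{14}=\{\alpha\in\Lambda^2:\alpha\wedge\psi=0\}$ from \eqref{eq: definition_Omega}, with $\psi=\star\varphi$ read off from \eqref{eq: G2_struct_dim_1}, the requirement $(R^\lambda)^\alpha_\beta\wedge\psi=0$ amounts to the vanishing of the $\Lambda^2_7$-projection of each curvature entry; equivalently, pairing each entry against the seven generators $e_i\lrcorner\varphi$ of $\Lambda^2_7$ yields scalar equations in $a,b,c,\lambda$. I expect that a small number of well-chosen components---in particular the mixed entries carrying terms $e^{j7}$, which pair nontrivially with $\Lambda^2_7$---already produce relations of the schematic form $\lambda\mu\,q(a,b,c)=0$ and $(\lambda-1)\,p(a,b,c)=0$, with $p,q$ not identically vanishing on the structure constants. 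Since $\lambda\neq 0$ is already assumed and $a,b,c$ are not all zero because $\mg$ is non-abelian, the first relation forces $\mu=a+b+c=0$ and the second forces $\lambda=1$.

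The main obstacle is organizational rather than conceptual: correctly bookkeeping the several $\Lambda^2$-valued matrix products in $\Psi_\lambda\wedge\Psi_\lambda$ and isolating, among the many resulting scalar equations, the independent combinations strong enough to pin down both $\lambda$ and $\mu$ without over- or under-determining the system. A secondary subtlety is that the equations are quadratic in $\lambda$, so after discarding the excluded branch $\lambda=0$ one must verify that no other root of the quadratic factors is compatible with the remaining constraints when $a,b,c$ are not all zero, ensuring $\lambda=1$ is the unique admissible value.
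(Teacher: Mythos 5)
Your overall strategy coincides with the paper's: assemble $\Psi_\lambda=\Psi_0+\tfrac{\lambda}{2}T$ in block form, compute $R^\lambda=\d\Psi_\lambda+\Psi_\lambda\wedge\Psi_\lambda$ entry by entry using $AJ=JA$, and impose $(R^\lambda)^\alpha_\beta\wedge\psi=0$ on selected components. The gap lies in the final deduction. You predict that the mixed components carrying $e^{j7}$ terms yield decoupled relations of the schematic form $\lambda\mu\,q(a,b,c)=0$ and $(\lambda-1)\,p(a,b,c)=0$, from which $\mu=0$ and $\lambda=1$ would follow at once. That is not what the computation gives. After factoring out the branch $\lambda=-1$, the conditions $(R^\lambda)^k_7\wedge\psi=0$ for $k=1,3,5$ read
\begin{equation*}
2\mu^2\lambda-2\mu a\lambda+3a^2(\lambda-1)=0,\qquad 2\mu^2\lambda-2\mu b\lambda+3b^2(\lambda-1)=0,\qquad 2\mu^2\lambda-2\mu c\lambda+3c^2(\lambda-1)=0,
\end{equation*}
in which $\mu$ and $\lambda-1$ are inextricably mixed; summing them only yields $4\mu^2\lambda+3(a^2+b^2+c^2)(\lambda-1)=0$, which pins $\lambda$ to the interval $(0,1]$ (or to $-1$) but by itself does not force $\mu=0$ or $\lambda=1$. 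Closing the argument requires a second, independent family of components living entirely in the $\mr\times\mr$ block, namely $(R^\lambda)^1_2$, $(R^\lambda)^3_4$, $(R^\lambda)^5_6$ (whose entries involve $e^{12},e^{34},e^{56}$ rather than $e^{j7}$), whose instanton conditions sum to $-3(\lambda-1)^2(a^2+b^2+c^2)+2(2\lambda^2-\lambda-3)\mu^2=0$. Only by observing that $0<\lambda\leq1$ makes $2\lambda^2-\lambda-3$ strictly negative, so that both summands are nonpositive and must vanish separately, does one obtain $\lambda=1$ and $\mu=0$ (with $\lambda=-1$ excluded since $a,b,c$ are not all zero).

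So while your computational setup is sound and essentially identical to the paper's, the proposal as written would stall at the point where you expect the equations to factor: they do not, your chosen components are insufficient on their own, and the missing ingredient is the combination of the two families of scalar equations together with the sign analysis on the interval of admissible $\lambda$.
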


\begin{proof}	Assume that $\nabla^\lambda$ is a $\rG_2$-instanton \eqref{eq: instaton_equation_intro}. In terms of the curvature components \eqref{eq:Rlamab}, this is equivalent to $(R^\lambda)^\alpha_\beta\wedge\psi=0$ for all $\alpha,\beta=1, \ldots, 7$.
For $\alpha\neq 7$, using that $AJ=JA$, we have
\begin{eqnarray*}
		(R^\lambda)_\alpha^7&=&\sum_{j<k}\lela (\Psi_\lambda\wedge \Psi_\lambda)_{e_j,e_k}e_\alpha,e_7\rira e^j\wedge e^k\\
  &=&\sum_{j<k}\lela\left( (\Psi_\lambda)_{e_k}( \Psi_\lambda)_{e_j}-(\Psi_\lambda)_{e_j}( \Psi_\lambda)_{e_k}\right)e_7,e_\alpha\rira e^j\wedge e^k\\
 &=& \sum_{j,k}
 \lela (\lambda-1)(\Psi_\lambda)_{e_k}A\be_{e_j}-\frac{2\mu\lambda}{3}(\Psi_\lambda)_{e_k}J\be_{e_j},e_\alpha\rira e^j\wedge e^k\\
		&=& \sum_{j,k}\left(\frac{1}{2}(\lambda^2-1)e^7(e_k)(A^2\be_{e_j})_\alpha-\frac{\mu\lambda}{3}(\lambda+1)e^7(e_k)(AJ\be_{e_j})_\alpha-\frac{\mu\lambda}{3}(\lambda-1)e^7(e_k)(JA\be_{e_j})_\alpha\right. \\
&& \ \ \ \ \ \ \ 		\left.
-\frac{2\mu^2\lambda^2}{9}e^7(e_k)(\be_{e_j})_\alpha   
		+\frac{\mu\lambda}{6}\left((\lambda-1)(M_{e_k}A\be_{e_j})_\alpha-\frac{2\mu\lambda}{3}(M_{e_k}J\be_{e_j})_\alpha\right)\right)e^j\wedge e^k \\
 &=& \sum_{j<k}-\left(\frac{1}{4}(\lambda^2-1)((e^7\wedge A^2\be)_{e_j, e_k})_\alpha-\frac{\mu\lambda^2}{3}((e^7\wedge JA\be)_{e_j,e_k})_\alpha-\frac{\mu^2\lambda^2}{9}((e^7\wedge\be)_{e_j,e_k})_\alpha\right. \\
&& \ \ \ \ \ \ \ 		\left.   
		+\frac{\mu\lambda}{12}\left((\lambda-1)((M\wedge A\be)_{e_j,e_k})_\alpha-\frac{2\mu\lambda}{3}((M\wedge J\be)_{e_j,e_k})_\alpha\right)\right)e^j\wedge e^k\\
  &=& -\frac{1}{4}(\lambda^2-1)(e^7\wedge A^2\be)_\alpha+\frac{\mu\lambda^2}{3}(e^7\wedge AJ\be)_\alpha+\frac{\mu^2\lambda^2}{9}(e^7\wedge\be)_\alpha  \\
&& \ \ \ \ \ \ \ 		
		-\frac{\mu\lambda}{12}\left((\lambda-1)(M\wedge A\be)_\alpha-\frac{2\mu\lambda}{3}(M\wedge J\be)_\alpha\right),   
	\end{eqnarray*} 
		where $(M\wedge A\be)_\alpha$ and $(M\wedge J\be)_\alpha$ are the components of the vectors 
	\begin{align*}
		M\wedge A\be=&\left(\begin{array}{c}
			-(b+c)(e^{36}+e^{45})\\
			-(b+c)(e^{35}-e^{46})\\
			(a+c)(e^{16}+e^{25})\\
			(a+c)(e^{15}-e^{26})\\
			-(a+b)(e^{14}+e^{23})\\
			-(a+b)(e^{13}-e^{24})
		\end{array}\right) \qandq M\wedge J\be=\left(\begin{array}{c}
			-2(e^{36}+e^{45})\\
			-2(e^{35}-e^{46})\\
			2(e^{16}+e^{25})\\
			2(e^{15}-e^{26})\\
			-2(e^{14}+e^{23})\\
			-2(e^{13}-e^{24})
		\end{array}\right).\end{align*}
	Hence, for the entries $(R^\lambda)^1_7$, $(R^\lambda)^3_7$ and $(R^\lambda)^5_7$ we have:
	\begin{align*}
		(R^\lambda)^1_7=&-\frac{1}{36}\left[\left((3a(\lambda-1)-2\mu\lambda)(3a(\lambda+1)-2\mu\lambda)\right)e^{17}+\mu\lambda\left(\mu(\lambda+3)+3a(\lambda-1)\right)(e^{36}+e^{45})\right],\\
		(R^\lambda)^3_7=&-\frac{1}{36}\left[\left((3b(\lambda-1)-2\mu\lambda)(3b(\lambda+1)-2\mu\lambda)\right)e^{37}-\mu\lambda\left(\mu(\lambda+3)+3b(\lambda-1)\right)(e^{16}+e^{25})\right],\\
		(R^\lambda)^5_7=&-\frac{1}{36}\left[\left((3c(\lambda-1)-2\mu\lambda)(3c(\lambda+1)-2\mu\lambda)\right)e^{57}+\mu\lambda\left(\mu(\lambda+3)+3c(\lambda-1)\right)(e^{14}+e^{23})\right].
	\end{align*}
	Using \eqref{eq: G2_struct_dim_1}, we have that the equations $(R^\lambda)^k_7\wedge\psi=0$, for $k=1,3,5$ are equivalent to
	 either $\lambda=-1$ or the following system of equations
	\begin{align}\label{eq: F_k7_wedge_psi}
		2\mu^2\lambda-2\mu a\lambda+3a^2(\lambda-1)=0,\\ \nonumber
		2\mu^2\lambda-2\mu b\lambda+3b^2(\lambda-1)=0,\\
		2\mu^2\lambda-2\mu c\lambda+3c^2(\lambda-1)=0. \nonumber
	\end{align}If the latter holds, adding the three equations we obtain $4\mu^2\lambda+3(a^2+b^2+c^2)(\lambda-1)=0$, which implies $0< \lambda\leq 1$. 
 
 Now, we compute the entries $(R^\lambda)^1_2,(R^\lambda)^3_4$ and $(R^\lambda)^5_6$ of the curvature:
	\begin{align*}
		(R^\lambda)^1_2=& \d(\Psi_\lambda)^1_2+\sum_{k=1}^7(\Psi_\lambda)^1_k\wedge (\Psi_\lambda)^k_2\\
		=&-\frac{1}{6}(3a(\lambda+1)-2\mu\lambda)(ae^{12}+be^{34}+ce^{56})+\frac{\mu^2\lambda^2}{18}(e^{34}+e^{56})-\frac{1}{36}(3a(\lambda-1)-2\mu\lambda)^2e^{12}.
	\end{align*}
	Likewise we get:
	\begin{align*}
		(R^\lambda)^3_4=&-\frac{1}{6}(3b(\lambda+1)-2\mu\lambda)(ae^{12}+be^{34}+ce^{56})+\frac{\mu^2\lambda^2}{18}(e^{12}+e^{56})-\frac{1}{36}(3b(\lambda-1)-2\mu\lambda)^2e^{34},\\
		(R^\lambda)^5_6=&-\frac{1}{6}(3c(\lambda+1)-2\mu\lambda)(ae^{12}+be^{34}+ce^{56})+\frac{\mu^2\lambda^2}{18}(e^{12}+e^{34})-\frac{1}{36}(3c(\lambda-1)-2\mu\lambda)^2e^{56}.
	\end{align*}
	Using \eqref{eq: G2_struct_dim_1}, the equations $(R^\lambda)^k_{k+1}\wedge\psi=0$ for $k=1,3,5$ are equivalent to
	\begin{align*}
		-3(\lambda-1)^2a^2+4\mu^2\lambda+2(2\lambda^2-5\lambda-3)a\mu=0,\\
		-3(\lambda-1)^2b^2+4\mu^2\lambda+2(2\lambda^2-5\lambda-3)b\mu=0,\\
		-3(\lambda-1)^2c^2+4\mu^2\lambda+2(2\lambda^2-5\lambda-3)c\mu=0.
	\end{align*}
	The last system of equations implies the equation 
	\begin{equation}\label{eq: second_eq}
		-3(\lambda-1)^2(a^2+b^2+c^2)+2(2\lambda^2-\lambda-3)\mu^2=0.
	\end{equation}

 By the above, we know that $\lambda$ is either $-1$ or in the interval $0<\lambda\leq 1$. However, if $\lambda=-1$, \eqref{eq: second_eq} becomes $-12(a^2+b^2+c^2)-8\mu^2=0$, which cannot hold since $a,b,c$ do not vanish simultaneously. In addition,  $0< \lambda\leq 1$ implies $-25/8<2\lambda^2-\lambda-3\leq -2$ which, together with \eqref{eq: second_eq}, gives $\lambda=1$ and $\mu=0$.
\end{proof}

\begin{teo}\label{teo:gprim1}
 	Let $\mg$ be a $2$-step nilpotent Lie algebra with structure equation \eqref{eq: structure_equation_dim_1} and $\rG_2$-structure \eqref{eq: G2_struct_dim_1}. The connection $\nabla_\lambda$  is a $\rG_2$-instanton if and only if $\lambda=1$ and $\varphi$ is purely-coclosed.
\end{teo}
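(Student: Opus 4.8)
The plan is to split the biconditional into its two implications, relying on Lemma~\ref{lm: instanton_condition_dim1} for one of them. First recall that, in the present $\dim\mg'=1$ setting, $\varphi$ is purely coclosed exactly when $\mu=a+b+c=0$: this is the content of \eqref{eq:pccg1} together with the formula for $\tau_0$ in \eqref{eq: torsion_forms}. Consequently the ``only if'' direction is immediate: if $\nabla^\lambda$ is a $\rG_2$-instanton, then Lemma~\ref{lm: instanton_condition_dim1} forces $\lambda=1$ and $\mu=0$, which is precisely the assertion that $\lambda=1$ and $\varphi$ is purely coclosed. No further computation is needed here.

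For the converse I would assume $\lambda=1$ and $\mu=0$ and verify the instanton condition directly for the characteristic connection $\nabla^1$. The key step, which I expect to collapse the whole computation, is that the connection $1$-form degenerates to a very simple shape. Setting $\mu=0$ in the torsion \eqref{eq: Torsion_dim_1} annihilates the $J$- and $M$-blocks, and adding $\tfrac12 T$ to the Levi-Civita form \eqref{eq: LC_connection_dim_1} cancels the coupling terms $\mp A\be$ to the seventh direction, leaving
\begin{equation*}
\Psi_1=\Psi_0+\tfrac12 T=e^7\otimes A,
\end{equation*}
i.e.\ $A$ acting on $\mr=\langle e_1,\dots,e_6\rangle$ in the direction $e^7$ and annihilating $e_7$. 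From here the curvature \eqref{eq:Rlam} is read off at once: since $A$ is a constant endomorphism and $\d e^7=ae^{12}+be^{34}+ce^{56}$, one has $\d\Psi_1=\d e^7\otimes A$, while $\Psi_1\wedge\Psi_1=(e^7\wedge e^7)\otimes A^2=0$, so that
\begin{equation*}
R^1=\d e^7\otimes A.
\end{equation*}
In terms of \eqref{eq:Rlamab} this means every component is $(R^1)^\alpha_\beta=A_{\alpha\beta}\,\d e^7$, a scalar multiple of the single $2$-form $\d e^7$.

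It then remains to check that $\d e^7\in\Lambda^2_{14}$, i.e.\ $\d e^7\wedge\psi=0$; this makes $(R^1)^\alpha_\beta\wedge\psi=0$ for all $\alpha,\beta$ and hence $\nabla^1$ an instanton. Writing $\psi=\tfrac12\omega^2+\rho_-\wedge e^7$ as in \eqref{eq:phiomrho}, I would treat the two contributions separately. Wedging $\d e^7$ with $\tfrac12\omega^2$ produces $\mu\,e^{123456}$, which vanishes because $\mu=0$; wedging $\d e^7$ with $\rho_-$ vanishes for type reasons, since $e^{12},e^{34},e^{56}$ are $(1,1)$-forms for the complex structure $J$ while $\rho_-$ has type $(3,0)+(0,3)$, so their product is zero in three complex dimensions. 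This yields $\d e^7\wedge\psi=0$ and completes the ``if'' direction.

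I expect the only genuine subtlety to be the observation that $\Psi_1=e^7\otimes A$ when $\mu=0$ and $\lambda=1$; once this is in hand the curvature and the instanton check are essentially automatic and avoid any component-by-component grind. As a consistency check against Lemma~\ref{lm: instanton_condition_dim1}, setting $\lambda=1,\mu=0$ in the expressions already obtained for $(R^\lambda)^1_2,(R^\lambda)^3_4,(R^\lambda)^5_6$ gives $-a\,\d e^7,\,-b\,\d e^7,\,-c\,\d e^7$, in agreement with the entries $A_{12}=-a,\,A_{34}=-b,\,A_{56}=-c$ of $R^1=\d e^7\otimes A$, all remaining components vanishing as predicted.
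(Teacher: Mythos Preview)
Your proposal is correct and follows essentially the same route as the paper. Both use Lemma~\ref{lm: instanton_condition_dim1} for necessity, and for sufficiency both observe that with $\lambda=1$ and $\mu=0$ the connection $1$-form collapses to $\Psi_1=e^7\otimes A$, so that $R^1=\dd e^7\otimes A$, and then reduce the instanton condition to $\dd e^7\wedge\psi=0$; your type-reason argument for the vanishing of $\dd e^7\wedge\rho_-$ is a slightly more conceptual variant of the direct check the paper makes.
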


\begin{proof}
	From Lemma \ref{lm: instanton_condition_dim1}, a necessary condition for $R^\lambda\wedge\psi=0$ is $\lambda=1$ and $\mu=0$. The latter is equivalent to $\varphi$ purely-coclosed (see \eqref{eq:dvarphi1}). We shall prove that these conditions are also sufficient. 
	
	In fact,  fixing $\lambda=1$ and $\mu=0$ and using  \eqref{eq: LC_connection_dim_1} and \eqref{eq: Torsion_dim_1} we get that the connection  $1$-form $\Psi_1=\Psi_0+\frac{1}{2}T$ in \eqref{eq:psilam} becomes:
	\begin{align}
		\Psi_1=\left(\begin{array}{c|c}
			e^7\otimes A &  0\\ \hline
			0& 0
		\end{array}\right)\in \mg^\ast\otimes\su(3)\subset  \mg^\ast\otimes\mg_2,\label{eq:gprim1psi}
	\end{align}
	and the corresponding curvature $2$-form is:
	\begin{align}
		R^1=\left(\begin{array}{c|c}
			\d e^7\otimes A & 0 \\  \hline
			0& 0
		\end{array}\right)\in \Lambda^2\mg^\ast\otimes\su(3)\subset  \Lambda^2\mg^\ast\otimes\mg_2,\label{eq:gprim1R}
	\end{align}
	where $\d e^7=a(e^{12}-e^{56})+b(e^{34}-e^{56})$. Finally, notice that $\omega$ in \eqref{eq:phiomrho} is $\omega=e^{12}+e^{34}+e^{56}$ due to \eqref{eq: G2_struct_dim_1}, so a simple computation shows $\d e^7\wedge \omega^2=0$ and thus $R^1\wedge\psi=0$ by \eqref{eq:phiomrho}.
\end{proof}

\section{The Instanton Condition for 2-Step Nilpotent Lie Algebras \texorpdfstring{$\mg$}{g} Satisfying \texorpdfstring{$\dim\mg'\geq 2$}{dim g'>=2}. }\label{sec:cc2}

Recall from Section \ref{sec:prelim2st} that the commutator subalgebra $\mg'$ of a 2-step nilpotent Lie algebra of dimension 7 has dimension at most three. Having already worked with $\dim \mg'=1$, in this section we focus on the instanton question for $\nabla^\lambda$ for the remaining cases $\dim\mg'=2$ and $3$. We will show that,  in these cases, the instanton condition is closely related to the calibratedness of some central subspace.  First we show that if $\dim\mg'=3$ and the connection $\nabla^\lambda$ defined by a coclosed $\rG_2$-structure $\varphi$ is an instanton, then  $\varphi$  calibrates $\mg'$. 
Note that  this necessary condition excludes certain coclosed invariant $\rG_2$-structures from consideration (see \cite[Ex. 4.15]{dBMR}).

Next, we proceed to the general case $\dim\mg'\geq 2$. We show that for $\nabla^\lambda$ to be an instanton, $\lambda=1$, the coclosed structures are necessarily \emph{not} purely coclosed, that $\dim\mg'=3$ and the Lie algebra structure constants satisfy precise conditions.

\subsection{A Necessary Condition}\label{subsec: non-calibrated}
Let $\mg$ denote a 2-step nilpotent Lie algebra of dimension 7 with 3-dimensional commutator $\mg'$ and let $\varphi$ be a coclosed $\rG_2$-structure on $\mg$ inducing a metric $g=g_\varphi$. Fix an orthonormal  basis  $\{e_5,e_6,e_7\}$ of $\mg'$. Using the transitive action of $\rG_2$ on ordered pairs of orthonormal vectors in $\R^7$, one can get an orthonormal basis $\{\tilde e_1, \ldots,\tilde e_7\}$ of $\mg$ such that $\tilde e_6=e_6$ and $\tilde e_7=e_7$ and
\begin{equation}\label{eq:canphi}
	\varphi= \tilde e^{127}+\tilde e^{347}+\tilde e^{567}+\tilde e^{135}-\tilde e^{146}-\tilde e^{236}-\tilde e^{245}.
\end{equation}

Since $e_5\bot \tilde e_6,\tilde e_7$, we have $e_5\in\langle \{\tilde e_1, \ldots, \tilde e_5\}\rangle$. Moreover, the stabilizer of $\tilde e_6$ and $\tilde e_7$ fixes $\tilde e_5$ as well and acts as ${\rm SU}(2)$ on $\tilde \mr:=\langle \{\tilde e_1, \ldots, \tilde e_4\}\rangle$. Hence, we can assume that the component of $e_5$ on $\tilde\mr$ is proportional to $\tilde e_3$. This means that there exist $r,s\in\R$ such that $r^2+s^2=1$ and $e_5=r\tilde e_3+s\tilde e_5$. 

Recall that $\varphi$ is said to {\em calibrate} a 3-dimensional subspace $\mc$ of $\mg$ if there exists an orthonormal basis $\{z_1, z_2,z_3\}$ of $\mc$ such that $\varphi(z_1, z_2,z_3)=\pm1$. We refer the reader to  \cite{Harvey1982} for the theory of calibrations. Since $\varphi(e_5,e_6,e_7)= s$, one gets that $\varphi$ calibrates $\mg'$ if and only if $s=1$ and $r=0$.

Set $e_i:=\tilde e_i$ for $i=1,2,4$ and  $e_3:=s\tilde e_3-r \tilde e_5$, so that   $\{e_1, \ldots, e_7\}$ is a  $g$-orthonormal basis. In the dual basis, we have
\[
\left\{\begin{array}{l}
	\tilde e^3=s e^3+r e^5\\
	\tilde e^5=-r e^3+s e^5
\end{array}
\right.,
\]so the $\rG_2$-structure in \eqref{eq:canphi} in the new basis becomes
\begin{equation}\label{eq:G2_structure_rs}
	\varphi= e^{127}+e^{135}-e^{146}-r(e^{234}+e^{256}+e^{367}+e^{457})-s(e^{236}+e^{245}-e^{347}-e^{567}).
\end{equation}
We readily compute
\begin{equation}\label{eq:hodge_G2_structure_rs}
	\psi=e^{2357}-e^{2467}+e^{3456}-r(e^{1236}+e^{1245}-e^{1347}-e^{1567})+s(e^{1234}+e^{1256}+e^{1367}+e^{1457}).
\end{equation}
Notice that $\varphi$ induces the following $\SU(3)$-structure on $\langle \{e_1\}\rangle^\bot$:
\begin{equation}\label{eq:SU3_structure_rs}
	\omega=e^{27}+e^{35}-e^{46} \qandq \rho=s(e^{234}+e^{256}+e^{367}+e^{457})-r(e^{236}+e^{245}-e^{347}-e^{567})
\end{equation}
which verifies
\begin{equation*}
	\frac{\omega^3}{3!}=\frac{1}{4}\rho\wedge J^\ast\rho=e^{234567},
\end{equation*}
where $J$ is the almost complex structure given by $Je_2=e_7, Je_3=e_5$ and $Je_6=e_4$.
We note that since $e_5, e_6,e_7$ span $\mg'$, the structure constants satisfy
\begin{equation}\label{eq:deialrs}\left\{
\begin{array}{rcll}
     \d e^i&= &0, &\quad i=1, \ldots,4  \\
    \d e^i & =:&\alpha_i\in \Lambda^2\langle \{e_1, \ldots ,e_4\}\rangle^*,&\quad i=5,6,7.
\end{array}\right.
\end{equation}

\begin{lm}\label{lem: structure_equations_n37A}
     If $\varphi$ does not calibrate $\mg'$, then there exists an orthonormal basis $\{e_1,\dots,e_7\}$ of $\mg$ such that $\varphi$ is given by \eqref{eq:G2_structure_rs} and $\mg$ has structure constants 
	\begin{equation}\label{eq:struct_eqs_n37A_simplified}
		\dd e^5=d_3e^{13}, \quad \dd e^6=-d_4e^{14} \qandq \dd e^7=d_2e^{12},
	\end{equation}for some $d_2,d_3,d_4\in\R\bs\{0\}$.
\end{lm}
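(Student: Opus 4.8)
The plan is to work under the running hypothesis of this subsection, namely that $\nabla^\lambda$ is a $\rG_2$-instanton for some $\lambda\neq0$, and to show that this together with non-calibration of $\mg'$ rigidifies the structure equations \eqref{eq:deialrs} into \eqref{eq:struct_eqs_n37A_simplified}. By the discussion preceding the statement, non-calibration means $r\neq0$ (since $\varphi(e_5,e_6,e_7)=s$ with $r^2+s^2=1$). The unknowns are the three $2$-forms $\alpha_5,\alpha_6,\alpha_7\in\Lambda^2\langle e^1,\dots,e^4\rangle^*$ of \eqref{eq:deialrs}, equivalently the skew maps $j(e_5),j(e_6),j(e_7)\in\so(\mr)$ with $\mr=\langle e_1,\dots,e_4\rangle$ four-dimensional; there are eighteen real parameters, and the goal is to show that in a suitable basis all but three of them vanish, in the pattern $\alpha_7\propto e^{12}$, $\alpha_5\propto e^{13}$, $\alpha_6\propto e^{14}$.

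First I would impose coclosedness. Differentiating $\psi$ in \eqref{eq:hodge_G2_structure_rs} with the help of \eqref{eq:deialrs} (only $e^5,e^6,e^7$ have nonzero differentials) and setting $\d\psi=0$ produces a system of linear equations, with coefficients in $r,s$, among the components of $\alpha_5,\alpha_6,\alpha_7$; this already eliminates several parameters and expresses the survivors through a reduced set. Next I would compute the connection: from \eqref{eq:nabla}--\eqref{eq:psiLC} one reads off $\Psi_0$, from \eqref{eq:tor} the torsion $T$ (using that $\psi$ is closed), and then $\Psi_\lambda=\Psi_0+\tfrac\lambda2 T$ as in \eqref{eq:psilam}; the curvature two-forms $(R^\lambda)^\alpha_\beta$ follow from \eqref{eq:Rlam}--\eqref{eq:Rlamab}. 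These are explicit polynomials in $r,s,\lambda$ and the remaining components of the $\alpha_j$.

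The instanton condition is then imposed as $(R^\lambda)^\alpha_\beta\wedge\psi=0$ for all $\alpha,\beta$, exactly as in the proof of Lemma \ref{lm: instanton_condition_dim1}. Rather than using the whole system, I would single out the components coupling the central directions $e_5,e_6,e_7$ to $\mr$, since these carry the $j$-maps most directly; imposing $\wedge\psi=0$ on them yields a polynomial system in which $r$ factors out of the coefficients of the off-pattern components. Using $r\neq0$, these equations force every component of $\alpha_5,\alpha_6,\alpha_7$ outside $\spam\{e^{12},e^{13},e^{14}\}$ to vanish and relate the survivors. A final change of orthonormal basis preserving the normal form \eqref{eq:G2_structure_rs} of $\varphi$ (a rotation in the relevant coordinate planes compatible with the $\SU(3)$-data \eqref{eq:SU3_structure_rs}) aligns the three maps into \eqref{eq:struct_eqs_n37A_simplified}. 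That the constants are nonzero is automatic: $j|_{\mg'}$ is injective (Section \ref{sec:prelim2st}), so none of $j(e_5),j(e_6),j(e_7)$ can vanish, forcing $d_2,d_3,d_4\neq0$.

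The main obstacle is the curvature computation together with the ensuing bookkeeping: the instanton system comprises many wedge equations, and the delicate point is to extract from among them a sub-system whose coefficients factor through $r$, so that non-calibration $r\neq0$ kills the unwanted components without losing information. Keeping track of which residual orthogonal transformations still preserve \eqref{eq:G2_structure_rs}, so that the final diagonalization into \eqref{eq:struct_eqs_n37A_simplified} is legitimate, is the other place where care is needed. As in the $\dim\mg'=1$ case I expect only a handful of components $(R^\lambda)^\alpha_\beta$ to be genuinely needed, and isolating that minimal set is the key simplification.
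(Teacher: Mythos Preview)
Your proposal misreads the hypotheses of the lemma. The running assumption of \S\ref{subsec: non-calibrated} is only that $\dim\mg'=3$ and $\varphi$ is coclosed; the instanton condition is \emph{not} assumed in Lemma~\ref{lem: structure_equations_n37A}. It enters only later, in Proposition~\ref{pro:instcal}, where the structure equations \eqref{eq:struct_eqs_n37A_simplified} obtained in the lemma are used to compute the curvature and derive a contradiction. So by adding the instanton hypothesis you would be proving a strictly weaker statement than the one claimed, and your curvature computation is unnecessary for this step.

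The paper's argument is in fact much shorter and uses only $\d\psi=0$ together with $r\neq0$. Differentiating \eqref{eq:hodge_G2_structure_rs} and contracting the resulting $5$-form with any two of $e_5,e_6,e_7$ isolates the $r$-terms and yields $r\,e^1\wedge\alpha_i=0$ for $i=5,6,7$; since $r\neq0$, each $\alpha_i$ is of the form $e^1\wedge(\text{linear in }e^2,e^3,e^4)$. The remaining coclosed equations then force a symmetric $3\times3$ coefficient matrix, and an $\SO(3)$-diagonalisation---implemented by an orthogonal change of basis on $\langle e_2,e_3,e_4\rangle$ and on $\mg'$ that preserves \eqref{eq:G2_structure_rs}---produces \eqref{eq:struct_eqs_n37A_simplified}. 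Injectivity of $j|_{\mg'}$ gives $d_2,d_3,d_4\neq0$, as you note. Your first step (``impose coclosedness'') already contains the full argument; the curvature and the instanton system are not needed here and should be saved for the next proposition.
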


\begin{proof}
Assume that $\varphi$ does not calibrate $\mg'$ so that $r\neq 0$ in \eqref{eq:G2_structure_rs}. Differentiating \eqref{eq:hodge_G2_structure_rs} and using that $\varphi$ is coclosed and \eqref{eq:deialrs}, we get
\begin{eqnarray}
	0=\dd \psi &=&e^{237}\wedge \alpha_5-e^{235}\wedge \alpha_7
	-e^{247}\wedge \alpha_6+e^{246}\wedge \alpha_7+e^{346}\wedge \alpha_5-e^{345}\wedge \alpha_6\nonumber\\
	&&+re^1\wedge(-e^{67}\wedge \alpha_5+e^{57}\wedge \alpha_6
	-e^{56}\wedge \alpha_7-e^{34}\wedge \alpha_7+e^{24}\wedge \alpha_5+e^{23}\wedge \alpha_6)\label{eq:dvar}\\
	&&+se^1\wedge(e^{26}\wedge \alpha_5-e^{25}\wedge \alpha_6
	+e^{37}\wedge \alpha_6-e^{36}\wedge \alpha_7+e^{47}\wedge \alpha_5-e^{45}\wedge \alpha_7).\non
\end{eqnarray}
Taking the contraction of this 5-form with two elements in the commutator gives, for instance, 
\[0=e_6\lrcorner e_5\lrcorner\dd \psi=r\,e^1\wedge \alpha_7.\]
Since $r\neq 0$, this implies $e^1\wedge \alpha_7=0$. Similarly, one obtains $0=e^1\wedge \alpha_5=e^1\wedge \alpha_6$. Hence, we get that 
\begin{equation}\label{eq:ali1w}
    \alpha_{i+4}=e^1\wedge\left(a_{i1}e^2+a_{i2}e^3+a_{i3}e^4\right), \qquad i=1,2,3
\end{equation} for some $(a_{ij})_{i,j=1}^3$ and thus \eqref{eq:dvar} 
implies
\begin{equation}\label{eq:3eq}
	e^{23}\wedge \alpha_5-e^{24}\wedge \alpha_6=0,\quad
	e^{24}\wedge \alpha_7+e^{34}\wedge \alpha_5=0,\quad
	e^{23}\wedge \alpha_7
	+e^{34}\wedge \alpha_6=0.
\end{equation}
Replacing each $\alpha_{i+4}$ in \eqref{eq:3eq} by its expression in \eqref{eq:ali1w}, we obtain
\begin{equation}
	a_{11}=a_{32},\quad a_{21}=-a_{33},\quad a_{13}=-a_{22}.
\end{equation}
Consequently, the Lie algebra structure coefficients in terms of the orthonormal basis are:
\begin{equation}\label{eq:struct_eqs_n37A}
	\left\{\begin{array}{l}
		\dd e^5=e^1\wedge (a_{12}e^3+a_{13}e^4+a_{11}e^2),\\
		\dd e^6=e^1\wedge (-a_{13}e^3+a_{23}e^4+a_{21}e^2),\\
		\dd e^7=e^1\wedge  (a_{11}e^3-a_{21}e^4+a_{31}e^2).
	\end{array}\right.
\end{equation}

Consider the following matrices $M=(m_{ij})$ and $S=(s_{ij})$
	\begin{equation}
 		S=\left(\begin{array}{ccc}
			a_{12} & -a_{13} & a_{11} \\
			-a_{13} & -a_{23} & a_{21} \\
			a_{11} & a_{21} & a_{31}
		\end{array}\right),\qquad
  		M=\left(\begin{array}{ccc}
			0&1&0\\
            0&0&-1\\
            1&0&0
		\end{array}\right).
	\end{equation} Then \eqref{eq:struct_eqs_n37A} can be rewritten as $\d e^{k+4}= e^1\wedge\sum_{j,l=1}^3 s_{kj}m_{jl}e^{l+1}= e^1\wedge\sum_{l=1}^3 (SM)_{kl}e^{l+1}$, for $k=2,3,4$.
 Let $ Q=(q_{ij})\in \SO(3)$ be such that $S=QDQ^t$, with $D$ a diagonal matrix $D={\rm diag}(b_2,b_3,b_4)$.  Set 
 \begin{equation}\label{eq:deftil} f^1:=e^1,\quad
f^{i+4}:=\sum_{k=1}^3q_{ki}e^{k+4}=\sum_{k=1}^3(Q^t)_{ik}e^{k+4}, \quad f^{i+1}:=\sum_{l=1}^3=(M^tQ^tM)_{il}e^{l+1}, \qquad i=1,2,3.
 \end{equation}
 Then, for $i=1, 2,3$,
 \begin{eqnarray*}
     \dd f^{i+4}&=&e^1\wedge  \sum_{k=1}^3q_{ki} \d e^{k+4}
     =e^1\wedge  \sum_{j,l=1}^3(SQ)_{ji}m_{jl}e^{l+1}=e^1\wedge  \sum_{j,l=1}^3(QD)_{ji}m_{jl}e^{l+1}\\
     &=&b_{i}e^1\wedge  \sum_{l=1}^3(Q^tM)_{il}e^{l+1}=
     b_{i}e^1\wedge  \sum_{l,j=1}^3m_{ij}(M^tQ^tM)_{jl}e^{l+1}=b_{i}e^1\wedge  \sum_{j=1}^3m_{ij}f^{j+1}.
      \end{eqnarray*}
Hence we get formulas \eqref{eq:struct_eqs_n37A_simplified} for some $d_2,d_3,d_4\in\R$. Notice that if some $d_i$ vanishes, then $\dim\mg'<3$, so they are all non-zero. Since $M$ and $Q$ are orthogonal,  $\{f^1, \ldots f^7\}$ is an orthonormal basis. Moreover, the orthogonal map $A:\mg\to\mg$ verifying $A^*e^i= f^i$ as in \eqref{eq:deftil} preserves the subspaces $\langle\{ e_1\}\rangle$ and $\langle\{ e_1\}\rangle^\perp$, and also, it  preserves the $\SU(3)$-structure \eqref{eq:SU3_structure_rs} and the $\rG_2$-structure $\varphi$. Moreover, applying $A^*$ to both sides in \eqref{eq:G2_structure_rs} we get
\[
\varphi= f^{127}+f^{135}-f^{146}-r(f^{234}+f^{256}+f^{367}+f^{457})-s(f^{236}+f^{245}-f^{347}-f^{567}).
\]
\end{proof}

\begin{remark}By using the basis in Lemma \ref{eq:instanton_n37A} one can show that if $\mg$ admits a coclosed $\rG_2$-structure
not calibrating $\mg'$ then $\mg$ is isomorphic to the Lie algebra $\mn_{7,3,A}$ in Gong's classification \cite{Gon98}.  This fact was already shown through a slightly different argument in \cite[Proposition 4.14]{dBMR}.

\end{remark}
Now we consider the affine connection $\nabla^\lambda$  on $\mg$  defined by $\varphi$ as in \eqref{eq:np} and we study the instanton condition on it. 

\begin{pro}\label{pro:instcal} If $\nabla^\lambda$ is an instanton, then  $\varphi$ calibrates $\mg'$.
\end{pro}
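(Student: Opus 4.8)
The plan is to establish the contrapositive: assuming $\varphi$ does \emph{not} calibrate $\mg'$ (so $r\neq 0$), I will show that $\nabla^\lambda$ cannot be an instanton. By Lemma \ref{lem: structure_equations_n37A}, the non-calibrated hypothesis already hands me a very rigid normal form: an orthonormal basis in which $\varphi$ is given by \eqref{eq:G2_structure_rs} and the structure equations reduce to $\d e^5=d_3e^{13}$, $\d e^6=-d_4e^{14}$, $\d e^7=d_2e^{12}$ with all $d_i\neq 0$. The strategy is to exploit this explicit presentation: compute the Levi-Civita connection $1$-form $\Psi_0$ from \eqref{eq:nabla}, the torsion $3$-form $T$ from \eqref{eq:tor} using $\d\varphi$ (obtainable by differentiating \eqref{eq:G2_structure_rs}), assemble $\Psi_\lambda=\Psi_0+\tfrac{\lambda}{2}T$ as in \eqref{eq:psilam}, and then test the instanton condition $(R^\lambda)^\alpha_\beta\wedge\psi=0$ entry by entry exactly as was done in the proof of Lemma \ref{lm: instanton_condition_dim1}.

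The key point is that I expect only \emph{a few} well-chosen curvature entries to already force a contradiction. Concretely, I would first single out the curvature components that mix the $e^1$ direction with the central directions $e^5,e^6,e^7$, since the structure equations couple $e^1$ to all three central generators but leave $e_2,e_3,e_4$ differentials zero; this asymmetry is precisely what the calibration failure encodes. Wedging these entries with $\psi$ from \eqref{eq:hodge_G2_structure_rs} and reading off the coefficient of a conveniently chosen $6$-form should yield polynomial relations in $\lambda, r, s, d_2, d_3, d_4$. The aim is to show that these relations, combined with $r\neq 0$ and $d_i\neq 0$, are inconsistent (or force $r=0$, contradicting the non-calibration assumption). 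A natural auxiliary step is to extract a single scalar obstruction by summing or symmetrizing several entries — mirroring how \eqref{eq: second_eq} was produced in Lemma \ref{lm: instanton_condition_dim1} — so that the $d_i$ appear through manifestly nonzero combinations like $d_2^2+d_3^2+d_4^2$ and cannot cancel.

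The main obstacle I anticipate is purely computational bookkeeping: the $\rG_2$-structure \eqref{eq:G2_structure_rs} is no longer in the ``diagonal'' $\SU(3)$ form of Section \ref{sec:cc1}, so $\Psi_0$, $T$, and hence $R^\lambda$ will have off-diagonal blocks governed by both $r$ and $s$ (with $r^2+s^2=1$), making the entries $(R^\lambda)^\alpha_\beta$ genuinely messier than in the $\dim\mg'=1$ case. Managing the mixing between the $\tilde e_3,\tilde e_5$ rotation and the curvature terms, and correctly computing the wedge products with the four-form $\psi$, is where errors are most likely to creep in. To keep this under control I would organize the computation so that the contradiction emerges from the smallest possible set of entries — ideally those of the form $(R^\lambda)^1_j$ for $j\in\{5,6,7\}$ — and only expand the remaining structure in those specific entries, rather than computing the full curvature tensor. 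Since the statement only asserts the necessary condition (not a sharp description of $\lambda$), it suffices to exhibit one incompatible equation, so I would stop as soon as the $r\neq 0$ hypothesis collides with the instanton equations.
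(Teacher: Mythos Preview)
Your overall strategy coincides with the paper's: argue by contrapositive, invoke Lemma~\ref{lem: structure_equations_n37A} to reduce to the explicit normal form with $d_2,d_3,d_4\neq 0$, compute $\Psi_0$, $T$, $\Psi_\lambda$, and derive a contradiction from a handful of curvature entries wedged with $\psi$.

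The tactical difference is in \emph{which} entries are used. You propose $(R^\lambda)^1_j$ for $j\in\{5,6,7\}$, reasoning that $e_1$ is distinguished by the structure equations. The paper instead runs a two-stage argument on different entries. First, the components $(R^\lambda)^1_2,(R^\lambda)^1_3,(R^\lambda)^1_4$ (mixing $e_1$ with the other \emph{non-central} directions) produce, after wedging with $\psi$, terms of the form $s\lambda\, d_id_j$ in one $6$-form coefficient; since $\lambda\neq 0$ and all $d_i\neq 0$, this forces $s=0$ (hence $r^2=1$). Second, with $s=0$ the entries $(R^\lambda)^2_7,(R^\lambda)^3_5,(R^\lambda)^4_6$ reduce to $\tfrac{d_i^2}{4}(1-\lambda^2)$, forcing $\lambda=\pm 1$. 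Feeding $\lambda=\pm 1$ back into the scalar equation obtained by summing the three relations from the first step then yields a strictly negative combination of $d_2^2+d_3^2+d_4^2$ equal to zero --- the desired contradiction. So the paper's argument is precisely the ``sum/symmetrize to get a manifestly nonzero quantity'' maneuver you anticipated, but it requires both batches of entries, not a single one, and the decisive first batch does not involve the central indices at all. Your chosen entries $(R^\lambda)^1_j$, $j=5,6,7$, may well also lead to a contradiction, but you should be prepared for the possibility that a single family of entries is not enough and that an auxiliary constraint (like $s=0$ or $\lambda^2=1$) must be extracted first.
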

\begin{proof}
	Let us suppose that $\varphi$ does not calibrate $\mg'$. Then, there is a basis satisfying the conditions in Lemma \ref{lem: structure_equations_n37A}. By \eqref{eq:struct_eqs_n37A_simplified}, the connection 1-form of the Levi-Civita connection in that basis is 
\begin{equation}
	\Psi_0=\frac{1}{2}\left(\begin{array}{ccc}
		0 & -\zeta_1^T & -\zeta_2^T\\
		\zeta_1 &  \mathbf{0}_{3\times 3} & -B^T\\
		\zeta_2 & B & \mathbf{0}_{3\times 3} 
	\end{array}\right), 
\end{equation}
for
\begin{equation}
	\zeta_1=\left(\begin{array}{c}
		d_2 e^7 \\ d_3e^5 \\ -d_4e^6
	\end{array}\right), \quad \zeta_2=\left(\begin{array}{c}
		d_3 e^3 \\ -d_4e^4 \\ d_2e^2
	\end{array}\right) \qandq B=\left(\begin{array}{ccc}
		0 & -d_3e^1 & 0 \\
		0 & 0 & d_4e^1 \\
		-d_2e^1 & 0 & 0
	\end{array}\right)
\end{equation}
Setting $\mu=d_2+d_3+d_4$, the torsion 3-form \eqref{eq:tor} is:
\begin{align*}
	T=&\frac{\mu}{3}\left(e^{127}+e^{135}-e^{146}\right)-r\left(\frac{\mu}{3}e^{234}-\left(\frac{2\mu}{3}-d_2\right)e^{256}-\left(\frac{2\mu}{3}-d_3\right)e^{367}-\left(\frac{2\mu}{3}-d_4\right)e^{457}\right)\\
	&-s\left(\left(\frac{\mu}{3}-d_4\right)e^{236}+\left(\frac{\mu}{3}-d_3\right)e^{245}-\left(\frac{\mu}{3}-d_2\right)e^{347}+\frac{2\mu}{3}e^{567}\right).
\end{align*}

 Therefore, the connection 1-form  $\Psi_\lambda$ of $\nabla^\lambda$, which is given  \eqref{eq:psilam},  has the following curvature terms (see \eqref{eq:Rlamab})
	\begin{align}\label{eq:curvatures_12-13-14}\nonumber
		(R^\lambda)^1_2=p(d_2)e^{12}&+s\lambda\left(p_1(d_2,d_3,d_4)e^{34}+p_2(d_2,d_3,d_4)e^{56}\right)\\ \nonumber
		&-r\lambda\left(p_3(d_2,d_3,d_4)e^{36}+p_4(d_2,d_3,d_4)e^{45}\right)\\
		(R^\lambda)^1_3=p(d_3)e^{13}&-s\lambda\left(p_1(d_3,d_4,d_2)e^{24}-p_2(d_3,d_4,d_2)e^{67}\right)\\ \nonumber
		&+r\lambda\left(p_3(d_3,d_4,d_2)e^{47}+p_4(d_3,d_4,d_2)e^{26}\right)\\ \nonumber		(R^\lambda)^1_4=p(d_4)e^{14}&+s\lambda\left(p_1(d_4,d_2,d_3)e^{23}+p_2(d_4,d_2,d_3)e^{57}\right)\\ \nonumber
		&+r\lambda\left(p_3(d_4,d_2,d_3)e^{25}-p_4(d_4,d_2,d_3)e^{37}\right)
	\end{align}
	where $	p(x)=-\frac{\mu^2}{36}\lambda^2-\frac{3}{4}x^2$ and
	\begin{align}\label{eq:p_functions}\nonumber
		p_1(x,y,z)=&\frac{1}{36}\left((3x-\mu)\mu\lambda+6y^2-3xy-6yz-3xz+6z^2\right)\\
		p_2(x,y,z)=&\frac{1}{36}\left((3x-\mu)\mu\lambda+3y^2+3xy-12yz+3xz+3z^2\right)\\ \nonumber
		p_3(x,y,z)=&\frac{1}{36}\left((3x-\mu)\mu\lambda+6y^2-3xy+9yz+3xz+3z^2\right)\\ \nonumber
		p_4(x,y,z)=&\frac{1}{36}\left((3x-\mu)\mu\lambda+3y^2+3xy+9yz-3xz+6z^2\right).
	\end{align}
	Taking the wedge product between \eqref{eq:hodge_G2_structure_rs} and \eqref{eq:curvatures_12-13-14}, we have
	\begin{align}\label{eq: R1_wedge_ast_varphi} \nonumber
		(R^\lambda)^1_2\wedge\psi=&\left(s^2\lambda(p_1+p_2)+r^2\lambda(p_3+p_4)+p\right)e^{123456}+sr\lambda(p_1+p_2-p_3-p_4)e^{134567}\\
		(R^\lambda)^1_3\wedge\psi=&\left(s^2\lambda(p_1+p_2)+r^2\lambda(p_3+p_4)+p\right)e^{123467}-sr\lambda(p_1+p_2-p_3-p_4)e^{124567}\\ \nonumber
		(R^\lambda)^1_4\wedge\psi=&\left(s^2\lambda(p_1+p_2)+r^2\lambda(p_3+p_4)+p\right)e^{123457}+sr\lambda(p_1+p_2-p_3-p_4)e^{123567}.
	\end{align}
	From \eqref{eq:p_functions} and \eqref{eq: R1_wedge_ast_varphi}, the condition $(R^\lambda)^1_j\wedge\psi=0$ for $j=2,3,4$ implies
	\begin{align}\label{eq:instanton_n37A}\nonumber
		2d_2\mu\lambda^2+3s^2\lambda(d_3-d_4)^2+3r^2\lambda(d_3+d_4)^2-\mu^2\lambda^2-9d_2^2=0, \quad  s\lambda d_3d_4=0\\
		2d_3\mu\lambda^2+3s^2\lambda(d_4-d_2)^2+3r^2\lambda(d_4+d_2)^2-\mu^2\lambda^2-9d_3^2=0, \quad s\lambda d_2d_4=0\\ \nonumber
		2d_4\mu\lambda^2+3s^2\lambda(d_2-d_3)^2+3r^2\lambda(d_2+d_3)^2-\mu^2\lambda^2-2d_4^2=0, \quad s\lambda d_2d_3=0.
	\end{align} 
	As pointed out in \S \ref{sec:prelim2st}, $\lambda\neq 0$ since $\mg$ is not abelian. Moreover, since $d_2,d_3,d_4$ are all nonzero, the above equation implies $s=0$.  We will show that this takes us to a contradiction. 
 
 When $s=0$, \eqref{eq:instanton_n37A} becomes
	\begin{align}\label{eq:instanton_n37A_s=0}\nonumber
		2d_2\mu\lambda^2+3\lambda(\mu-d_2)^2-\mu^2\lambda^2-9d_2^2=0\\
		2d_3\mu\lambda^2+3\lambda(\mu-d_3)^2-\mu^2\lambda^2-9d_3^2=0\\ \nonumber
		2d_4\mu\lambda^2+3\lambda(\mu-d_4)^2-\mu^2\lambda^2-9d_4^2=0.
	\end{align} 
	Similarly, we have the curvature terms
	\begin{align}\label{eq:curvatures_35-46-27}\nonumber
		(R^\lambda)^2_7=q(d_2)e^{27}&-\lambda^2\left(q_1(d_2,d_3,d_4)e^{35}-q_1(d_2,d_4,d_3)e^{46}\right)\\ 
		(R^\lambda)^3_5=q(d_3)e^{35}&-\lambda^2\left(q_1(d_3,d_2,d_4)e^{27}-q_1(d_3,d_4,d_2)e^{46}\right)\\ \nonumber
		(R^\lambda)^4_6=q(d_4)e^{46}&+\lambda^2\left(q_1(d_4,d_2,d_3)e^{27}+q_1(d_4,d_3,d_2)e^{35}\right)
	\end{align}
	where $	q(x)=-\frac{\mu^2}{36}\lambda^2+\frac{1}{4}x^2$ and $q_1(x,y,z)=\frac{1}{36}\left(4x^2-xy-xz+4y^2-yz-5z^2\right)$. Thus, 
	taking the wedge of the forms in \eqref{eq:G2_structure_rs} and \eqref{eq:curvatures_35-46-27}, we get
	\begin{align}\label{eq: R2_wedge_ast_varphi} \nonumber
		(R^\lambda)^2_7\wedge\psi=&(q(d_2)-\lambda^2(q_1(d_2,d_3,d_4)+q_1(d_2,d_4,d_3)))e^{234567}=\frac{d_2^2}{4}(1-\lambda^2)e^{234567}\\
		(R^\lambda)^3_5\wedge\psi=&(q(d_3)-\lambda^2(q_1(d_3,d_2,d_4)+q_1(d_3,d_4,d_2)))e^{234567}=\frac{d_3^2}{4}(1-\lambda^2)e^{234567}\\ \nonumber
		(R^\lambda)^4_6\wedge\psi=&(\lambda^2(q_1(d_4,d_2,d_3)+q_1(d_4,d_3,d_2))-q(d_4))e^{234567}=\frac{d_4^2}{4}(\lambda^2-1)e^{234567}.
	\end{align}
	The non-trivial vanishing condition on \eqref{eq: R2_wedge_ast_varphi} implies that $\lambda=\pm 1$ and using this in \eqref{eq:instanton_n37A_s=0}, we obtain the equation
	\begin{equation}\label{eq:d2_d3_d4=0}
		3\lambda(\mu^2+d_2^2+d_3^2+d_4^2)-\mu^2-9(d_2^2+d_3^2+d_4^2)=0.
	\end{equation}
	For $\lambda=-1$, the equation \eqref{eq:d2_d3_d4=0} only has solution for $d_2=d_3=d_4=0$, but these coefficients are all nonzero. Hence $\lambda=1$. But in this case,  \eqref{eq:d2_d3_d4=0} becomes
	\begin{align*}
		0=3(\mu^2+d_2^2+d_3^2+d_4^2)-\mu^2-9(d_2^2+d_3^2+d_4^2)=4(d_2d_3+d_2d_4+d_3d_4-d_2^2-d_3^2-d_4^2),
	\end{align*}
 for which one can readily check that the only solutions are $d_2=d_3=d_4=0$, leading us to a contradiction. So $\nabla^\lambda$ is not an instanton.
\end{proof}

\subsection{Necessary and Sufficient Conditions}\label{ss:nsc}
Let $\mg$ be a $2$-step nilpotent Lie algebra of dimension $7$ such that $\dim\mg'=2$ or $3$ and admitting  coclosed $\rG_2$-structures $\varphi$. We will study the instanton condition for the connection $\nabla^\lambda$ defined by $\varphi$ through  formula \eqref{eq:np}.

Assume for the moment that the coclosed $\rG_2$-structure $\varphi$ on $\mg$ admits a calibrated subspace $\mc$ satisfying $\mg'\subset\mc\subset\mz$. Then, there exists an orthonormal basis  $\{e_1, \ldots, e_7\}$  of $\mg$ such that (see \cite{Harvey1982})  $\mc=\langle \{e_5,e_6,e_7\}\rangle$ and 
\begin{equation}\label{eq:gcvar}
   \varphi=e^{127}+e^{347}+e^{567} + e^{135} -e^{146} -e^{236}-e^{245}.
\end{equation} Let $\mq$ denote the orthogonal of $\mc$. 
Since $\mq\bot\mg'$, the structure coefficients of $\mg$ satisfy
\begin{equation}\label{eq:deial}\left\{
\begin{array}{rcll}
     \d e^i&= &0, &\quad i=1, \ldots,4  \\
    \d e^i & =:&\alpha_i\in \Lambda^2\mq^*,&\quad i=5,6,7.
\end{array}\right.
\end{equation}

Consider the orientation induced by  $e^{1234}$ on $\mq$. Then,  the space of $2$-forms $\Lambda^2\mq^*$ decomposes into the eigenspaces of the Hodge operator, namely, into the space of self-dual and anti-self-dual forms. Hence, we can decompose further
\begin{equation}\label{eq:strbody}
	\alpha_i=\d e^i=\alpha_i^++\alpha_i^-  ,\quad i=5,6,7,
\end{equation}where $\alpha^+_i$ (resp.~$\alpha_i^-$) is the self-dual (resp.~anti-self-dual) component of $\alpha_i$ in $\Lambda^2\mq^*$. Notice that $\{\alpha_5,\alpha_6,\alpha_7\}$ span a subspace of dimension $\dim\mg'$ inside $\Lambda^2\mq^*$. 

Consider the basis of self-dual $2$-forms in $\mq$ given by
\begin{equation}
	\label{eq:sigmabasis}
	\sigma_1^+=e^{13}-e^{24}, \quad \sigma_2^+=-e^{14}-e^{23},\quad \sigma_3^+=e^{12}+e^{34}.
\end{equation} Since $\varphi$ is given by \eqref{eq:gcvar}, we can use this basis to write 
\begin{equation}
	\label{eq:phibody}
	\varphi=\sigma_1^+\wedge e^5+\sigma_2^+\wedge e^6+\sigma_3^+\wedge e^7+e^{567},
\end{equation}
so that the Hodge dual of $\varphi$ becomes
\begin{equation}\label{eq:stphi}
	\psi=e^{1234}+\sigma_1^+\wedge e^{67}+\sigma_2^+\wedge e^{75}+\sigma_3^+\wedge e^{56}.
\end{equation}

By \eqref{eq:strbody} and \eqref{eq:phibody} we get
\begin{equation}\label{eq:pwdp}
	\varphi\wedge \d\varphi=2(\sigma_1^+\wedge \alpha_5^++\sigma_2^+\wedge \alpha_6^++\sigma_3^+\wedge \alpha_7^+)\wedge e^{567}
\end{equation}
and, since $\varphi$ is coclosed,
\begin{equation}
	0=\d\psi=(\sigma_2^+\wedge \alpha_7^+-\sigma_3^+\wedge \alpha_6^+)\wedge e^{5}+(\sigma_3^+\wedge \alpha_5^+-\sigma_1^+\wedge \alpha_7^+)\wedge e^{6}+(\sigma_1^+\wedge \alpha_6^+-\sigma_2^+\wedge \alpha_5^+)\wedge e^{7}.
\end{equation}
This implies that the matrix $S=(s_{ij})_{i,j=1}^3$, with components $s_{ij}:=\frac12 g(\sigma_i^+,\alpha_{j+4}^+)$, is symmetric (see also \cite{dBMR}). Hence, in the space of self-dual forms in $\Lambda^2\mq^*$, we have
\begin{equation}\label{eq:cc}
	\alpha_5^+=s_{11}\sigma_1^++s_{12}\sigma_2^++s_{13}\sigma_3^+,\quad 
	\alpha_6^+=s_{12}\sigma_1^++s_{22}\sigma_2^++s_{23}\sigma_3^+,\quad
	\alpha_7^+=s_{13}\sigma_1^++s_{23}\sigma_2^++s_{33}\sigma_3^+.
\end{equation} 

The next result shows that $\mg$ admits a basis on which the matrix $S=(s_{ij})_{i,j=1}^3$ corresponding to the $\rG_2$-structure is diagonal.

\begin{pro}\label{pro:Sdiag}
Let $\varphi$ be a coclosed $\rG_2$-structure on a Lie algebra $\mg$ with $\dim\mg'\geq2$. If $\varphi$ calibrates a subspace $\mc$ satisfying $\mg'\subset\mc\subset\mz$, then there exists an orthonormal basis $\{e_1, \ldots, e_7\}$ of $\mg$ such that $e_5,e_6,e_7$ span $\mc$, \eqref{eq:gcvar} holds and, in terms of the basis \eqref{eq:sigmabasis},
	\begin{equation*}
		(\d e^5)^+=d_5 \sigma_1^+, \quad (\d e^6)^+=d_6 \sigma_2^+ \quad \text{and} \quad (\d e^7)^+=d_7\sigma_3^+,
	\end{equation*}for some $d_i\in\R$, $i=5,6,7$.
	\end{pro}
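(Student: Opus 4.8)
The plan is to realize the conjugation action $S\mapsto QSQ^t$ of $\SO(3)$ on the symmetric matrix $S=(s_{ij})$ by orthonormal changes of basis that preserve the canonical form \eqref{eq:gcvar} of $\varphi$, and then to invoke the spectral theorem. The essential point is that an arbitrary rotation of the three self-dual $2$-forms $\sigma_1^+,\sigma_2^+,\sigma_3^+$ of $\mq$ can be implemented by a genuine isometry of $\mq$.

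First I would fix $Q=(Q_{ij})\in\SO(3)$ and build the new coframe. Since the homomorphism $\SO(4)\to\SO(\Lambda^2_+\mq^*)\cong\SO(3)$ given by the action on the space $\Lambda^2_+\mq^*$ of self-dual $2$-forms is surjective (one of the two $\SU(2)$ factors in the double cover of $\SO(4)$ acts as the full $\SO(3)$ on $\Lambda^2_+\mq^*$), there is $A\in\SO(4)$ with $A^*\sigma_i^+=\sum_j Q_{ij}\sigma_j^+$. Define $f^i:=A^*e^i$ for $i=1,\dots,4$ and $f^{i+4}:=\sum_{j}Q_{ij}e^{j+4}$ for $i=1,2,3$. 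As $\mq\perp\mc$ and both $A$ and $Q$ are orthogonal, $\{f_1,\dots,f_7\}$ is again orthonormal, the vectors $f_5,f_6,f_7$ still span $\mc$, and the inclusions $\mg'\subset\mc\subset\mz$ are untouched.

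Next I would verify that $\varphi$ retains the form \eqref{eq:phibody} in the new frame. Writing $\tilde\sigma_i^+$ for the self-dual forms built from $f^1,\dots,f^4$ as in \eqref{eq:sigmabasis}, one has $\tilde\sigma_i^+=A^*\sigma_i^+=\sum_j Q_{ij}\sigma_j^+$; orthogonality of $Q$ then gives $\sum_i\tilde\sigma_i^+\wedge f^{i+4}=\sum_j\sigma_j^+\wedge e^{j+4}$, while $f^{567}=\det(Q)\,e^{567}=e^{567}$, so $\varphi$ is unchanged and \eqref{eq:gcvar} persists. For the effect on $S$, the new structure equations read $\d f^{i+4}=\sum_j Q_{ij}\,\d e^{j+4}$; since $A\in\SO(4)$ preserves the orientation, hence the Hodge star, of $\mq$, self-dual parts are preserved, so $(\d f^{i+4})^+=\sum_j Q_{ij}\alpha_{j+4}^+$. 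Substituting \eqref{eq:cc} and re-expanding in the basis $\{\tilde\sigma_m^+\}$ yields exactly $\tilde S=QSQ^t$.

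Finally, since $S$ is symmetric, the spectral theorem gives $S=ODO^t$ with $D=\diag(d_5,d_6,d_7)$ diagonal; replacing $O$ by $O\,\diag(-1,1,1)$ if necessary (which fixes $D$, as $\diag(-1,1,1)\,D\,\diag(-1,1,1)=D$) we may take $O\in\SO(3)$, and the choice $Q=O^t$ produces $\tilde S=D$. In the resulting frame $(\d e^5)^+=d_5\sigma_1^+$, $(\d e^6)^+=d_6\sigma_2^+$ and $(\d e^7)^+=d_7\sigma_3^+$, as claimed; the anti-self-dual parts play no role. The only step that is not mere bookkeeping is the lifting of $Q\in\SO(3)$ to $A\in\SO(4)$: once the surjectivity of $\SO(4)\to\SO(\Lambda^2_+\mq^*)$ is in hand, both the invariance of $\varphi$ and the conjugation rule for $S$ follow by direct computation with orthogonal matrices.
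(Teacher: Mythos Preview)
Your argument is correct and follows the same overall strategy as the paper: diagonalize the symmetric matrix $S$ via a change of orthonormal basis that keeps $\varphi$ in the canonical form \eqref{eq:gcvar}. The execution differs only in language. The paper identifies $\mc$ with $\im\H$ via the cross product of $\varphi$, writes each $\tilde\sigma_l^+$ as left-multiplication by a unit imaginary quaternion, realizes the diagonalizing rotation as $P_h(x)=hx\bar h$ for some $h\in\Sp(1)$, and then uses the explicit $\rG_2$-element $\Phi(a,b)=(\bar hah,\bar hbh)$ on $\H\oplus\im\H$, invoking \cite{Harvey1982} for the fact that $\Phi\in\rG_2$. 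You instead lift $Q\in\SO(3)$ abstractly through the surjection $\SO(4)\to\SO(\Lambda^2_+\mq^*)$, combine the resulting $A\in\SO(4)$ on $\mq$ with $Q$ on $\mc$, and verify by a direct orthogonality computation that $\varphi=\sum_i\tilde\sigma_i^+\wedge f^{i+4}+f^{567}$ is preserved. The paper's $\Phi|_{\mq}$ is one particular choice of your $A$; your argument shows any lift works, since $\varphi$ involves only self-dual forms on $\mq$. Your route avoids quaternionic bookkeeping at the cost of leaving the $\rG_2$-nature of the combined transformation implicit rather than explicit.
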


\begin{proof}
Let $\{\tilde e_1, \ldots,\tilde e_7\}$ be a  basis of $\mg$ such that $\{\tilde e_5,\tilde e_6,\tilde e_7\}$ span $\mc$ and \eqref{eq:gcvar} holds. In particular, 
\begin{equation}\label{eq:philem}
		\varphi=\tilde\sigma_1^+\wedge \tilde e^5+\tilde\sigma_2^+\wedge \tilde e^6+\tilde\sigma_3^+\wedge \tilde e^7+\tilde e^{567},
	\end{equation}where $\tilde\sigma_1^+,\tilde\sigma_2^+,\tilde \sigma_3^+$ is the basis of self-dual forms in $\mc^\bot=\langle\{\tilde e_1, \ldots,\tilde e_4\}\rangle$ given in \eqref{eq:sigmabasis}
	
	According to the identification $\mc=\langle\{\tilde e_5,\tilde e_6,\tilde e_7\}\rangle$ with the imaginary quaternions $ \im \H$ given by the cross product of $\varphi$, we can write $\tilde\sigma_l^+(u,v)=\langle \tilde e_{l+4}\cdot u,v\rangle$ for $l=1,2,3$, where the product $\cdot$ on the right hand side is the quaternion product.
	
	Since $\varphi$ is coclosed, the self dual forms $\tilde\alpha_i^+:=(\d \tilde e^i)^+$, $i=5,6,7$ are completely determined by $S\tilde e_5,S\tilde e_6,S\tilde e_7\in \im\H$, respectively, where  $S$ is the symmetric matrix in \eqref{eq:cc} corresponding to $\{\tilde e_1, \ldots,\tilde e_7\}$. Namely, for each $i=5,6,7$,
	\[	 \tilde \alpha_{i}^+(u,v)=\langle S\tilde e_{i}\cdot u,v\rangle, \qquad \forall u,v\in\mc^\bot.\]
	
	Since $S$ is symmetric, it can be written as $S=P_hDP_h^t$ where $P_h\in \SO(3)$ is given by $P_h(x)=hx\bar{h}$ with $h\in \Sp(1)$, $x\in \R^3$ and $D={\rm diag}(d_5,d_6,d_7)$. Now, for $h\in \Sp(1)$ denote by $\Phi\in \rG_2$ the map $\Phi(a,b)=(\bar{h}ah,\bar{h}bh)$ for $(a,b)\in \R^7=\H\oplus\im\H\simeq\mg$ (see \cite[Ch IV, Eq. (1.9)]{Harvey1982}). By definition, $\Phi$ preserves $\mc$ and $\mc^\bot$.
 
 Consider the basis $\{e_i:=\Phi\tilde e_i\}_{i=1}^7$; in particular, for $i=5,6,7$, $e_i=P_h^t\tilde e_ i$. Using that the inner product is invariant under right and left multiplications by quaternions, for every $u,v\in \mc^\bot$, we have  
	\begin{eqnarray*}
		\tilde \alpha_{l+4}^+(u,v)&=&\langle S\tilde e_{l+4}\cdot u,v\rangle=d_l\langle h e_{l+4}\bar{h}u,v\rangle=d_l\langle e_{l+4}\bar{h}u,\bar{h}v\rangle=d_l\langle e_{l+4}\bar{h}uh,\bar{h}vh\rangle\\
		&=&d_l\sigma_l^+(\Phi u,\Phi v)=d_l\Phi^*\sigma_l^+.
	\end{eqnarray*}Hence $(\Phi^t)^\ast\tilde \alpha_{l+4}^+=d_l\sigma_l^+$. 
	
	Notice that $\Phi^*e^i=\tilde e^i$ for $i=1, \ldots, 7$. Since $\Phi$ is an isometry and thus commutes with the Hodge star operator we finally obtain
	\[d_l\sigma_l^+= (\Phi^t)^\ast\tilde\alpha_{l+4}^+=(\Phi^t)^\ast (\dd \tilde e^{l+4})^+= ((\Phi^t)^\ast \dd \tilde e^{l+4})^+=(\dd (\Phi^t)^\ast  \tilde e^{l+4})^+=(\dd e^{l+4})^+,
	\]as we wanted to show. Moreover, since $\Phi\in\rG_2$, applying $(\Phi^t)^*$ to both sides of the equality \eqref{eq:philem}, we get
	\[
	\varphi=\sigma_1^+\wedge  e^5+\sigma_2^+\wedge  e^6+\sigma_3^+\wedge  e^7+ e^{567},
	\]so $\{e_1, \ldots,e_7\}$ is the required basis.
\end{proof}

We are now ready to introduce the main results of the section.
\begin{pro}\label{pro:caracg2}
    Let $\varphi$ be a coclosed $\rG_2$-structure on $\mg$ and let $\nabla^\lambda$ be the connection defined by $\varphi$ via \eqref{eq:np}.
    The connection $\nabla^\lambda$ is a $\rG_2$-instanton if and only if $\dim\mg'=3$ and the following conditions hold
    \begin{enumerate}
\item  $\lambda=1$,
\item  there exists an orthonormal basis $\{e_1, \ldots, e_7\}$  of $\mg$ such that $\mg'=\langle\{ e_5,e_6,e_7\}\rangle$, $\varphi$ is as in \eqref{eq:gcvar} and, for some $\mu\neq 0$,
    \begin{equation}
        \label{eq:sigmu}
(\dd e^{i+4})^+=\frac{\mu}3\sigma_i^+, \quad i=1,2,3,
\end{equation}
\item  the map $j:\mg'\to\so(\mr)$ satisfies
\begin{equation}\label{eq:jcondteo}
[j(z),j(z')]=\frac23 \mu j(\tau(z)z'), \qquad \forall z,z'\in\mg',
\end{equation}where, for any $z\in\mg'$,  $\tau(z)\in\so(\mg')$ is the endomorphism corresponding to $z\lrcorner (\varphi|_{\mg'})$.
\end{enumerate}
 \end{pro}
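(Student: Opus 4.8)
\textbf{The plan} is to prove the proposition by combining the necessary conditions established in the previous subsection with an explicit curvature computation in a well-adapted basis. First I would dispose of the case $\dim\mg'=2$, showing it admits no instanton of the required type; this is needed because Proposition \ref{pro:caracg2} asserts $\dim\mg'=3$ as part of the characterization. The strategy here is to note that when $\dim\mg'=2$ the span of $\{\alpha_5,\alpha_6,\alpha_7\}$ inside $\Lambda^2\mq^*$ is only $2$-dimensional, which forces a degeneration in the system analogous to \eqref{eq: F_k7_wedge_psi}; I expect the vanishing conditions $(R^\lambda)^\alpha_\beta\wedge\psi=0$ to become overdetermined and inconsistent with the non-abelian hypothesis, much as the sign analysis at the end of Lemma \ref{lm: instanton_condition_dim1} ruled out $\lambda=-1$.

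For the main case $\dim\mg'=3$, I would proceed as follows. By Proposition \ref{pro:instcal}, if $\nabla^\lambda$ is an instanton then $\varphi$ calibrates $\mg'$, so $\mg'$ is itself a calibrated central subspace and Proposition \ref{pro:Sdiag} applies with $\mc=\mg'$. This yields an orthonormal basis in which $\varphi$ has the form \eqref{eq:gcvar} and the self-dual parts of the structure equations are diagonalized: $(\d e^{i+4})^+=d_i\sigma_i^+$ for $i=1,2,3$. The heart of the argument is then to compute all curvature components $(R^\lambda)^\alpha_\beta$ of $\nabla^\lambda=\nabla^g+\tfrac{\lambda}{2}T$ in this basis, using \eqref{eq:Rlam}--\eqref{eq:Rlamab}, and impose $(R^\lambda)^\alpha_\beta\wedge\psi=0$ for every pair. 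These computations are lengthy, so I would relegate the explicit curvature entries and the torsion form $T$ to the separate computational section (Section \ref{sec:appadapt} referenced in the introduction) and here only extract the resulting polynomial system in $\lambda$, the $d_i$, and the anti-self-dual parameters.

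The algebraic analysis of that system should run parallel to Lemma \ref{lm: instanton_condition_dim1}: I expect the diagonal entries $(R^\lambda)^i_{i+4}\wedge\psi=0$ to force $\lambda^2=1$, and a trace-type summation over the remaining equations (the analogue of \eqref{eq: second_eq} and \eqref{eq:d2_d3_d4=0}) to rule out $\lambda=-1$, leaving $\lambda=1$, which is condition (1). With $\lambda=1$ fixed, the instanton equations should collapse to the requirement that all three diagonal coefficients $d_1,d_2,d_3$ be equal to a common value $\tfrac{\mu}{3}$ with $\mu\neq 0$ (giving \eqref{eq:sigmu}) \emph{and} that the anti-self-dual parts $\alpha_i^-$ assemble into the commutator identity \eqref{eq:jcondteo}; indeed, once $\nabla^1=\nabla$ is the characteristic connection with $\nabla\varphi=0$, the curvature lands in $\Lambda^2\otimes\mg_2$ automatically, so the instanton condition $R\wedge\psi=0$ reduces to a symmetry statement on $R$ that I would translate, via \eqref{eq:Rlam} and \eqref{eq:nabla}, into the bracket relation $[j(z),j(z')]=\tfrac23\mu\,j(\tau(z)z')$. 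The converse direction — that conditions (1)--(3) suffice — follows by substituting \eqref{eq:sigmu} and \eqref{eq:jcondteo} back into the curvature and verifying $R^1\wedge\psi=0$ directly, in the spirit of the closing computation of Theorem \ref{teo:gprim1}.

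\textbf{The main obstacle} I anticipate is twofold: first, correctly bookkeeping the anti-self-dual contributions $\alpha_i^-$, since Proposition \ref{pro:Sdiag} only diagonalizes the self-dual part and the $\alpha_i^-$ survive in the curvature and must be shown to organize precisely into \eqref{eq:jcondteo}; and second, verifying that the seemingly rigid pointwise conditions \eqref{eq:sigmu}--\eqref{eq:jcondteo} are jointly realizable by an actual $2$-step nilpotent Lie algebra, i.e.\ that the resulting $j$-map satisfies the Jacobi identity. I expect the cleanest route through the bracket condition to be the identification of $[j(z),j(z')]$ with the curvature endomorphism $R_{z,z'}$ restricted to $\mr$, using the simple form \eqref{eq:nabla} of $\nabla^g$ on the center together with $\nabla T=0$, rather than a brute-force index computation.
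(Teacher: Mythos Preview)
Your overall architecture is right: reduce to a calibrated central $3$-plane, pass to the diagonal basis of Proposition \ref{pro:Sdiag}, compute curvature, and solve the resulting algebraic system. But two points diverge from the paper's proof and are worth flagging.

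First, the paper does \emph{not} treat $\dim\mg'=2$ by a separate ad hoc argument. Instead it observes that in either case one can find a $3$-dimensional subspace $\mc$ with $\mg'\subset\mc\subset\mz$ calibrated by $\varphi$: for $\dim\mg'=3$ this is Proposition \ref{pro:instcal}, while for $\dim\mg'=2$ one uses the transitive $\rG_2$-action on orthonormal pairs plus the fact (from \cite[Proposition 4.4]{dBMR}) that $\tilde e_5=\tilde e_7\lrcorner\tilde e_6\lrcorner\varphi$ is automatically central. Both cases then feed into the same machinery (Assumption \ref{as0} and Section \ref{sec:appadapt}), and the exclusion of $\dim\mg'=2$ falls out afterwards from the bracket condition \eqref{eq:jcond}: if $\mg'\subsetneq\mc$ then $\ker j\neq 0$, and \eqref{eq:jcond} with $\mu\neq 0$ then forces $j\equiv 0$, contradicting non-abelianness (this is Corollary \ref{cor:g=c}). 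Your proposed direct attack on $\dim\mg'=2$ might also work, but you would first need the calibrated $\mc$ to even write down $\alpha_5,\alpha_6,\alpha_7$, and at that point the unified route is shorter.

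Second, your algebraic expectations are off. The equations $(R^\lambda)^\alpha_\beta\wedge\psi=0$ for $\alpha,\beta\in\{5,6,7\}$ do \emph{not} give $\lambda^2=1$; they yield the four-case dichotomy of Lemma \ref{lm:insteigenv}, which includes the exotic values $\lambda=-\tfrac13$ and $\lambda=\sqrt[3]{4}-1$ in addition to $\lambda=\pm1$. Adjoining the equations for $\alpha,\beta\in\{1,\ldots,4\}$ (Lemma \ref{lm:reduc2}) kills all but $\lambda=1$ and $\lambda=-\tfrac13$, and it is only the mixed block $\alpha\in\{1,\ldots,4\}$, $\beta\in\{5,6,7\}$ (your ``diagonal entries $(R^\lambda)^i_{i+4}$'') that finally eliminates $\lambda=-\tfrac13$. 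So the logical flow is more intricate than the trace-summation you anticipate. Also, your heuristic that for $\lambda=1$ the condition $\nabla\varphi=0$ reduces the instanton equation to a ``symmetry statement on $R$'' is not how the argument runs; the paper shows directly that $\nabla^1_u=0$ for $u\in\mr$, which already kills most curvature components, and the remaining $\mc$-$\mc$ block is handled via \eqref{eq:jcondbis}. Finally, your worry about the Jacobi identity is unnecessary: in a $2$-step nilpotent algebra Jacobi is automatic for any choice of $j$.
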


\begin{proof}  Assume first that $\varphi$ is coclosed and $\nabla^\lambda$ is an instanton. Recall that $\mg'\subset\mz$ because of the 2-step nilpotency hypothesis. We claim that there exists a subspace $\mc$ verifying $\mg'\subset\mc\subset\mz$ calibrated by $\varphi$. In fact, 
if $\dim\mg'=3$, we can take $\mc=\mg'$ due to Proposition \ref{pro:instcal}. 
If $\dim \mg'=2$ and $\{z_1,z_2\}$ is an orthonormal basis of $\mg'$, the fact that $\rG_2$ acts transitively on ordered pairs of orthonormal vectors in $\R^7$, implies that there exists an orthonormal basis $\{\tilde e_1, \ldots, \tilde e_7\}$ of $\mg$ such that $\tilde e_6=z_1$ and $\tilde e_7=z_2$ and $\varphi$ has the form in \eqref{eq:gcvar}. In particular, $\varphi$ calibrates $\mc:=\langle  \{\tilde e_5, \tilde e_6,\tilde e_7\}\rangle\supset \mg'$.  Moreover,  by  \cite[Proposition 4.4]{dBMR}, $\tilde e_5=\tilde e_7\lrcorner \tilde e_6\lrcorner \varphi$ is central, i.e. $\tilde e_5\in\mz$ and thus $\mc\subset \mz$ as well.
This shows our claim. 

Consequently, $\mg$ and $\varphi$ satisfy Assumption \ref{as0}, so we can apply the results in Section \ref{sec:appadapt}. In addition, by Proposition \ref{pro:Sdiag}  there exists an orthonormal basis $\{e_1, \ldots, e_7\}$ of $\mg$ such that $e_5, e_6, e_7$ span $\mc$ and, if $\sigma_i^+$ are the self-dual forms in $\Lambda^2\langle\{e^1, \ldots, e^4\}\rangle$ given in \eqref{eq:sigmabasis}, then $\d e^i=0$ for $i=1,\ldots, 4$ and 
\[
(\dd e^{i+4})^+=d_{i+4}\sigma_i^+, \quad i=1,2,3,
\]for some $d_i\in\R$.

Since $\nabla^\lambda$ is an instanton, Lemma \ref{lm:inst} implies that $\dim\mg'=3$ (thus $\mg'=\mc$),  $\lambda=1$ and there is some $\mu\neq 0$  for which
\eqref{eq:sigmu} and \eqref{eq:jcondteo} hold.

For the converse, assume that $\varphi$ is a coclosed $\rG_2$ structure, $\dim\mg'=3$ and (1)--(3) hold. The existence of the basis in (2) implies that $\varphi$ calibrates $\mg'$ and thus Assumption \ref{as0} holds, so we can apply the results in Section \ref{sec:appadapt}. Clearly, (1)--(3) in the statement are equivalent to (1)--(3) Lemma \ref{lm:inst}; the converse of this lemma implies that $\nabla^\lambda$ is an instanton for $\lambda=1$.
\end{proof}

We can now give special characteristics of the coclosed $\rG_2$-structures inducing instantons on $\mg$.
\begin{cor}\label{cor:parallel} Let $\varphi$ be a coclosed $\rG_2$-structure on $\mg$ and let $\nabla^\lambda$ be the connection defined by $\varphi$ via \eqref{eq:np}. If $\nabla^\lambda$ is a $\rG_2$-instanton, then  $\varphi$ 
calibrates $\mg'$, $\varphi$ is not purely coclosed and $\nabla^\lambda\varphi=0$.
\end{cor}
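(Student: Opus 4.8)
The plan is to derive all three conclusions directly from the characterization established in Proposition~\ref{pro:caracg2}, since the hypothesis that $\nabla^\lambda$ is a $\rG_2$-instanton is exactly the ``only if'' assumption of that proposition. Invoking it immediately gives that $\dim\mg'=3$, that $\lambda=1$, and that there is an orthonormal basis $\{e_1,\dots,e_7\}$ with $\mg'=\langle\{e_5,e_6,e_7\}\rangle$, with $\varphi$ of the form \eqref{eq:gcvar}, and with $(\dd e^{i+4})^+=\tfrac{\mu}{3}\sigma_i^+$ for $i=1,2,3$ and some $\mu\neq0$ (condition \eqref{eq:sigmu}). Each of the three claims will then be read off from these normal-form data.

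For the calibration of $\mg'$ I would simply note that in this basis $\varphi$ is given by \eqref{eq:gcvar}, whose $e^{567}$ summand has coefficient $+1$; hence $\varphi(e_5,e_6,e_7)=1$ and $\{e_5,e_6,e_7\}$ is an orthonormal basis of $\mg'$ on which $\varphi$ takes the value $+1$. By the definition of calibration recalled in Section~\ref{sec:cc2}, this shows that $\varphi$ calibrates $\mg'$. For the parallelism $\nabla^\lambda\varphi=0$ I would use that Proposition~\ref{pro:caracg2} forces $\lambda=1$, so that $\nabla^\lambda=\nabla^1$ is precisely the characteristic connection of $\varphi$ constructed by Friedrich and Ivanov \cite{Friedrich2001}, which by construction satisfies $\nabla^1\varphi=0$; equivalently, as already observed after \eqref{eq:np}, $\varphi$ fails to be $\nabla^\lambda$-parallel only when $\lambda\neq1$. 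These two assertions thus require essentially no computation.

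The remaining point, that $\varphi$ is not purely coclosed, is the only one needing a short calculation, and I would carry it out through the torsion form $\tau_0=\tfrac17\star(\varphi\wedge\dd\varphi)$ from \eqref{eq: torsion_forms}. Substituting the normal form $\alpha_{i+4}^+=(\dd e^{i+4})^+=\tfrac{\mu}{3}\sigma_i^+$ into the expression \eqref{eq:pwdp} for $\varphi\wedge\dd\varphi$, and using that the self-dual two-forms \eqref{eq:sigmabasis} satisfy $\sigma_i^+\wedge\sigma_i^+=2\,e^{1234}$, one finds
\[
\varphi\wedge\dd\varphi=2\Big(\sum_{i=1}^3\sigma_i^+\wedge\alpha_{i+4}^+\Big)\wedge e^{567}=\frac{2\mu}{3}\Big(\sum_{i=1}^3\sigma_i^+\wedge\sigma_i^+\Big)\wedge e^{567}=4\mu\,e^{1234567}.
\]
Hence $\tau_0=\tfrac{4\mu}{7}$, and since Proposition~\ref{pro:caracg2} guarantees $\mu\neq0$ we obtain $\tau_0\neq0$. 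As $\varphi$ is coclosed ($\tau_1=\tau_2=0$), this means exactly that $\varphi$ is not purely coclosed.

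I do not anticipate a genuine obstacle: the corollary is a bookkeeping consequence of Proposition~\ref{pro:caracg2}, and the instanton hypothesis does all the heavy lifting by fixing $\dim\mg'=3$, $\lambda=1$ and $\mu\neq0$. The only place where care is needed is the wedge computation in the last paragraph, specifically the identity $\sigma_i^+\wedge\sigma_i^+=2\,e^{1234}$ for the chosen self-dual basis and the fact that the anti-self-dual parts $\alpha_{i+4}^-$ drop out of $\varphi\wedge\dd\varphi$, which is already reflected in \eqref{eq:pwdp}.
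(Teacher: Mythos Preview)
Your proof is correct and follows essentially the same approach as the paper: both deduce calibration and $\nabla^\lambda\varphi=0$ directly from Proposition~\ref{pro:caracg2} (using $\dim\mg'=3$ and $\lambda=1$), and for ``not purely coclosed'' the paper invokes Corollary~\ref{cor:notpcc}, which in turn rests on the identity $\star(\varphi\wedge\dd\varphi)=4\mu$ of \eqref{eq:t0}---precisely the computation you carry out explicitly.
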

\begin{proof}By Proposition \ref{pro:caracg2}, if $\nabla^\lambda$ is an instanton, then $\dim\mg'=3$ and thus $\mg'=\mc$, which is calibrated by $\varphi$. Also, by (1) in that proposition, $\lambda=1$. Recall that, within the 1-parameter family of  connections \eqref{eq:np}, $\lambda=1$ is the (unique) value of the parameter  making $\varphi$ parallel \cite{Friedrich2001}. Finally, by (2) in Proposition \ref{pro:caracg2}, $\varphi$ satisfies the conditions in Section \ref{app:diagonal}, so $\varphi$ is not purely coclosed by Corollary \ref{cor:notpcc}.
\end{proof}

\begin{remark} Given a coclosed $\rG_2$-structure on a 7-manifold $M$, the torsion $T$ in \eqref{eq:tor} of the connection $\nabla^1$ is not, in general, parallel with respect to $\nabla^1$ (see \cite{Friedrich2001,Fr07}). 
In the setting of $2$-step nilpotent Lie groups, this can be seen, for instance, on the seven-dimensional extensions of the Lie algebra called $\mh_3$ in \cite{FIUV11} (which is not the Heisenberg as in our notation). There, $\nabla^1$ is denoted by $\nabla^+$, and it is shown that $\nabla^1T\neq0$  if the commutator is of dimension $>1$.

We will show in the next section, that the instanton condition on 2-step nilpotent Lie algebras forces not only that $\lambda=1$, but also that $T$ is parallel with respect to the instanton connection.
\end{remark}

\section{Examples and  Classification}\label{sec:class}

This section contains the classification of the Lie algebras and the coclosed $\rG_2$-sturctures on them for which the connection $\nabla^1$  is a $\rG_2$-instanton. Recall from Proposition \ref{pro:caracg2}, that $\lambda=1$ is the only possible value for the connection $\nabla^\lambda$ to be an instanton.

We start the section by giving explicit examples of $\rG_2$-instantons and then show that these are the only ones. We point out that Example \ref{ex:heis} was already introduced as an instanton in \cite{FIUV11}; we include it here for the sake of completeness. 

Below, we will denote $\mh_{2m+1}$ the Heisenberg Lie algebra of dimension $2m+1$, that is, the one having a dual basis $\{e^1, \ldots, e^{2m+1}\}$ satisfying
\begin{eqnarray*}
   \left\{ 
    \begin{array}{l}
    \d e^i=0,\quad i=1, \ldots, m,\\
\dd e^7=e^{12}+e^{34}+\cdots+e^{2m-1\,2m}.
    \end{array}
    \right.
\end{eqnarray*}
Also, we write $\mh_\H$ and $\R\oplus\mn_{3,2}$, respectively, for the quaternionic Heisenberg and the direct sum of the free 2-step nilpotent Lie algebra with an abelian factor, each of which  having a dual basis $\{e^1, \ldots,e^7\}$ such that the differentials satisfy
\begin{eqnarray*}
\mh_\H:  &&
    \d e^i=0,\quad i=1, \ldots, 4,\quad
\dd e^5= e^{13}-e^{24},  \dd e^6=-e^{14}-e^{23}, \,\dd e^7=e^{12}+e^{34},\\
\R\oplus\mn_{3,2} :&&
     \d e^i=0,\quad i=1, \ldots, 4,\quad
 \dd e^5=  e^{23},\,  \dd e^6= e^{24},\, \dd e^7= e^{34}.
     \end{eqnarray*}

\begin{ex}\label{ex:heis} Let $\mg$ be the 7-dimensional 2-step nilpotent Lie algebra with a basis $\{ e^1, \ldots, e^7\}$ of $\mg^*$ with structure equations
\begin{eqnarray}
   \left\{ 
    \begin{array}{l}
    \d e^i=0,\quad i=1, \ldots, 7,\\
\dd e^7=a (e^{12}-e^{56})+b(e^{34}-e^{56}),
    \end{array}
    \right.
  \label{eq:heis} 
\end{eqnarray}
    for some $a,b\in\R$ not simultaneously vanishing. The Lie algebra $\mg$ has commutator of dimension 1. Moreover, depending on the parameters $a,b$, it is isomorphic to either $\mh_7$ or $\R^4\oplus \mh_5$. The parameters $a,b$ correspond to families of metrics on these two Lie algebras. 
   
Let $\varphi$ be the $\rG_2$-structure induced by the above basis, namely, 
\begin{equation}
	 \varphi=e^{127}+e^{347}+e^{567} + e^{135} -e^{146} -e^{236}-e^{245}.
\end{equation}Hence the basis above becomes orthonormal with respect to the induced metric. Canonical computations using \eqref{eq:heis} show that $\varphi$ is purely coclosed. Moreover, the basis and the $\rG_2$-structure satisfy Lemma \ref{lm:can1}. Then, by Theorem \ref{teo:gprim1} the connection $\nabla^1$ defined in \eqref{eq:np} by $\varphi$ with $\lambda=1$ is a $\rG_2$-instanton. 

The torsion $T$ of $\nabla^1$ is given in \eqref{eq:Tdim1}, which becomes
\[
T=(a (e^{12}-e^{56})+b (e^{34}-e^{56}))\wedge e^7=\d e^7\wedge e^7. 
\]Explicitly, the components of the matrix of 1-forms given by the connection form \eqref{eq:gprim1psi} are
\[
(\Psi_1)^1_2=ae^7,\quad (\Psi_1)^3_4=be^7,\quad (\Psi_1)^5_6=-(a+b)e^7.
\] Notice that $\nabla_u^1=0$ for all $u\bot e_7$. Moreover, since $\Psi_1\wedge \Psi_1=0$, the curvature of $\nabla^1$ is $R^1=\d \Psi_1$, so the non-zero curvature components are (see \eqref{eq:gprim1R})
\begin{eqnarray*}
   & (R^1)^1_2=a (a e^{12}+be^{34}-(a+b)e^{56}),\quad (R^1)^3_4=b(a e^{12}+be^{34}-(a+b)e^{56}),\\ &(R^1)^5_6=-(a+b)(a e^{12}+be^{34}-(a+b)e^{56}).
\end{eqnarray*}Proceeding as in Theorem \ref{teo:gprim1}, one can easily check that $R^1\wedge \psi=0$. Also, from the last expression, we obtain that the curvature tensor satisfies $R^1(x,y,z,w)=R^1(z,w,x,y)$  for every $x,y,z,w\in\mg$. In addition, the holonomy of this connection 
$\mathfrak{hol}(\nabla^1)=\{(R^1)^i_j:i,j=1, \ldots, 7\}$ is 1-dimensional and contained in $\su(3)\subset\so(\mg)$.
\end{ex}

\begin{ex}\label{ex:qheis} Consider the 2-step nilpotent Lie algebra $\mg$ with a basis $\{e^1,\ldots, e^7\}$ of $\mg^*$ for which the structure equations  are 
\begin{eqnarray}
   \left\{ 
    \begin{array}{l}
    \d e^i=0,\quad i=1, \ldots, 4,\\
\dd e^5=\nu (e^{13}-e^{24}), \, \dd e^6=\nu(-e^{14}-e^{23}), \,\dd e^7=\nu(e^{12}+e^{34}),
    \end{array}
    \right.
  \label{eq:qheis} 
\end{eqnarray}
for some $\nu\neq 0$. 
One should notice that for any $\nu\neq 0$, the Lie algebra with structure constants \eqref{eq:qheis} is isomorphic to the quaternionic Heisenberg $\mh_\H$. The parameter $\nu$  corresponds to a 1-parameter family of metrics on $\mh_\H$.

Let $\varphi$ be the $\rG_2$-structure \eqref{eq:gcvar} induced by the above basis so that $\{e^1, \ldots, e^n\}$ becomes an orthonormal basis. Also, if  $\{\sigma_i^+\}_{i=1}^3$ denotes the basis self-dual forms \eqref{eq:sigmabasis} in $\Lambda^2\langle \{e_1, \ldots,e_4\}\rangle^*$, $\varphi$ has the form \eqref{eq:phibody} and thus its Hodge dual is \eqref{eq:stphi}. Using these expressions together with  \eqref{eq:n32}, one can easily show that $\varphi$ is coclosed. 

We claim that the connection $\nabla^1$ defined by $\varphi$ via \eqref{eq:np}  is a $\rG_2$-instanton for  $\lambda=1$. For this, we will show that the conditions in  Proposition \ref{pro:caracg2} are satisfied.

It is straightforward that $\mg'$ is 3-dimensional and spanned by $e_5,e_6,e_7$, thus $\varphi$ calibrates $\mg'$. Moreover,  it is clear from \eqref{eq:qheis} that 
\[
\d e^{i+4}=\nu\sigma_i^+=(\d e^i)^+, \qquad i=1,2,3.
\] So (2) in Proposition \ref{pro:caracg2} holds for $\mu=3\nu\neq 0$. Finally, using \eqref{eq:jc}, \eqref{eq:qheis} and the fact that $\{e_1, \ldots, e_7\}$ is an orthonormal basis, one readily computes
\[
j(e_5)=\nu
    \left(
    \begin{smallmatrix}
    0&0&1&0\\
    0&0&0&-1\\
    -1&0&0&0\\
    0&1&0&0
    \end{smallmatrix}\right),\quad
j(e_6)=\nu
    \left(
    \begin{smallmatrix}
    0&0&0&-1\\
    0&0&-1&0\\
    0&1&0&0\\
    1&0&0&0
    \end{smallmatrix}\right),\quad
j(e_7)=\nu
    \left(
    \begin{smallmatrix}
    0&1&0&0\\
    -1&0&0&0\\
    0&0&0&1\\
    0&0&-1&0
    \end{smallmatrix}\right),
\]from which it is easy to verify that (3) in Proposition \ref{pro:caracg2} holds.  Therefore, $\nabla^1$ is a $\rG_2$-instanton as claimed.

For the sake of completeness, we give explicit expressions for the torsion, the connection $\nabla^1$ and its curvature, which can be obtained from the formulas in Section \ref{sec:appadapt}, since Assumption \ref{as0} holds. The torsion $T$ defined by $\varphi$ via \eqref{eq:tor} is given in \eqref{eq:tor2}. Explicitly,
\[
T=\nu(-4 e^{567}+ \sigma_1^+\wedge e^5+\sigma_2^+\wedge e^6+ \sigma_3^+\wedge e^7)=-4\nu e^{567}+ \sum_{i=5}^7\d e^i\wedge e^i.
\] Moreover, by Lemma \ref{lm:npend}, the connection $\nabla^1$ verifies  $\nabla^1_u=0$ for all $u\in\mq$ and $\nabla_z^1=-j(z)-2\nu\tau(z)$, for all $z\in\mg'$, where $\tau(z)\in\so(\mg')$ is the skew-symmetric map corresponding to $z\lrcorner (\varphi|_{\mg'})$. Hence, the connection 1-form  of $\nabla^1$ as presented in \eqref{eq:psilam} is
{\small\[
\Psi_1=\nu
\left(
\begin{array}{c|c}
\begin{array}{cccc}
   0&-e^7&-e^5&e^6\\
    e^7&0&e^6&e^5\\
    e^5&-e^6&0&-e^7\\
    -e^6&-e^5&e^7&0\\
\end{array}&0\\
\hline
0&\begin{array}{ccc}
 0&2e^7&-2e^6\\
 -2e^7&0&2e^5\\
 2e^6&-2e^5&0
\end{array}
\end{array}
\right)
\]}
and the non-zero curvature components of the curvature tensor $R^1=\d \Psi_1+\Psi_1\wedge \Psi_1$:
\begin{align*}
	(R^1)^1_{2}=(R^1)^3_{4}=-\frac{1}{2}(R^1)^5_{6}=\nu^2\left(-e^{12}-e^{34}+2e^{56}\right),\\
	-(R^1)^1_{3}=(R^1)^2_{4}=\frac{1}{2}(R^1)^6_{7}=\nu^2\left(e^{13}-e^{24}-2e^{67}\right),\\
	(R^1)^1_{4}=(R^1)^2_{3}=-\frac{1}{2}(R^1)^5_{7}=\nu^2\left(-e^{14}-e^{23}+2e^{57}\right).
\end{align*}
The Hodge dual $\psi$ of $\varphi$ satisfies \eqref{eq:stphi}, so one can easily check that $R^1\wedge \psi=0$. 
Also, from this expression we obtain that the curvature tensor satisfies $R^1(x,y,z,w)=R^1(z,w,x,y)$  for every $x,y,z,w\in\mg$. 
In addition, one can check that   $\mathfrak{hol}(\nabla^1)\simeq\su(2)$.
\end{ex}

\begin{ex}\label{ex:n32} Let $\mg$ be the 2-step nilpotent Lie algebra with basis $\{e^1,\ldots, e^7\}$ of $\mg^*$ for which the structure equations  are 
\begin{eqnarray}
   \left\{ 
    \begin{array}{l}
    \d e^i=0,\quad i=1, \ldots, 4,\\
 \dd e^5= -2\nu e^{24},\,  \dd e^6= -2\nu e^{23},\, \dd e^7= 2\nu e^{34},
    \end{array}
    \right.
  \label{eq:n32} 
\end{eqnarray}
for some $\nu\neq 0$. 
For every $\nu$, the Lie algebra $\mg$ is isomorphic to $\R\oplus\mn_{3,2}$, where $\mn_{3,2}$ is the free 2-step nilpotent Lie algebra on 3-generators. As before, the parameter $\nu$  corresponds to a 1-parameter family of metrics on $\R\oplus\mn_{3,2}$.

Let $\varphi$ be the $\rG_2$-structure \eqref{eq:gcvar} defined by the basis $\{e^1, \ldots, e^7\}$, which thus becomes orthonormal with respect to the induced metric. Clearly, $\varphi$ has the form \eqref{eq:phibody}, where $\sigma_i^+$ are as in \eqref{eq:sigmabasis}, and its Hodge dual is \eqref{eq:stphi}. 
Using these expressions together with  \eqref{eq:n32}, one can easily show that $\varphi$ is coclosed. The Lie algebra $\mg$, with the $\rG_2$-structure, verifies the conditions in  Proposition \ref{pro:caracg2} as we show next.

The commutator $\mg'$ is spanned by $e_5,e_6,e_7$ and thus $\varphi$ calibrates $\mg'$. Notice that, contrary to Example \ref{ex:qheis}, $\dim\mz=4$ since $\mz=\langle e_1\rangle\oplus \mg'$. Let $\sigma_i^+$ be the basis of self-dual forms in \eqref{eq:sigmabasis} and consider the next basis of anti-self-dual forms:
\begin{equation*}
	\sigma_1^-=e^{13}+e^{24}, \quad \sigma_2^-=e^{14}-e^{23},\quad \sigma_3^-=e^{12}-e^{34}.
\end{equation*} 
Then,  by \eqref{eq:n32} we have
\[
\d e^{5}=\nu(\sigma_1^+-\sigma_1^-),\quad \d e^{6}=\nu(\sigma_2^++\sigma_2^-)\quad \d e^{7}=\nu(\sigma_3^+-\sigma_3^-),
\]which implies, 
\[
(\d e^{i+4})^+=\nu\sigma_i^+, \qquad i=1,2,3,
\] so (2) in Proposition \ref{pro:caracg2} holds for $\mu=3\nu\neq 0$. Using \eqref{eq:n32} and the orthonormal basis $\{e_1, \ldots, e_7\}$, we get that the matrices of the $j(z)$ map in \eqref{eq:jc} are
\[
j(e_5)=2\nu
    \left(
    \begin{smallmatrix}
    0&0&0&0\\
    0&0&0&-1\\
    0&0&0&0\\
    0&1&0&0
    \end{smallmatrix}\right),\quad
j(e_6)=2\nu
    \left(
    \begin{smallmatrix}
    0&0&0&0\\
    0&0&-1&0\\
    0&1&0&0\\
    0&0&0&0
    \end{smallmatrix}\right),\quad
j(e_7)=2\nu
    \left(
    \begin{smallmatrix}
    0&0&0&0\\
    0&0&0&0\\
    0&0&0&1\\
    0&0&-1&0
    \end{smallmatrix}\right).
\]Notice that $j(z)e_1=0$ for all $z\in\mg'$, because $e_1\in\mz\cap(\mg')^\bot$. With the above expressions, it is easy to verify that (3) in Proposition \ref{pro:caracg2} holds.  Therefore, $\nabla^1$ is indeed a $\rG_2$-instanton.

As in Example \ref{eq:qheis} we can give explicitly the torsion, the connection and curvature formulae by using the formulas in Section \ref{sec:appadapt}. The torsion $T$ defined is the one given in \eqref{eq:tor2}, namely,
\[
T=2\nu(-2e^{567}- e^{245}-e^{236}+e^{347})=-4\nu e^{567}+ \sum_{i=5}^7\d e^i\wedge e^i.
\] Also, by Lemma \ref{lm:npend}, $\nabla^1_u=0$ for all $u\in\mq$ and $\nabla_z^1=-j(z)-2\nu\tau(z)$, for all $z\in\mg'$. Hence, the connection 1-form  $\Psi_1$ of $\nabla^1$ given in \eqref{eq:psilam} is
{\small\[
\Psi_1=2\nu
\left(
\begin{array}{c|c|c}
0& 0&0\\
\hline
0& \begin{array}{ccc} 
 0&e^6&e^5\\
 -e^6&0&-e^7\\
 -e^5&e^7&0
\end{array}
&0\\
\hline
0&0&\begin{array}{ccc}
     0&e^7&-e^6\\
    -e^7&0&e^5\\
    e^6&-e^5&0
\end{array}
\end{array}
\right)
\]}
and the non-zero curvature components of the curvature tensor $R^1=\d \Psi_1+\Psi_1\wedge \Psi_1$:
\begin{align*}
	(R^1)^2_{3}=-(R^1)^5_{7}=4\nu^2(-e^{23}+e^{57}),\\
 (R^1)^2_{4}=(R^1)^6_{7}=4\nu^2(-e^{24}-e^{67}),\\
 (R^1)^3_{4}=-(R^1)^5_{6}=4\nu^2(-e^{34}+e^{56}).
\end{align*}
Since $\psi$ satisfies \eqref{eq:stphi},  one can easily check that $R^1\wedge \psi=0$. Also, the curvature of $\nabla^1$ satisfies the symmetry $R^1(x,y,z,w)=R^1(z,w,x,y)$  for every $x,y,z,w\in\mg$. Moreover, it follows that  $\mathfrak{hol}(\nabla^1)\simeq\so(3)$.
\end{ex}

\begin{remark}\label{rem:nonis}
Within each family of Lie algebras with $\rG_2$-structures appearing in Examples \ref{ex:heis}, \ref{ex:qheis} and \ref{ex:n32}, infinitely many are non-equivalent in the following sense. 

If there exists a Lie algebra automorphism between two 2-step nilpotent Lie algebras with $\rG_2$-structures, $f:(\mg_1,\varphi_1)\to(\mg_2,\varphi_2) $, then $f(\mg_1')=\mg_2'$. Moreover, $f$ is an isometry of the metrics $g_i$, $i=1,2$, induced on $\mg_i$. Therefore, $f((\mg_1')^\bot)=(\mg_2')^\bot$ and one can easily show that
\[
f^{-1}j(f(z))f=j(z), \qquad \forall z\in\mg_1',
\]which implies that the symmetric endomorphisms $j(z)^2$ and $j(f(z))^2$ have the same eigenvalues. 

On the previous examples, the parameters $a^2,b^2, (a+b)^2$ or $-\nu^2$ are corresponding eigenvalues of $j(z)^2$ for unit length vectors $z\in\mg'$. So when these values are different for two Lie algebras in the same family of each example, there is no Lie algebra automorphism between them preserving the $\rG_2$-structure.
\end{remark}

\begin{teo} \label{teo:classif}
On a $2$-step nilpotent Lie algebra $\mg$, there exists a coclosed $\rG_2$-structure $\varphi$ for which $\nabla^1$ is a $\rG_2$-instanton if and only if  there exists a basis $\{e^1, \ldots, e^7\}$ of $\mg^*$ such that $\varphi$ has the form \eqref{eq:gcvar} and the structure constants in that basis satisfy one of the following conditions:
\begin{enumerate}
    \item $\d e^i=0$ for $i=1, \ldots, 6$ and $\d e^7=a (e^{12}-e^{56})+b(e^{34}-e^{56})$, for some $a,b$ non simultaneously zero.
    \item $\d e^i=0$ for $i=1, \ldots, 4$ and $\dd e^5=\nu (e^{13}-e^{24})$, $\dd e^6=\nu(-e^{14}-e^{23})$, $\dd e^7=\nu(e^{12}+e^{34})$, for some $\nu\neq 0$.
     \item $\d e^i=0$ for $i=1, \ldots, 4$ and $\dd e^5=-2\nu e^{24}$, $\dd e^6=-2\nu e^{23}$, $\dd e^7=2\nu e^{34}$, for some $\nu\neq 0$.
\end{enumerate}
\end{teo}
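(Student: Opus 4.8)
The plan is to prove both implications and to let the two directions draw on different parts of the paper. The converse (sufficiency) is immediate: Examples \ref{ex:heis}, \ref{ex:qheis} and \ref{ex:n32} exhibit, for each of the structures (1), (2) and (3), a coclosed $\rG_2$-structure of the form \eqref{eq:gcvar} whose connection $\nabla^1$ is a $\rG_2$-instanton, so once a basis realizing one of (1)--(3) is given we are done. Hence the substance is the forward (necessity) direction, which I would organize according to $\dim\mg'\in\{1,2,3\}$, the only possibilities since $\dim\mg=7$ forces $\dim\mg'\le 3$.

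For $\dim\mg'=1$ I would invoke Theorem \ref{teo:gprim1}: if $\nabla^\lambda$ is an instanton then $\lambda=1$ and $\varphi$ is purely coclosed, i.e. $\mu=a+b+c=0$ in \eqref{eq: structure_equation_dim_1}; substituting $c=-(a+b)$ into $\d e^7=ae^{12}+be^{34}+ce^{56}$ gives exactly case (1). For $\dim\mg'=2$, Proposition \ref{pro:caracg2} shows $\nabla^\lambda$ is never an instanton, so this case is vacuous. It remains to treat $\dim\mg'=3$, where Proposition \ref{pro:caracg2} already supplies $\lambda=1$, an adapted orthonormal basis with $\mg'=\langle\{e_5,e_6,e_7\}\rangle$ and $\varphi$ as in \eqref{eq:gcvar}, the normalization \eqref{eq:sigmu} of the self-dual parts $(\d e^{i+4})^+=\tfrac{\mu}{3}\sigma_i^+$, and the bracket relation \eqref{eq:jcondteo}; the remaining task is to extract precisely the two structures (2) and (3) from this data.

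The key step is to use, on the $4$-dimensional factor $\mr=(\mg')^\perp$, the splitting $\so(\mr)=\so(\mr)_+\oplus\so(\mr)_-$ into self-dual and anti-self-dual parts, and to write $j(z)=j(z)^++j(z)^-$ correspondingly. Since $\d e^{i+4}$ is identified with $-j(e_{i+4})$, condition \eqref{eq:sigmu} says $j(e_{i+4})^+=-\tfrac{\mu}{3}S_i$, where $S_i$ is the endomorphism of $\sigma_i^+$ from \eqref{eq:sigmabasis}. Using the relations $[S_i,S_j]=-2\,\epsilon_{ijk}S_k$ and that $\tau$ realizes the cross product $e_5\times e_6=e_7$ cyclically on $\mg'$, a direct check shows the self-dual parts $j(e_{i+4})^+$ automatically satisfy \eqref{eq:jcondteo}; because $\so(\mr)_+$ and $\so(\mr)_-$ commute, \eqref{eq:jcondteo} reduces to the single system $[j(e_{i+4})^-,j(e_{j+4})^-]=\tfrac{2}{3}\mu\,\epsilon_{ijk}\,j(e_{k+4})^-$ on the anti-self-dual parts. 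Rescaling, $e_{i+4}\mapsto\tfrac{3}{2\mu}j(e_{i+4})^-$ is a Lie algebra homomorphism from $(\mg',\times)\cong\su(2)$ into $\so(\mr)_-\cong\su(2)$, hence, by simplicity of $\su(2)$, either zero or an isomorphism. The zero case gives $\d e^{i+4}=\tfrac{\mu}{3}\sigma_i^+$, which is case (2) with $\nu=\mu/3$, while the isomorphism case will become case (3).

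The remaining work, and the step I expect to be the main obstacle, is normalizing the isomorphism case to the exact equations \eqref{eq:n32}. Here I would exploit the stabilizer of the pair $(\varphi,\mg')$ inside $\rG_2$, a copy of $\SO(4)$ acting on $\mr$ whose induced action on self-dual forms is tied to its action on $\mg'$ (so that $\sum_i\sigma_i^+\wedge e^{i+4}$ is preserved), while the complementary $\so(\mr)_-$-action on anti-self-dual forms is free. Writing $(\d e^{i+4})^-=\sum_j c_{ij}\sigma_j^-$, this yields the freedom $c\mapsto R\,c\,(R')^{T}$ with $R$ acting diagonally on the $\sigma_i^+$, on the $e^{i+4}$ and trivially on the $\sigma_j^-$, and $R'$ acting only on the $\sigma_j^-$, ranging independently over $\SO(3)$; crucially $R$ fixes the already-diagonal self-dual data \eqref{eq:sigmu}. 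Diagonalizing $c$ by this singular-value freedom and substituting into the reduced bracket system, which becomes $c_ic_j=\tfrac{\mu}{3}c_k$ cyclically, forces $c_1=c_2=c_3=\mu/3$ once $c\neq 0$; a sign normalization within the same $\so(\mr)_-$-orbit then lands on the anti-self-dual frame of \eqref{eq:n32}. The delicate points are the orientation and sign bookkeeping in identifying $\so(\mr)_\pm$ with \eqref{eq:sigmabasis}, and the verification that the residual $\rG_2$-symmetry acts exactly as claimed so that the normal form is genuinely the representative (3). With these settled, the dimension analysis shows (1)--(3) exhaust all possibilities, and together with the examples this establishes the equivalence.
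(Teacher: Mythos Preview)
Your proposal is correct, and for the converse and the cases $\dim\mg'\in\{1,2\}$ it coincides with the paper's proof. The only genuine divergence is in how you extract case~(3) when $\dim\mg'=3$ and the anti-self-dual parts $j(z)^-$ do not all vanish. The paper does not normalize the coefficient matrix $(c_{ij})$ via the $\SO(4)$-stabilizer action as you propose; instead it invokes Corollary~\ref{cor:comker}, which observes that the map $j^+{}^{-1}\circ j^-:\so(\mr)^+\to\so(\mr)^-$ is an automorphism of $\im\H$, hence inner, say conjugation by $a\in\Sp(1)$, and then the vector $x=a^{-1}\in\H\simeq\mr$ is a common null vector of all $j(z)$. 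One then uses only the $\SO(4)$-freedom to move $x$ to $e_1$, after which $e_1\lrcorner\d e^i=0$ combined with $(\d e^{i+4})^+=\tfrac{\mu}{3}\sigma_i^+$ forces the structure equations of~(3) directly, with no further computation.

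Your route via a singular-value normalization of $c$ and the cyclic system $c_ic_j\propto c_k$ is perfectly valid, and your description of how the $\SO(4)$-stabilizer acts (diagonally on $\sigma_i^+$ and $e^{i+4}$, independently on $\sigma_j^-$) is accurate. The trade-off is that your approach requires tracking the sign conventions in $\so(\mr)^-$ and absorbing the residual sign ambiguities $c_i\mapsto\pm c_i$ (with an even number of flips) into a further $\SO(3)$-move, exactly the bookkeeping you flag as delicate; the paper's common-null-vector argument sidesteps this entirely, since once $j(z)e_1=0$ the anti-self-dual parts are pinned down by the self-dual ones without any diagonalization.
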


\begin{proof}
Suppose that $\nabla^1$ is a $\rG_2$-instanton for some coclosed $\rG_2$-structure $\varphi$ on $\mg$. 

If $\dim \mg'=1$, then by Lemma \ref{lm:can1} there exists an orthonormal basis $\{e^1, \ldots, e^7\}$ of $\mg^*$ for which $\varphi$ has the desired form and the structure constant verify \eqref{eq: structure_equation_dim_1}. However, since $\nabla^1$ is an instanton, Theorem \ref{teo:gprim1} implies that  $c=-(a+b)$. Hence $\mg$ satisfies (1) in the statement.

Now assume that $\dim\mg'\geq 2$, then by Proposition \ref{pro:caracg2}, we actually have $\dim \mg'=3$, and also $\varphi$ calibrates $\mg'$. Moreover, (2) in Proposition \ref{pro:caracg2} implies that there exists an orthonormal basis $\{e_1, \ldots, e_7\}$ of $\mg$ such that $\mg'$ is spanned by $\{e_5,e_6,e_7\}$, $\varphi$ satisfies \eqref{eq:gcvar} and 
\begin{equation}\label{eq:deimu}
(\d e^i)^+=\frac{\mu}3\sigma_i^+,
\end{equation}for some $\mu\neq 0$, where $\sigma_i^+$ is the basis in \eqref{eq:sigmabasis}.
In particular, Assumption \ref{as0} holds and the results in Section \ref{sec:appadapt} apply to $\varphi$.

If $\d e^i=(\d e^i)^+$ for $i=5,6,7$, it is straightforward that (2) in the statement holds for $\nu=\frac{\mu}3$. 
Assume then that there is some $i\in\{5,6,7\}$ for which $\d e^i\neq (\d e^i)^+$, that is, $(\d e^i)^-\neq 0$.  By Corollary \ref{cor:comker}, there exists $0\neq x\in\mr$ such that $j(z)x=0$ for all $z\in\mg'$. The subgroup of $\rG_2$ fixing the calibrated subspace $\mg'$ acts like $\SO(4)$ in $(\mg')^\bot$, so we may assume that $x=e_1$ and  \eqref{eq:deimu} still holds. Now, $j(z)e_1=0$ implies $e_1\lrcorner \d e^i=0$ which together with \eqref{eq:deimu} implies 
\[
\d e^5=-2\frac{\mu}3 e^{24},\quad \d e^6=-2\frac{\mu}3e^{23},\quad  \d e^7=2\frac{\mu}3 e^{34}.
\]Hence (3) holds taking $\nu=\frac{\mu}3$.

The converse follows from Examples \ref{ex:heis}, \ref{ex:qheis} and \ref{ex:n32}.
\end{proof}

By looking at the isomorphism classes of Lie algebras satisfying one of the conditions in Theorem \ref{teo:classif}, we get the following.
\begin{cor}
    A $2$-step nilpotent Lie algebra $\mg$
 of dimension $7$ admits a coclosed $\rG_2$-structure
such that $\nabla^1$ is a $\rG_2$ instanton if and only if $\mg$ is isomorphic to either $\R^2\oplus \mh_{5}$, $\mh_{7}$, $\mh_\H$ or $\R\oplus\mn_{3,2}$.
\end{cor}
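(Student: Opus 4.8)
The plan is to read the answer directly off Theorem \ref{teo:classif}: that result asserts that $\mg$ carries a coclosed $\rG_2$-structure with $\nabla^1$ an instanton precisely when $\mg$ admits a basis in which $\varphi$ has the form \eqref{eq:gcvar} and the differentials fall into one of the three families (1)--(3). Hence it suffices to identify the isomorphism type of the abstract Lie algebra underlying each family and to check that the four algebras in the statement are exactly those that arise. The ``if'' direction is then immediate, since each of $\R^2\oplus\mh_5$, $\mh_7$, $\mh_\H$ and $\R\oplus\mn_{3,2}$ will be exhibited as a member of one of the families, whence Theorem \ref{teo:classif} produces the desired structure.

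For families (2) and (3) there is nothing to compute: as recorded in Examples \ref{ex:qheis} and \ref{ex:n32}, for every $\nu\neq0$ the structure constants in (2) define an algebra isomorphic to the quaternionic Heisenberg algebra $\mh_\H$, while those in (3) define $\R\oplus\mn_{3,2}$; the parameter $\nu$ only rescales the metric and does not change the isomorphism class. The work, such as it is, lies in family (1), where $\dim\mg'=1$ and the single structure form is $\d e^7=a(e^{12}-e^{56})+b(e^{34}-e^{56})=a\,e^{12}+b\,e^{34}-(a+b)e^{56}$. I would invoke the elementary fact that a $7$-dimensional $2$-step nilpotent Lie algebra with one-dimensional commutator is determined up to isomorphism by the rank $2k$ of this form on $\mr$, being isomorphic to $\mh_{2k+1}\oplus\R^{6-2k}$. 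The rank equals twice the number of nonzero coefficients among $a$, $b$, $-(a+b)$; since $a,b$ are not both zero, at least two of these are nonzero, so $2k\in\{4,6\}$. When all three coefficients are nonzero (e.g.\ $a=b=1$) the rank is $6$ and $\mg\cong\mh_7$, while when exactly one of $a$, $b$, $a+b$ vanishes the rank is $4$ and $\mg\cong\R^2\oplus\mh_5$. Both cases occur, so family (1) contributes precisely $\mh_7$ and $\R^2\oplus\mh_5$.

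Collecting the three families shows that the algebras admitting such a structure are exactly $\R^2\oplus\mh_5$, $\mh_7$, $\mh_\H$ and $\R\oplus\mn_{3,2}$, which is the assertion. There is no serious obstacle beyond the rank bookkeeping in family (1); the one point worth double-checking is that rank $2$ cannot arise, which is exactly what excludes $\mh_3\oplus\R^4$ and is consistent with the remark following the second main theorem that this naturally reductive algebra admits no coclosed $\rG_2$-structure with instanton characteristic connection.
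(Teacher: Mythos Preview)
Your proposal is correct and follows the paper's own approach: the corollary is stated immediately after Theorem~\ref{teo:classif} with only the remark ``By looking at the isomorphism classes of Lie algebras satisfying one of the conditions in Theorem~\ref{teo:classif}'', so the identification of each family with one of the four listed algebras is left implicit there. You have simply filled in the details the paper omits, in particular the rank-of-$\d e^7$ argument distinguishing $\mh_7$ from $\R^2\oplus\mh_5$ in family~(1), and correctly noted that the impossibility of rank~$2$ excludes $\mh_3\oplus\R^4$.
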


A simply connected complete Riemannian manifold is naturally reductive if there exists a metric connection $\nabla^c$ such that the Riemann curvature tensor and the torsion of $\nabla^c$ are parallel with respect to $\nabla^c$ \cite[Theorem 2.3]{TrVa84}. We will show next that the Riemannian manifolds arising from the $\rG_2$-instantons on the 2-step nilpotent Lie groups are naturally reductive with connection $\nabla^c=\nabla^1$.

\begin{teo}Let $G$ be a simply connected $2$-step nilpotent Lie group of dimension $7$, let $\varphi$ be a left-invariant coclosed $\rG_2$-structure and denote by $g$ and $R$, respectively, the left-invariant metric induced by $\varphi$ and its Riemann curvature tensor. If the connection $\nabla^1$ defined by $\varphi$ is a $\rG_2$-instanton, then $\nabla^1T=\nabla^1 R=0$ and therefore $(G,g)$ is naturally reductive. 
\end{teo}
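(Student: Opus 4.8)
The plan is to reduce the two parallelism statements to a purely algebraic invariance condition, using the classification already in hand. By Theorem \ref{teo:classif}, together with the explicit connection forms computed in Examples \ref{ex:heis}, \ref{ex:qheis} and \ref{ex:n32}, whenever $\nabla^1$ is a $\rG_2$-instanton there is an adapted orthonormal basis in which the connection $1$-form $\Psi_1$ of $\nabla^1$ vanishes on $\mr=(\mg')^\perp$; that is, $\nabla^1_u=0$ for all $u\in\mr$, while $\nabla^1_z=:\Phi(z)\in\so(\mg)$ for $z\in\mg'$. Since $g$, $\varphi$, the torsion $T$ and the Riemann tensor $R$ are left-invariant, for any such tensor $S$ and any $x\in\mg$ one has the algebraic identity $\nabla^1_x S=-\Psi_1(x)\cdot S$, where $\cdot$ denotes the natural derivation action of $\so(\mg)$ on tensors (the left-invariant function $x\mapsto S(Y_1,\dots,Y_k)$ is constant, so only the terms in $\Psi_1(x)Y_i$ survive). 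Because $\Psi_1$ kills $\mr$, the whole theorem reduces to showing $\Phi(z)\cdot T=0$ and $\Phi(z)\cdot R=0$ for every $z\in\mg'$.

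The crux — and the only place the instanton hypothesis really enters — is that each $\Phi(z)$ is a skew-symmetric \emph{derivation} of $\mg$. For $\dim\mg'\geq2$ we have $\Phi(z)=-j(z)-\tfrac23\mu\,\tau(z)$, and a direct computation using $\langle[x,y],z'\rangle=\langle j(z')x,y\rangle$, the skew-symmetry of $j$ and $\tau$, and $\mg'\subset\mz$ shows that the Leibniz rule $\Phi(z)[x,y]=[\Phi(z)x,y]+[x,\Phi(z)y]$ is \emph{equivalent} to $[j(z),j(z')]=\tfrac23\mu\, j(\tau(z)z')$, which is precisely condition (3) of Proposition \ref{pro:caracg2}; the brackets with $\mg'$ hold trivially by $2$-step nilpotency. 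For $\dim\mg'=1$ the same Leibniz identity follows at once from $A=-j(e_7)$ and the skew-symmetry of $A$. Hence $\Phi(z)\in\Der(\mg)\cap\so(\mg)$ for every $z\in\mg'$.

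Granting this, both vanishing statements follow formally. Since $\lambda=1$ we have $\nabla^1\varphi=0$, so $\Phi(z)\cdot\varphi=\nabla^1_z\varphi=0$. A derivation of $\mg$ induces an action on $\Lambda^\bullet\mg^*$ commuting with the Chevalley--Eilenberg differential, and skew-symmetry makes $\Phi(z)$ commute with $\star$ and fix $g$. Applying these to the torsion \eqref{eq:tor}, namely $T=\tfrac16\star(\d\varphi\wedge\varphi)\,\varphi-\star\d\varphi$, we get $\Phi(z)\cdot\d\varphi=\d(\Phi(z)\cdot\varphi)=0$, the scalar $\star(\d\varphi\wedge\varphi)$ is fixed, and therefore $\Phi(z)\cdot T=0$. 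For the curvature, a skew-symmetric derivation integrates to a one-parameter group of isometric automorphisms of $(G,g)$, which preserves the Levi-Civita connection and hence its curvature; differentiating at the identity gives $\Phi(z)\cdot R=0$. Equivalently, one applies the Leibniz rule to Koszul's formula \eqref{eq:Koszul} to see that $\Phi(z)$ commutes with $\nabla^g$ as a derivation, and then to $R_{x,y}=-\nabla^g_{[x,y]}+[\nabla^g_x,\nabla^g_y]$ (compare \eqref{eq:Rlam}).

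This yields $\nabla^1T=\nabla^1R=0$. Since a simply connected nilpotent Lie group with left-invariant metric is homogeneous and hence complete, the Tricerri--Vanhecke criterion \cite[Theorem 2.3]{TrVa84} applies with $\nabla^c=\nabla^1$ (a metric connection whose torsion $T$ and whose ambient Riemann tensor $R$ are both $\nabla^1$-parallel), showing that $(G,g)$ is naturally reductive with canonical connection $\nabla^1$. I expect the main obstacle to be the clean identification of the derivation property of $\Phi(z)$ with condition (3)—and in particular ensuring the factor $\tfrac23\mu$ matches—since this is what transfers the instanton information into the Leibniz rule; once that is in place, the remaining steps are standard consequences of $\Phi(z)$ being a skew-symmetric derivation preserving $\varphi$.
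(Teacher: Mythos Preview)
Your argument is correct, and for $\nabla^1 R=0$ it coincides with the paper's: both identify $\Phi(z)=\nabla^1_z$ as a skew-symmetric derivation of $(\mg,g)$ (via condition (3) of Proposition \ref{pro:caracg2} when $\dim\mg'=3$, and directly from $A=-j(e_7)$ when $\dim\mg'=1$), integrate it to a one-parameter group of isometries, and differentiate. The difference lies in $\nabla^1 T=0$. The paper does not use the derivation property there; instead it reads off the pair-symmetry $R^1(x,y,z,w)=R^1(z,w,x,y)$ from the explicit curvature in Examples \ref{ex:heis}--\ref{ex:n32}, invokes \cite[Corollary 3.4]{Iva02} to conclude that $\nabla^1 T$ is a $4$-form, and then observes that, since $\nabla^1_u=0$ on $\mr$ and $T\in\Lambda^2\mr^*\otimes(\mg')^*\ \oplus\ \Lambda^3(\mg')^*$, no nonzero $4$-form can arise. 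Your route is more self-contained: once $\Phi(z)$ is a skew-symmetric derivation annihilating $\varphi$, it commutes with $\d$ and $\star$, so it annihilates $T=\tfrac16\star(\d\varphi\wedge\varphi)\varphi-\star\d\varphi$ directly, bypassing the appeal to the curvature symmetry and to Ivanov's result. The paper's approach, in turn, yields the pair-symmetry of $R^1$ (and hence that $T$ is a Killing $3$-form) as additional information.
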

\begin{proof}By Theorem \ref{teo:classif}, if $\nabla^1$ is an instanton, then the Lie algebra $\mg$ of $G$ has a dual basis $\{e^1, \ldots, e^7\}$ such that the $\rG_2$-structure is given by \eqref{eq:gcvar} and the differentials verifiy the conditions of one of the Examples \ref{ex:heis}, \ref{ex:qheis} or \ref{ex:n32}, or respectively, (1), (2), or (3) in Theorem \ref{teo:classif}. In particular, the metric $g$ induced by $\varphi$ is the one making this basis orthonormal. In each of these examples, it was shown that the curvature $R^1$ has the symmetry $R^1(x,y,z,w)=R^1(z,w,x,y)$ for all $x,y,z,w\in\mg$. Then, by \cite[Corollary 3.4]{Iva02}, $\nabla^1 T$ is a 4-form. 

However, it was noted in the examples that $\nabla_u^1=0$ for all $u\in \mr$. In addition, if $\dim \mg'=1$ we have $T\in \Lambda^2\mr^*\otimes \Lambda^1(\mg')^*$, while $T\in( \Lambda^2\mr^*\otimes \Lambda^1(\mg')^*)\oplus\Lambda^3(\mg')^*$ when $\dim \mg'=3$. Hence the $4$-form $\nabla^1T$ vanishes. It remains to show that $\nabla^1 R=0$, which is equivalent to prove that $\nabla^1_z R=0$ for all $z\in\mg'$, since $\nabla^1_u=0$ for all $u\in(\mg')^\bot$. 

In case (1) of Theorem \ref{teo:classif}, it is easy to check that for any $z\in\mg'$,  $\nabla^1_z=-j(z)\in \so(\mr)$. Canonical computations, using the fact that $\dim(\mg')=1$, show that the extension by zero of $j(z)$ to the whole $\mg$ is a skew-symmetric derivation of $(\mg,g)$. Therefore, $e^{tj(z)}$ is an isometry of $(G,g)$  and thus $(e^{tj(z)})^*R=R$ for all $t\in\R$. Differentiating this equation, one gets that the action of $j(z)$ on $R$ vanishes, that is $j(z)R=-\nabla_z^1R=0$.

In cases (2) and (3) of Theorem \ref{teo:classif}, Lemma \ref{lm:npend} implies that for any $z\in\mg'$, $\nabla^1_z=-j(z)-2\nu\tau(z)$, where $\tau(z)\in\so(\mg')$ is the endomorphism corresponding to $z\lrcorner (\varphi|_{\mg'})$ (see also the computations in Examples \ref{ex:qheis} or \ref{ex:n32}).
In addition, by (3) in Proposition \ref{pro:caracg2}, 
\begin{equation}
[j(z),j(z')]=2\nu j(\tau(z)z'), \qquad \forall z,z'\in\mg'.
\end{equation}
Using this equality, one can show that for a fixed $z\in\mg'$, the skew-symmetric endomorphism  $D:=\nabla_z^1$ is a skew-symmetric derivation of $(\mg,g)$. This implies that $e^{tD}$ is an isometry of $(G,g)$ for all $t$, and thus $\nabla_z^1R=0$ as before.

Therefore $\nabla^1T=\nabla^1R=0$, so $(G,g)$ is naturally reductive and $\nabla^1=\nabla^c$ by \cite[Theorem 2.3]{TrVa84}.
\end{proof}

\begin{remarks}\begin{enumerate}
\item The converse of the above theorem does not hold. In fact, $\mh_3\oplus\R^4$ admits naturally reductive metrics \cite{TrVa83}. However, it does not admit purely-coclosed $\rG_2$-structures \cite{dBMR} and thus $\nabla^1$ does not define a $\rG_2$-instanton for any coclosed $\rG_2$-structure, due to Theorem \ref{teo:gprim1}.

\item For a general Riemannian manifold, the condition for it to be naturally reductive requires the specification of a transitive subgroup of isometries $H$. In the case of nilpotent Lie groups endowed with left-invariant metrics, there is no such a need because $(G,g)$ is naturally with respect to a subgroup $H$ if and only if it is naturally reductive with respect to its full isometry group $\Iso(G,g)$ \cite[Theorem 3.5]{LA3}.

\item  The fact that $(G,g)$ is naturally reductive when $\nabla^1$ is an instanton can also be shown by a result of Gordon \cite{GO2}. In fact, she proves that $(G,g)$ is naturally reductive if and only if the the image of the map $j: \mg'\to\so(\mr)$ is a subalgebra of $\so(\mr)$ and also,  for each $z\in \mg'$  the map $z'\mapsto j^{-1}[j(z),j(z')]$ is in $\so(\mg')$. One can easily check that the latter conditions hold when $\nabla^1$ is a $\rG_2$-instanton. Indeed, for $\dim\mg'=1$ they are trivially satisfied, and for $\dim\mg'\geq 2$ they follow from \eqref{eq:jcondteo}. 

\item The additional symmetry of the curvature tensor $R^1$ when $\nabla^1$ is an instanton, namely \linebreak $R^1(x,y,z,w)=R^1(z,w,x,y)$ for all $x,y,z,w\in\mg$, implies that $T$ is a Killing 3-form on $\mg$ \cite[Corollary 3.4]{Iva02}, that is, $\nabla^gT=\frac14 \d T$. It was shown in \cite{dBM2} that every left-invariant Killing 3-form on a 2-step nilpotent Lie group endowed with a left-invariant metric $(G,g)$ is a multiple of the torsion form of the naturally reductive connection $\nabla^c$.

\item In \cite{AFS15}, the authors study naturally reductive structures on $\mh_\H$. They show that the naturally reductive connection coincides with $\nabla^1$ for some coclosed $\rG_2$-structure, and therefore one obtains $\nabla^1 T=\nabla^1 R=0$.

\item A $\rG_2$-instanton is constructed on $\R\oplus\mn_{3,2}$, with the metric corresponding to $\nu=1$ in \eqref{eq:n32}, by lifting an $\SU(3)$-instanton in \cite[\S 6.2]{IvIv05}. The resulting $\rG_2$-structure is not coclosed since its torsion form $\tau_1$ is non-zero.
    \end{enumerate}
\end{remarks}

\section{\texorpdfstring{$\rG_2$}{G2}-structures Calibrating a Central Subspace Containing the Center}\label{sec:appadapt}

Along this section, $\mg$ is a 2-step nilpotent Lie algebra of dimension 7 with $\dim\mg'\geq 2$, and $\varphi$ is a coclosed $\rG_2$-structure satisfying the following:

\begin{assumption} \label{as0} There exist a  subspace $\mc$ of $\mg$, verifying $\mg'\subset\mc\subset\mz$ and calibrated by $\varphi$.
\end{assumption}

The aim of this section is to make explicit computations for the connection $\nabla^\lambda$ in  \eqref{eq:np} defined by the coclosed $\rG_2$-structures under this assumption. In particular, we will compute its curvature components and study the instanton condition on it.

As pointed out at the beggining of Subsection \ref{ss:nsc}, under this assumption, there exists an orthonormal basis  $\{e_1, \ldots, e_7\}$ of $\mg$ such that $\mc=\langle\{e_5,e_6,e_7\}\rangle$ and \eqref{eq:gcvar} holds. Moreover, if $\mq$ denotes the orthogonal of $\mc$, then $\alpha_i:=\d e^i\in \Lambda^2\mq^*$ for $i=5,6,7$, whilst $\d e^i=0$ for $i=1, \ldots, 4$ (see \eqref{eq:deial}).

Consider the orientation $e^{1234}$ on $\mc^\bot$ and the splitting of $\Lambda^2\mq^*$ into self-dual and anti-self dual forms, so that $\alpha_i=\alpha_i^++\alpha_i^-$ as in \eqref{eq:strbody}. In addition, let $\{\sigma_1^+,\sigma_2^+,\sigma_3^+\}$ be the basis in \eqref{eq:sigmabasis} so that $\varphi$ takes the form \eqref{eq:phibody}.

In order to achieve manageable expressions for the connection under study and its curvature, we start by introducing the following.
 
\subsection{Notation.}\label{ss:not}
Let $[\,,\,]$ denote the Lie bracket of $\mg$. We set other three Lie algebra structures on $\mg$, whose Lie brackets will be denoted by $[\,,\,]_+$, $[\,,\,]_-$ and $[\,,\,]_0$ as follows
\begin{eqnarray}
\label{eq:brbeta}
 &\,[u,v]_\pm=-\sum_{i=5}^7 \alpha_i^\pm(u,v)e_i,\quad \,[u,v]_0=\sum_{i=5}^7 \sigma_i^+(u,v)e_i,\quad \forall\, u,v\in\mq, \\
 &\mbox{ and }\quad [x,z]_\pm=[x,z]_0=0, \quad \forall x\in\mg, \,z\in\mc.
\end{eqnarray}
It is easy to check that each $(\mg,[\cdot,\cdot]_{\pm,0})$ is either an abelian or a $2$-step nilpotent Lie algebra, its commutator is contained in $\mc$, and $\mc$ is contained in the center.

For each $z\in \mc$,  let $j(z)^{\pm,0}$ denote the skew-symmetric map defined on $\mq$ by the equations
\begin{equation}
\label{eq:j0}
\begin{array}{rcl}
\lela j(z)^{\pm} u,v\rira&=&\lela[u,v]_{\pm},z\rira=-\sum_{i=5}^7 \alpha_i^\pm(u,v)g(z,e_i),\qquad \forall u,v\in\mq,\\
\\
\lela j(z)^{0} u,v\rira&=&\lela[u,v]_{0},z\rira=\sum_{i=5}^7 \sigma_i^+(u,v)g(z,e_i),\qquad \forall u,v\in\mq.
\end{array}
\end{equation}
In particular, for every  $i=5,6,7$,  $\lela j(e_i)^\pm u,v\rira=-\alpha_i^\pm(u,v)$  and $g(j(e_i)^0,u,v)=\sigma_{i-4}^+(u,v)$, for all $u,v\in\mq$.

It is straightforward that if $j(z)$ is the map corresponding to $[\,,\,]$ via \eqref{eq:jc}, then 
\[
j(z)^++j(z)^-=j(z), \qquad \forall z\in\mc.
\]

Under the identification of $\Lambda^2\mq^*$ with $\so(\mq)$ via the metric, it is easy to check that the space of self-dual and anti-self dual forms on $\Lambda^2\mq^*$ induces a splitting as a direct sum of (commuting) ideals $\so(\mq)=\so(\mq)^+\oplus\so(\mq)^-$, where each ideal is isomorphic to $\so(3)$. Through this identification, for any $z\in\mc$, $\d z^\flat$ corresponds to $-j(z)$, and its components $(\d z^\flat)^\pm$ correspond to $-j(z)^\pm$; that is, $j(z)^\pm\in\so(\mq)^\pm$. Moreover,  $j(z)^0\in\so(\mq)^+$ as well and one has
\begin{equation}\label{eq:jzconmut}
[j(z)^0,j(z')^-]=[j(z)^+, j(z')^-]=0,\qquad \mbox{  for every }z,z'\in\mc.
\end{equation}

\subsection{The Linear Connection \texorpdfstring{$\nabla^\lambda$}{D} and its Curvature \texorpdfstring{$R^\lambda$}{R}.}\label{ss:con} 
Recall that under Assumption \ref{as0},  $\varphi$ has the form \eqref{eq:phibody} and its Hodge dual is given in \eqref{eq:stphi}. Let us denote $\mu:=\tr S$ where $S=(s_{ij})_{i,j=1}^3$ is defined in \eqref{eq:cc}.  By \eqref{eq:strbody} and \eqref{eq:phibody}  we get
\[
\d\varphi=2\mu e^{1234}+(\alpha_5^++\alpha_5^-)\wedge e^{67}-(\alpha_6^++\alpha_6^-)\wedge e^{57}+(\alpha_7^++\alpha_7^-)\wedge e^{56},
\]
which implies
\begin{equation}\label{eq:stdp}
-\star \d\varphi=-2\mu e^{567}-(\alpha_5^+-\alpha_5^-)\wedge e^{5}-(\alpha_6^+-\alpha_6^-)\wedge e^{6}-(\alpha_7^+-\alpha_7^-)\wedge e^{7}.
\end{equation}
In addition, 
\eqref{eq:pwdp} yields
\begin{equation}
\label{eq:t0}
\star(\varphi\wedge \d\varphi)=4\mu.
\end{equation}
Therefore, the $\rG_2$-structure $\varphi$ is purely-coclosed if and only if  $\mu=\tr S=0$.

Since $\varphi$ calibrates $\mc$,  for any $z\in\mc$, there is a skew-symmetric map  $\tau(z)\in\so(\mc)$ corresponding to $z\lrcorner (\varphi|_{\mg'})$, namely, 
\begin{equation}
\label{eq:taudef}
g(\tau(z)z',z'')=g(z\lrcorner(\varphi|_{\mc})z',z'')=\varphi(z,z',z''), \qquad \forall z',z''\in\mc.
\end{equation}
It is straightforward from  \eqref{eq:stdp}  and \eqref{eq:t0} above, that the torsion \eqref{eq:tor} is 
\begin{eqnarray}
    \label{eq:tor2}
T&=&\frac23\mu\varphi -2\mu e^{567}-(\alpha_5^+-\alpha_5^-)\wedge e^{5}-(\alpha_6^+-\alpha_6^-)\wedge e^{6}-(\alpha_7^+-\alpha_7^-)\wedge e^{7}\\ \nonumber
&=&-\frac43\mu e^{567}
+\sum_{i=5}^7(-\alpha_i^++\alpha_i^-+\frac23\mu\sigma_{i-4}^+)\wedge e^{i}.
\end{eqnarray}
Note that $T\in( \Lambda^2\mq^*\otimes \mc^*)\oplus \Lambda^3\mc^*$. For each $\lambda\in\R$, let  $\nabla^\lambda$ be the metric connection defined in \eqref{eq:np}, namely, 
\begin{equation}
\label{eq:npapp}
\lela \nabla_x^\lambda y,z\rira =\lela \nabla_x^gy,z\rira +\frac{\lambda}2 T(x,y,z), \qquad x,y,z\in\mg,
\end{equation}where $T$ is given in \eqref{eq:tor2}.

We use the notation in the previous subsection to give explicit expressions for the connection $\nabla^\lambda$.
\begin{lm}\label{lm:npend}
For any vector $x\in \mg$, the endomorphism $\nabla^\lambda_x\in \so(\mg)$ given in \eqref{eq:npapp} is:
\begin{eqnarray}
\label{eq:npend1}
\nabla_u^\lambda&=&\frac{(\lambda+1)}2(\ad_u^+-(\ad_u^+)^*)-\frac{(\lambda-1)}2(\ad_u^--(\ad_u^-)^*)+\frac13\lambda\mu (\ad_u^0-(\ad_u^0)^*),\quad\forall u\in\mq, \\
\nabla_z^\lambda&=&\frac{(\lambda-1)}2j(z)^+-\frac{(\lambda+1)}2j(z)^-+\frac13\lambda\mu \,j(z)^0-\frac23 \mu\lambda \;\tau(z), \quad \forall z\in\mc.\label{eq:npend2}
\end{eqnarray}
Equivalently, we have
\begin{equation}\label{eq:nablap}
\left\{
\begin{array}{ll}
\nabla^\lambda_u v=\frac{(\lambda+1)}2[u,v]^+-\frac{(\lambda-1)}2[u,v]^-+\frac13\lambda \mu [u,v]^0&\mbox{ if } u,v\in\mq,\\
\nabla^\lambda_u z= -\frac{(\lambda+1)}2(\ad_u^+)^*z+\frac{(\lambda-1)}2(\ad_u^-)^*z-\frac13\mu\lambda(\ad_u^0)^*z&\mbox{ if } u\in\mq,\,z\in\mc,\\
\nabla^\lambda_z u= \frac{(\lambda-1)}2(\ad_u^+)^*z-\frac{(\lambda+1)}2(\ad_u^-)^*z+\frac13\mu\lambda(\ad_u^0)^*z
&\mbox{ if } u\in\mq,\,z\in\mc,\\
\nabla^\lambda_z z'=-\frac23\mu\lambda \; \tau(z)z'& \mbox{ if } z, z'\in\mc.
\end{array}\right.
\end{equation}
\end{lm}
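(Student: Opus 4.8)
The plan is to compute the skew-symmetric endomorphism $\nabla^\lambda_x=\nabla^g_x+\tfrac{\lambda}{2}T(x)$ directly from its two ingredients, handling the Levi-Civita part and the torsion part separately and then reading off the block structure determined by the splitting $\mg=\mq\oplus\mc$.

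First I would record the Levi-Civita connection in this splitting. Since $\mc\subset\mz$, for $u,v\in\mq$ the bracket $[u,v]$ lies in $\mc$, while $[x,z]=0$ for every $z\in\mc$; feeding this into Koszul's formula \eqref{eq:Koszul} (equivalently, specialising \eqref{eq:nabla} to the central ideal $\mc$) gives $\nabla^g_u v=\tfrac12[u,v]$ for $u,v\in\mq$, then $\nabla^g_u z=\nabla^g_z u=-\tfrac12 j(z)u$ for $u\in\mq,\,z\in\mc$, and $\nabla^g_z z'=0$ for $z,z'\in\mc$. Using the decompositions $\ad_u=\ad_u^++\ad_u^-$ and $j(z)=j(z)^++j(z)^-$ this reads $\nabla^g_u=\tfrac12(\ad_u^+-(\ad_u^+)^*)+\tfrac12(\ad_u^--(\ad_u^-)^*)$ and $\nabla^g_z=-\tfrac12\bigl(j(z)^++j(z)^-\bigr)$. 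The identity underlying this rewriting, which I would establish once and reuse throughout, is $(\ad_u^\bullet)^*z=j(z)^\bullet u$ for $\bullet\in\{+,-,0\}$ and $u\in\mq,\ z\in\mc$; it is immediate from the definitions \eqref{eq:brbeta} and \eqref{eq:j0}, since $g\bigl((\ad_u^\bullet)^*z,v\bigr)=g\bigl(z,[u,v]_\bullet\bigr)=g\bigl(j(z)^\bullet u,v\bigr)$.

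Next I would contract the torsion $3$-form \eqref{eq:tor2}, namely $T=-\tfrac43\mu\, e^{567}+\sum_{i=5}^7\bigl(-\alpha_i^++\alpha_i^-+\tfrac23\mu\,\sigma_{i-4}^+\bigr)\wedge e^{i}$, in its first argument. For $u\in\mq$ the term $e^{567}$ drops out and $u\lrcorner(\beta_i\wedge e^i)=(u\lrcorner\beta_i)\wedge e^i$ has one leg in $\mq$ and one in $\mc$; evaluating on $v\in\mq,\ z\in\mc$ and invoking \eqref{eq:brbeta}--\eqref{eq:j0} converts the coefficient forms into $j(z)^+-j(z)^-+\tfrac23\mu\, j(z)^0$, giving $T(u)=(\ad_u^+-(\ad_u^+)^*)-(\ad_u^--(\ad_u^-)^*)+\tfrac23\mu(\ad_u^0-(\ad_u^0)^*)$. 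For $z\in\mc$ one has $z\lrcorner(\beta_i\wedge e^i)=g(z,e_i)\beta_i\in\Lambda^2\mq^*$, so the $\beta_i$ contribute $j(z)^+-j(z)^-+\tfrac23\mu\, j(z)^0$ on $\mq$, while $z\lrcorner(-\tfrac43\mu\,e^{567})$ contributes $-\tfrac43\mu\,\tau(z)$ on $\mc$, once one recognises from \eqref{eq:gcvar}--\eqref{eq:phibody} that $\varphi|_{\mc}=e^{567}$, so that $z\lrcorner e^{567}$ is exactly the endomorphism $\tau(z)$ of \eqref{eq:taudef}.

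Finally I would add $\nabla^g_x$ and $\tfrac{\lambda}{2}T(x)$; collecting the $\ad_u^{\pm},\ad_u^0$ terms yields \eqref{eq:npend1} (with $\tfrac{1-\lambda}{2}=-\tfrac{\lambda-1}{2}$) and collecting the $j(z)^{\pm},j(z)^0,\tau(z)$ terms yields \eqref{eq:npend2}. Evaluating these endomorphisms separately on $\mq$ and on $\mc$, again through $(\ad_u^\bullet)^*z=j(z)^\bullet u$ and $\ad_u^\bullet v=[u,v]_\bullet$, then produces the four block formulas \eqref{eq:nablap}. The only genuinely delicate points are the sign-and-duality bookkeeping in the contraction of $T$ — keeping the self-dual, anti-self-dual and $\sigma^+$ pieces distinct while tracking the $\pm$ signs — and the identification $\varphi|_{\mc}=e^{567}$ that produces the $\tau(z)$ term; everything else is routine linear algebra in the adapted orthonormal basis.
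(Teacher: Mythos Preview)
Your proposal is correct and follows essentially the same route as the paper: both arguments amount to evaluating $\nabla^g_x+\tfrac{\lambda}{2}T(x,\cdot,\cdot)$ on the blocks of $\mg=\mq\oplus\mc$ using the Koszul formula \eqref{eq:nabla} for the Levi-Civita part and the explicit torsion \eqref{eq:tor2}, together with the identity $(\ad_u^\bullet)^*z=j(z)^\bullet u$. The only organisational difference is that you first compute $\nabla^g_x$ and $T(x)$ as full endomorphisms and then add, whereas the paper works directly with the inner products $g(\nabla^\lambda_x y,w)$ case by case; the computations and the ingredients are identical.
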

\begin{proof}
Consider the orthogonal decomposition $\mg=\mq\oplus\mc$. For any $u,v,w\in \mq$, the right hand side of \eqref{eq:npapp} is zero because of \eqref{eq:nabla} and \eqref{eq:tor2}, implying that $\nabla_u^\lambda v\in\mc$. In addition, for any $u,v\in \mq$, $z\in\mc$, by \eqref{eq:tor2}, we have
\begin{align*}
2\lela \nabla_u^\lambda v,z\rira
=&\left(\sum_{i=5}^7\left(-(\lambda+1)\alpha_i^++(\lambda-1)\alpha_i^-+\frac23\lambda\mu\sigma_{i-4}^+\right)\wedge e^i\right)(u,v,z).
\end{align*}
In addition, for any $z,z'\in\mc$, $u\in\mq$, 
\[
\lela \nabla_u^\lambda z,z'\rira =\lela \nabla_u^gz,z'\rira +\frac{\lambda}2 T(u,z,z')=0=\lela \nabla_z^\lambda u,z'\rira , 
\]and thus $\nabla_u^\lambda z, \nabla_z^\lambda u\in \mq$. Similar computations give
\begin{eqnarray*}
\lela \nabla_u^\lambda z,v\rira &=&-\lela z,\nabla_u^gv\rira-\frac{\lambda} 2 T(u,v,z)=- \lela z,\nabla_u^\lambda v\rira 
\end{eqnarray*}
and furthermore, since $\nabla^g_uz=\nabla^g_zu$, we have
\begin{align*}
2\lela \nabla_z^\lambda u,v\rira =&2\lela \nabla_u^gz,v\rira+\lambda T(z,u,v)=-2\lela \nabla_u^gv,z\rira+\lambda T(u,v,z)\\
=&\left(\sum_{i=5}^7\left(-(\lambda-1)\alpha_i^++(\lambda+1)\alpha_i^-+\frac23\lambda\mu\sigma_{i-4}^+\right)\wedge e^i\right)(u,v,z).
\end{align*}
Finally, it is easy to see from \eqref{eq:nabla},  \eqref{eq:tor2} and \eqref{eq:npapp} that $\lela \nabla^\lambda_zz',u\rira=0$ for all $z,z'\in\mc$ and $u\in\mq$, and also,
\begin{equation}\label{eq:nlzzz}
\lela \nabla^\lambda_zz',z''\rira=-\frac23\lambda\mu \varphi(z,z',z'')=g(\tau(z)z',z''),\qquad  \forall \,z,z',z''\in\mc,
\end{equation} where $\tau$ is as in \eqref{eq:taudef}.
\end{proof}

Recall from Proposition \ref{pro:Sdiag}, that if $\varphi$ satisfies Assumption \ref{as0}, then there exists a basis $\{e_1, \ldots, e_7\}$ such that $\mc=\langle\{e_5,e_6,e_7\}\rangle$, $\varphi$ is as in \eqref{eq:phibody} and \[
(\d e^{i+4} )^+=d_{i+4} \sigma_i^+, \qquad i=1,2,3,
\] for some $d_i\in\R$.
This implies that, in this basis,  the symmetric matrix $S$ is diagonal of the form $S={\rm diag}(d_5,d_6,d_7)$ and $\mu=d_5+d_6+d_7$. In the notation of Subsection \ref{ss:not}, we get
\begin{equation}\label{eq:Sdiag2}
j(e_\alpha)^+=-d_\alpha j(e_\alpha)^0  \qforq \alpha=5,6,7.
\end{equation}Hence, it is straightforward that if $(\alpha,\beta,\gamma)$ is an even permutation of $(5,6,7)$ then 
\begin{equation}\label{eq:br}
[j(e_{\alpha})^+,j(e_{\beta
})^+]=-2 d_\alpha d_\beta j(e_\gamma)^0 \qandq [\tau(e_\alpha),\tau(e_\beta)]=\tau(e_\gamma).
\end{equation}

We  use the expressions \eqref{eq:npend1}-\eqref{eq:npend2} to compute the components $(R^\lambda)^\alpha_\beta$  of the curvature in this basis (see \eqref{eq:Rlamab}). 

\begin{lm} \label{lm:Rab} The curvature $R^\lambda$ of the connection $\nabla^\lambda$ in \eqref{eq:nablap} satisfies:
\begin{enumerate}
\item\label{it:R_567} For $\alpha,\beta,\gamma\in\{5,6,7\}$, such that $(\alpha,\beta,\gamma)$ is an even permutation of $(5,6,7)$, we have:
\begin{multline}
\label{eq: R_567}
    (R^\lambda)^\alpha_\beta = \left(\frac{(\lambda+1)^2}{2}d_\alpha d_\beta-\frac{\lambda+1}{3}\mu\lambda(d_\alpha+d_\beta)+\frac{2}{9}(\lambda\mu)^2 \right)j(e_\gamma)^0-\frac{2}{3}\mu\lambda j(e_\gamma)\\
    -\frac{(\lambda-1)^2}{4}[j(e_\alpha)^-,j(e_\beta)^-]-\left(\frac{2}{3}\mu\lambda\right)^2\tau(e_\gamma).
\end{multline}

\item \label{it:R_14} For $\alpha,\beta\in \{1,2,3,4\}$ we have $(R^\lambda)^\alpha_\beta=\zeta^\alpha_\beta+\eta^\alpha_\beta$, where $\zeta^\alpha_\beta\in \Lambda^2\mc^*$ and $\eta^\alpha_\beta\in\Lambda^2\mq^*$ satisfy
\begin{multline}\label{eq:2-form_sigma}
\zeta^\alpha_\beta(e_j,e_k)
=
\left(
\frac{(\lambda-1)^2}4 d_jd_k-\frac{\lambda-1}6\lambda \mu( d_j+d_k)+(\frac13\mu\lambda)^2
\right)
[j(e_j)^0,j(e_k)^0]^\alpha_\beta+\frac{(\lambda+1)^2}4[j(e_j)^-,j(e_k)^-]^\alpha_\beta
\end{multline} for all $j,k\in\{5,6,7\}$,
and 
\begin{align}\label{eq:2-form_eta}
\eta^\alpha_\beta=&\sum_{l=5}^7 a_{l}
j(e_l)^{0}e_\alpha\wedge j(e_l)^0 e_\beta+ 
b_l
(j(e_l)^0e_\alpha\wedge j(e_l)^-e_\beta+j(e_l)^-e_\alpha\wedge j(e_l)^0e_\beta)\\ \nonumber
&-\left(c_l(j(e_l)^0)^\beta_\alpha+\frac{\lambda+1}2 (j(e_l)^-)^\beta_\alpha\right)j(e_l)
-\frac{(\lambda-1)^2}4 j(e_l)^{-}e_\alpha\wedge j(e_l)^- e_\beta
\end{align}
where 
\begin{equation}\label{eq:abc}
a_{l}:=-\left(\frac{\mu\lambda}{3}-\frac{\lambda+1}2d_l\right)^2 \quad
b_{l}:= \frac{\lambda-1}{2}\left(\frac{\mu\lambda}{3}-\frac{\lambda+1}2d_l\right) \quad
c_{l}:= \frac{\lambda-1}2 d_l  -\frac{\mu\lambda}3.
\end{equation}
\end{enumerate}
\end{lm}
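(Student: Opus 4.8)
The plan is to compute $R^\lambda$ directly from \eqref{eq:Rlam}, namely $R^\lambda_{x,y}=-\nabla^\lambda_{[x,y]}+\nabla^\lambda_x\nabla^\lambda_y-\nabla^\lambda_y\nabla^\lambda_x$, substituting the explicit endomorphisms $\nabla^\lambda_u$ (for $u\in\mq$) and $\nabla^\lambda_z$ (for $z\in\mc$) supplied by Lemma \ref{lm:npend}. The structural fact that organises everything is the following parity, read off from \eqref{eq:nablap}: for $u\in\mq$ the endomorphism $\nabla^\lambda_u$ \emph{interchanges} the summands of $\mg=\mq\oplus\mc$, whereas for $z\in\mc$ the endomorphism $\nabla^\lambda_z$ \emph{preserves} each summand. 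Combining this with two-step nilpotency --- which gives $[e_j,e_k]\in\mg'\subseteq\mc$, and $[e_j,e_k]=0$ unless both indices lie in $\{1,2,3,4\}$ --- I can discard most terms of \eqref{eq:Rlamab} before any scalar computation. Indeed, a pair $e^j\wedge e^k$ with one index in $\{1,2,3,4\}$ and the other in $\{5,6,7\}$ routes $e_\beta$ through exactly one type-preserving and one type-swapping factor, so $R^\lambda_{e_j,e_k}e_\beta$ lands in the summand orthogonal to $e_\alpha$ in both cases under consideration; the mixed pairs therefore contribute nothing. Hence for $\alpha,\beta\in\{5,6,7\}$ (part (1)) and for $\alpha,\beta\in\{1,2,3,4\}$ (part (2)) only the purely-$\mq$ pairs $e^j\wedge e^k$ ($j,k\le4$) and the purely-$\mc$ pairs ($j,k\ge5$) survive, which is precisely the decomposition of $(R^\lambda)^\alpha_\beta$ into its $\Lambda^2\mq^*$ and $\Lambda^2\mc^*$ summands recorded in the statement.

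For part (1), fix an even permutation $(\alpha,\beta,\gamma)$ of $(5,6,7)$. The $\Lambda^2\mc^*$ summand comes from $j,k\in\{5,6,7\}$: here $[e_j,e_k]=0$ and $\nabla^\lambda_{e_j}$ restricts to $-\tfrac23\mu\lambda\,\tau(e_j)$ on $\mc$, whence $R^\lambda_{e_j,e_k}|_{\mc}=(\tfrac23\mu\lambda)^2[\tau(e_j),\tau(e_k)]$; assembling the $2$-form and invoking the bracket relation $[\tau(e_\alpha),\tau(e_\beta)]=\tau(e_\gamma)$ of \eqref{eq:br} reproduces the term $-(\tfrac23\mu\lambda)^2\tau(e_\gamma)$. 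The $\Lambda^2\mq^*$ summand comes from $j,k\in\{1,2,3,4\}$ and splits in two: the commutator term $-\nabla^\lambda_{[e_j,e_k]}e_\beta=\tfrac23\mu\lambda\,\tau([e_j,e_k])e_\beta$ reassembles, through the identification of $\dd e^\gamma$ with $-j(e_\gamma)$, into the full-torsion contribution $-\tfrac23\mu\lambda\,j(e_\gamma)$; while the product term $\nabla^\lambda_{e_j}\nabla^\lambda_{e_k}e_\beta-(j\leftrightarrow k)$, expanded with \eqref{eq:npend1}--\eqref{eq:npend2} acting on $e_\beta\in\mc$, yields the self-dual piece proportional to $j(e_\gamma)^0$ together with the anti-self-dual commutator $-\tfrac{(\lambda-1)^2}4[j(e_\alpha)^-,j(e_\beta)^-]$. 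Part (2) is treated identically with $\alpha,\beta\in\{1,2,3,4\}$: the pairs $j,k\in\{5,6,7\}$ (both factors type-preserving) give $\zeta^\alpha_\beta\in\Lambda^2\mc^*$, a commutator of the operators $j(\cdot)^0,j(\cdot)^-$ evaluated at $e_\alpha,e_\beta$, while the pairs $j,k\in\{1,2,3,4\}$ give $\eta^\alpha_\beta\in\Lambda^2\mq^*$, now with the commutator term $-\nabla^\lambda_{[e_j,e_k]}$ feeding in through $[e_j,e_k]\in\mc$ and producing the full $j(e_l)$ factors.

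The substance of the proof --- and the step I expect to be the genuine obstacle --- is the scalar bookkeeping that turns these operator compositions into the stated polynomials in $\lambda,\mu,d_5,d_6,d_7$, i.e. the prefactors in \eqref{eq: R_567} and the coefficients $a_l,b_l,c_l$ of \eqref{eq:abc}. Two facts keep this under control and are used repeatedly. First, in the diagonalising basis of Proposition \ref{pro:Sdiag} one has $j(e_\alpha)^+=-d_\alpha\,j(e_\alpha)^0$ by \eqref{eq:Sdiag2}, so every factor $j(\cdot)^+$ can be traded for $j(\cdot)^0$ at the cost of a scalar $-d_\alpha$; this is what merges the various $(\lambda\pm1)$-weighted contributions into the single quadratic prefactor of $j(e_\gamma)^0$. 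Second, the splitting $\so(\mq)=\so(\mq)^+\oplus\so(\mq)^-$ into commuting ideals, with $j(\cdot)^0,j(\cdot)^+\in\so(\mq)^+$, $j(\cdot)^-\in\so(\mq)^-$ and the vanishing brackets of \eqref{eq:jzconmut}, annihilates all mixed self-dual/anti-self-dual cross terms in the products; only the pure self-dual products, which close onto $j(e_\gamma)^0$ via \eqref{eq:br}, and the pure anti-self-dual commutator $[j(e_\alpha)^-,j(e_\beta)^-]$ remain. With these two reductions in hand the expansion is a finite, if lengthy, matching of coefficients against \eqref{eq: R_567}, \eqref{eq:2-form_sigma} and \eqref{eq:2-form_eta}, and carries no conceptual difficulty beyond careful organisation.
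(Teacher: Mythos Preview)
Your proposal is correct and follows essentially the same approach as the paper: the parity observation that $\nabla^\lambda_u$ swaps $\mq\leftrightarrow\mc$ while $\nabla^\lambda_z$ preserves each summand is exactly what the paper establishes (as $R^\lambda_{u,z}\in\mq\otimes\mc$) to discard the mixed $e^j\wedge e^k$ contributions, and the subsequent scalar bookkeeping using \eqref{eq:Sdiag2} and \eqref{eq:jzconmut} is identical in spirit. One small caveat: your claim that the commuting-ideals fact ``annihilates all mixed self-dual/anti-self-dual cross terms'' is accurate for the bracket terms appearing in part (1) and in $\zeta^\alpha_\beta$, but for $\eta^\alpha_\beta$ the relevant expressions are wedge products $j(e_l)^{\epsilon_1}e_\alpha\wedge j(e_l)^{\epsilon_2}e_\beta$ rather than Lie brackets, and the mixed $(0,-)$ terms survive as the $b_l$-coefficients in \eqref{eq:2-form_eta}---this does not affect your overall argument, since you correctly defer $\eta^\alpha_\beta$ to the ``finite, if lengthy, matching of coefficients'', but be careful not to overstate the simplification when carrying it out.
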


Notice that in some of the above expressions, we mix 2-forms with skew-symmetric maps on behalf of the identifications $\Lambda^2\mq\simeq \so(\mq)$. Also, if $A\in\so(\mq)$ and $\alpha,\beta\in\{1, \ldots,4\}$ we denote $A^\alpha_\beta:=\lela Ae_\beta,e_\alpha\rira$.

\begin{proof}
Firstly, for any $x,y\in \mg$ we compute the endomorphism $R^\lambda_{x,y}\in \so(\mg)$. Thus, 
for $u,v\in\mq$  set $\epsilon_u, \epsilon_v\in\{\pm,0\}$, we know that ${\rm Im}\ad_u^{\epsilon_u}\subset \mz\subset \ker(\ad_v^{\epsilon_v})$, hence $[\ad_u^{\epsilon_u},\ad_v^{\epsilon_v}]=0=[(\ad_u^{\epsilon_u})^*,(\ad_v^{\epsilon_v})^*]$. These facts together with \eqref{eq:npend1} give
\begin{eqnarray}
R_{u,v}^\lambda&=&-\frac{(\lambda+1)^2}4\left([\ad_u^+,(\ad_v^+)^*]+[(\ad_u^+)^*,\ad_v^+]\right)
-\frac{(\lambda-1)^2}4\left([\ad_u^-,(\ad_v^-)^*]+[(\ad_u^-)^*,\ad_v^-]\right)\nonumber\\
&&+\frac{\lambda^2-1}{4}\left([\ad_u^-,(\ad_v^+)^*]+[(\ad_u^-)^*,\ad_v^+]\right)
+\frac{\lambda^2-1}{4}\left([\ad_u^+,(\ad_v^-)^*]+[(\ad_u^+)^*,\ad_v^-]\right)\nonumber\\
&&-\frac{(\lambda+1)}6\mu\lambda\left([\ad_u^+,(\ad_v^0)^*]+[(\ad_u^+)^*,\ad_v^0]+[\ad_u^0,(\ad_v^+)^*]+[(\ad_u^0)^*,\ad_v^+]\right)\label{eq:Ruv}\\
&&+\frac{(\lambda-1)}6\mu\lambda\left([\ad_u^-,(\ad_v^0)^*]+[(\ad_u^-)^*,\ad_v^0]+[\ad_u^0,(\ad_v^-)^*]+[(\ad_u^0)^*,\ad_v^-]\right)\nonumber\\
&&-\frac19(\lambda\mu)^2\left([\ad_u^0,(\ad_v^0)^*]+[(\ad_u^0)^*,\ad_v^0]\right)\nonumber\\
&&+\frac{(\lambda-1)}2j([u,v])^++\frac{(\lambda+1)}2j([u,v])^--\frac{1}3\mu\lambda \, j([u,v])^0+\frac23\mu\lambda \tau([u,v]),\nonumber
\end{eqnarray}
where $\tau$ is as in \eqref{eq:taudef}.
In addition, for any  $z,z'\in \mc$, and $u,v,w\in\mq$ we have
\begin{eqnarray}
\label{eq:epuepv}
[\ad_u^{\epsilon_u},(\ad_v^{\epsilon_v})^*]w&=&-(\ad_v^{\epsilon_v})^* \ad_u^{\epsilon_u}w=-j([u,w]^{\epsilon_u})^{\epsilon_v}v,\\
\lela [\ad_u^{\epsilon_u},(\ad_v^{\epsilon_v})^*]z,z'\rira&=&\lela \ad_u^{\epsilon_u}(\ad_v^{\epsilon_v})^*z,z'\rira=-\lela j(z)^{\epsilon_v} j(z')^{\epsilon_u} u,v\rira,
\nonumber\\
\lela [\ad_u^{\epsilon_u},(\ad_v^{\epsilon_v})^*]z,w\rira&=&0.\nonumber
\end{eqnarray}
Hence, the endomorphism $R_{u,v}^\lambda$ preserves $\mq$ and $\mc$.

 Now, let $z,z'\in\mc$, by \eqref{eq:jzconmut} and \eqref{eq:npend2} we get
\begin{eqnarray}
 R^\lambda_{z,z'}
  &=&\frac{(\lambda-1)^2}4[j(z)^+,j(z')^+]+\frac{(\lambda-1)}6\lambda\mu ([j(z)^+,j(z')^0]+[j(z)^0,j(z')^+])\label{eq:Rzz}\\
&& 
 +(\frac13\mu\lambda)^2[j(z)^0,j(z')^0]+(\frac23\lambda\mu)^2[\tau(z),\tau(z')]+\frac{(\lambda+1)^2}4[j(z)^-,j(z')^-].\nonumber
\end{eqnarray}

Finally, for $u\in\mq$,  and $z\in\mc$ set  $\epsilon_u,\epsilon_z\in\{\pm,0\}$. Thus,  for any $(w,z')\in \mq\oplus\mc$ one has the following:
\begin{eqnarray*}
\,[\ad_u^{\epsilon_u}-(\ad_u^{\epsilon_u})^*,j(z)^{\epsilon_z}](w,z') &=& ( j(z)^{\epsilon_z}(\ad_u^{\epsilon_u})^*z',
\ad_u^{\epsilon_u}j(z)^{\epsilon_z} w)\in \mq\oplus\mc\\
\,[\ad_u^{\epsilon_u}-(\ad_u^{\epsilon_u})^*,\tau(z)] (w,z') &=& -((\ad_u^{\epsilon_u})^*\tau(z)z' ,
\tau(z) \ad_u^{\epsilon_u}w)\in \mq\oplus\mc.
\end{eqnarray*}
Hence, from \eqref{eq:npend1} and \eqref{eq:npend2}, we get that $R_{u,z}\in \mq\otimes\mc$, which means 
\begin{equation}\label{eq:Ruzzz}
\lela R_{u,z}^\lambda z',z''\rira =\lela R_{u,z}^\lambda w,w'\rira  =0. \qforq z',z''\in \mc \qandq w,w'\in \mq.
\end{equation}

Next, we compute the component of the curvature  $(R^\lambda)^\beta_\alpha\in \Lambda^2\mg^*$ defined in \eqref{eq:Rlamab} for each case in the statement. \smallskip

\noindent  Case (1): For $5\leq \alpha <\beta\leq 7$, equation \eqref{eq:Ruv} together with  \eqref{eq:jzconmut} and \eqref{eq:epuepv} give
\begin{align}\nonumber
\lela R_{u,v}^\lambda e_\beta,e_\alpha\rira=&\lela \left(-\frac{(\lambda+1)^2}4 [j(e_\alpha)^+,j(e_\beta)^+]
-\frac{(\lambda-1)^2}4 [j(e_\alpha)^-,j(e_\beta)^-]-\frac19\lambda^2\mu^2 [j(\alpha)^0,j(\beta)^0]\right.\right.\\
&\left.\left.-\frac{(\lambda+1)}6\mu\lambda ([j(e_\alpha)^0,j(e_\beta)^+]+[j(e_\alpha)^+,j(e_\beta)^0])\right)u,v\rira +\frac23  \mu\lambda \lela \tau([u,v])e_\beta,\alpha\rira. \label{eq:Ruvzz}
\end{align} 
In addition, from \eqref{eq:Rzz} we obtain
\begin{equation}\label{eq:Rzzzz}
\lela R_{z,z'}^\lambda e_\beta,e_\alpha\rira =(\frac23 \mu\lambda)^2\lela [\tau(z),\tau(z')]e_\beta,e_\alpha\rira, \forall z,z'\in\mc.
 \end{equation}
Thus, using that $\tau(z)z'=-\tau(z')z$ and $\lela \tau([u,v])z,z'\rira=\lela j(\tau(z)z')u,v\rira$ for every $z,z'\in\mc$ and $u,v\in\mq$,  from \eqref{eq:Ruvzz} and \eqref{eq:Rzzzz},  we get 
\begin{align}\label{eq:r56}
(R^\lambda)^\alpha_\beta=&-\frac{(\lambda+1)^2}4 [j(e_\alpha)^+,j(e_\beta)^+]
-\frac{(\lambda-1)^2}4 [j(e_\alpha)^-,j(e_\beta)^-]-\frac19\lambda^2\mu^2 [j(e_\alpha)^0,j(e_\beta)^0]
\\ \nonumber
&-\frac{(\lambda+1)}6\mu\lambda ([j(e_\alpha)^0,j(e_\beta)^+]+[j(e_\alpha)^+,j(e_\beta)^0])
+\frac23\mu\lambda \left(j(\tau(e_\beta)e_\alpha)+\frac23\mu\lambda [\tau(e_\beta),\tau(e_\alpha)]\right);
\end{align}
here we are identifying $\Lambda^2\mc^*\simeq\so(\mc)$ and $\Lambda^2\mq^*\simeq \so(\mq)$. 
Thus, the formula \eqref{eq: R_567} follows by applying \eqref{eq:Sdiag2} and \eqref{eq:br} into \eqref{eq:r56}.

\smallskip
\noindent Case (2): Now, for  $1\leq \alpha<\beta\leq 4$, 
we have
$(R^\lambda)^\alpha_\beta\in \Lambda^2\mq^*\oplus \Lambda^2\mc^*$ by \eqref{eq:Ruzzz}. Write $(R^\lambda)^\alpha_\beta=\eta^\alpha_\beta+\zeta^\alpha_\beta$, where $\eta^\alpha_\beta\in \Lambda^2\mq^*$ and $\zeta^\alpha_\beta\in \Lambda^2\mc^*$. By \eqref{eq:Rzz} and \eqref{eq:Sdiag2}, for any $e_j,e_k\in \mc$ we get obtain
\begin{align}\nonumber
\zeta^\alpha_\beta(e_j,e_k)=& \lela R^\lambda_{e_j,e_k}e_\beta, e_\alpha\rira \\ \label{eq:sab}
=&\left(\frac{(\lambda-1)^2}4 d_jd_k+\frac{(\lambda-1)}6\lambda\mu(d_j+d_k)+(\frac13\mu\lambda)^2\right)\lela
[j(e_j)^0,j(e_k)^0]e_\beta,e_\alpha \rira\\ \nonumber
&+\frac{(\lambda+1)^2}4\lela[j(e_j)^-,j(e_k)^-]e_\beta,e_\alpha\rira.
\end{align}
Furthermore, for any $e_j,e_k\in\mq$ the equation  \eqref{eq:Ruv} becomes
\begin{align}\label{eq:tab}\begin{split}
\eta^\alpha_\beta(e_j,e_k)=& \lela R^\lambda_{e_j,e_k}e_\beta, e_\alpha\rira\\
=&-\frac{(\lambda+1)^2}4\left(-\lela[e_j,e_\beta]^+,[e_k,e_\alpha]^+\rira+\lela [e_k,e_\beta]^+,[e_j,e_\alpha]^+\rira\right)\\
&-\frac{(\lambda-1)^2}4\left(-\lela [e_j,e_\beta]^-,[e_k,e_\alpha]^-\rira
+\lela [e_k,e_\beta]^-,[e_j,e_\alpha]^-\rira\right)\\
&-\frac19(\lambda\mu)^2\left(-\lela[e_j,e_\beta]^0,[e_k,e_\alpha]^0\rira
+\lela [e_k,e_\beta]^0,[e_j,e_\alpha]^0\rira\right)\\
&+\frac{\lambda^2-1}{4}\left(-\lela [e_j,e_\beta]^-,[e_k,e_\alpha]^+\rira
+\lela [e_k,e_\beta]^+,[e_j,e_\alpha]^-\rira\right.\\
&\left.-\lela [e_j,e_\beta]^+,[e_k,e_\alpha]^-\rira
+\lela [e_k,e_\beta]^-,[e_j,e_\alpha]^+\rira\right)\\
&-\frac{1+\lambda}{6}\lambda\mu\left(-\lela [e_j,e_\beta]^+,[e_k,e_\alpha]^0\rira
+\lela [e_k,e_\beta]^0,[e_j,e_\alpha]^+\rira\right)\\
&\left.-\lela [e_j,e_\beta]^0,[e_k,e_\alpha]^+\rira
+\lela [e_k,e_\beta]^+,[e_j,e_\alpha]^0\rira\right)\\
&+\frac{(\lambda-1)}{6}\lambda\mu\left(-\lela [e_j,e_\beta]^-,[e_k,e_\alpha]^0\rira
+\lela [e_k,e_\beta]^0,[e_j,e_\alpha]^-\rira\right)\\
&\left.-\lela [e_j,e_\beta]^0,[e_k,e_\alpha]^-\rira
+\lela [e_k,e_\beta]^-,[e_j,e_\alpha]^0\rira\right)\\
&+\frac{(1-\lambda)}2\lela j([e_j,e_k])^+e_\beta,e_\alpha\rira +\frac{(\lambda+1)}2\lela j([e_j,e_k])^-e_\beta,e_\alpha\rira\\
&-\frac13\mu\lambda\lela j([e_j,e_k])^0 e_\beta,e_\alpha\rira .
\end{split}
\end{align}
Now, notice that for any $\epsilon_1,\epsilon_2\in\{\pm,0\}$, 
\begin{align}\label{eq:mod}
\lela[e_k,e_\beta]^{\epsilon_1},[e_j,e_\alpha]^{\epsilon_2}\rira=&\sum_{l=5}^7 (\lela[e_k,e_\beta]^{\epsilon_1},e_l\rira\lela e_l,[e_j,e_\alpha]^{\epsilon_2}\rira\\ \nonumber
=&\sum_{l=5}^7(\lela j(e_l)^{\epsilon_1}e_\beta,e_k\rira\lela j(e_l)^{\epsilon_2}e_\alpha,e_j\rira.
\end{align}
In particular, if $\epsilon_1=\epsilon_2=\epsilon$,
\begin{equation}\label{eq:epig}
\lela[e_k,e_\beta]^{\epsilon},[e_j,e_\alpha]^{\epsilon}\rira-\lela[e_j,e_\beta]^{\epsilon},[e_k,e_\alpha]^{\epsilon}\rira
= \sum_{l=5}^7j(e_l)^{\epsilon}e_\alpha\wedge j(e_l)^\epsilon e_\beta(e_j,e_k).
\end{equation}
In addition, if $\epsilon_1=-$ and $\epsilon_2=+$ in \eqref{eq:mod}, we get
\begin{multline}
\lela [e_j,e_\beta]^-,[e_k,e_\alpha]^+\rira-\lela [e_k,e_\beta]^+,[e_j,e_\alpha]^-\rira
+\lela [e_j,e_\beta]^+,[e_k,e_\alpha]^-\rira
-\lela [e_k,e_\beta]^-,[e_j,e_\alpha]^+\rira\\
=\sum_{l=5}^7-\lela j(e_l)^{+}e_\beta,e_k\rira\lela j(e_l)^{-}e_\alpha,e_j\rira+\lela j(e_l)^{+}e_\beta,e_j\rira\lela j(e_l)^{-}e_\alpha,e_k\rira\\
-\lela j(e_l)^{-}e_\beta,e_k\rira\lela j(e_l)^{+}e_\alpha,e_j\rira+\lela j(e_l)^{-}e_\beta,e_j\rira\lela j(e_l)^{+}e_\alpha,e_k\rira\\
= e_k\lrcorner e_ j\lrcorner\left(\sum_{l=5}^7j(e_l)^+e_\beta\wedge j(e_l)^-e_\alpha+j(e_l)^-e_\beta\wedge j(e_l)^+e_\alpha\right).
\end{multline}Similar expressions are obtained for $\epsilon_1\neq\epsilon_2\in\{\pm,0\}$. Using these, \eqref{eq:epig} and \eqref{eq:Sdiag2} in \eqref{eq:tab} we get
\begin{multline}\label{eq:tauab}
\eta^\alpha_\beta=\sum_{l=5}^7
\left(-\frac{(\lambda+1)^2}4d_l^2+\frac{\lambda+1}{3}d_l\mu\lambda-\frac19(\mu\lambda)^2\right)  j(e_l)^{0}e_\alpha\wedge j(e_l)^0 e_\beta-\frac{(\lambda-1)^2}4 j(e_l)^{-}e_\alpha\wedge j(e_l)^- e_\beta\\
-\sum_{l=5}^7\left(\frac{\lambda^2-1}{2}d_l
-\frac{\lambda+1}{3}\mu\lambda d_l\right)(j(e_l)^0e_\alpha\wedge j(e_l)^-e_\beta+j(e_l)^-e_\alpha\wedge j(e_l)^0e_\beta)
\\
+\sum_{1\leq j<k\leq 4}\left\{\frac{(1-\lambda)}2\lela j([e_j,e_k])^+e_\beta,e_\alpha\rira+\frac{(\lambda+1)}2\lela j([e_j,e_k])^-e_\beta,e_\alpha\rira\right.\\
\left.-\frac13\mu\lambda\lela j([e_j,e_k])^0 e_\beta,e_\alpha\rira\right\}e^j\wedge e^k.
\end{multline}
Finally, notice that for   $\epsilon\in\{0,\pm\}$, using \eqref{eq:brbeta} and \eqref{eq:j0} we get  
\begin{equation*}
\lela j([e_j,e_k])^\epsilon e_\beta,e_\alpha\rira= \sum_{l=5}^7 \lela [e_j,e_k],[e_\beta,e_\alpha]^\epsilon\rira e^j\wedge e^k= \sum_{l=5}^7 \lela j(e_l)^\epsilon e_\beta,e_\alpha\rira \lela j(e_l)e_j,e_k\rira.
\end{equation*}
Therefore, \eqref{eq:tauab} becomes
\begin{multline}\label{eq:tauab2}
\eta^\alpha_\beta=\sum_{l=5}^7
\left(-\frac{(\lambda+1)^2}4d_l^2+\frac{\lambda+1}{3}d_l\mu\lambda-\frac19(\mu\lambda)^2\right)  j(e_l)^{0}e_\alpha\wedge j(e_l)^0 e_\beta-\frac{(\lambda-1)^2}4 j(e_l)^{-}e_\alpha\wedge j(e_l)^- e_\beta\\
-\sum_{l=5}^7\left(\frac{\lambda^2-1}{2}d_l
-\frac{\lambda+1}{3}\mu\lambda d_l\right)(j(e_l)^0e_\alpha\wedge j(e_l)^-e_\beta+j(e_l)^-e_\alpha\wedge j(e_l)^0e_\beta)
\\
-\sum_{l=5}^7 \left\{\left(\frac{(\lambda-1)}2d_l-\frac13\mu\lambda \right)\lela j(e_l)^0 e_\alpha,e_\beta\rira +\frac{(\lambda+1)}2\lela j(e_l)^- e_\alpha,e_\beta\rira \right\}j(e_l).
\end{multline}

\end{proof}

\begin{lm}\label{lm:etaab}
    Let $\eta^\alpha_\beta\in \Lambda^2\mq^*$ be the $2$-form given in \eqref{eq:2-form_eta}. Thus, its self-dual part is
\begin{equation*}
(\eta^\alpha_\beta)^+ =
\sum_{l=5}^7  \left\{\left({\frac12}
(a_l-\sum_{\begin{smallmatrix}l=5,6,7,\\s\neq l\end{smallmatrix}}a_s)-\frac{(\lambda-1)^2}8m^2{+}c_{l}d_{l}
\right)(j(e_l)^0)^\beta_\alpha+\left(b_l+{d_l}\frac{(\lambda+1)}{{2}}\right) ( j(e_l)^-)^\beta_\alpha \right\}\sigma_{l-4}^+,
 \end{equation*}
where $m_l\in \R$ satisfies $(j(e_l)^-)^2=-m_l\Id$ for $l=5,6,7$ and $m^2:=m_1^2+m_2^2+m_3^2$.
\end{lm}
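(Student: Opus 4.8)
The plan is to project $\eta^\alpha_\beta$ onto its self-dual part using the fact that $\{\sigma_1^+,\sigma_2^+,\sigma_3^+\}$ from \eqref{eq:sigmabasis} is an orthogonal basis of self-dual $2$-forms on $\mq$ with $g(\sigma_i^+,\sigma_j^+)=2\delta_{ij}$, so that $\beta^+=\frac12\sum_{i=1}^3 g(\beta,\sigma_i^+)\,\sigma_i^+$ for every $2$-form $\beta$. Each summand of \eqref{eq:2-form_eta} is (a scalar times) a decomposable form $Pe_\alpha\wedge Qe_\beta$ with $P,Q\in\{j(e_l)^0,j(e_l)^-\}$, or a multiple of $j(e_l)$ viewed as a $2$-form. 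For the decomposable pieces I would use $g(Pe_\alpha\wedge Qe_\beta,\sigma_i^+)=\sigma_i^+(Pe_\alpha,Qe_\beta)=g(j(e_{i+4})^0Pe_\alpha,Qe_\beta)$, since $\sigma_i^+$ corresponds to $j(e_{i+4})^0$ under $\Lambda^2\mq^*\simeq\so(\mq)$. Thus the whole computation reduces to evaluating the scalars $g(j(e_{i+4})^0Pe_\alpha,Qe_\beta)$, which I would do with three algebraic inputs: the maps $j(e_5)^0,j(e_6)^0,j(e_7)^0$ obey the quaternion relations with $(j(e_l)^0)^2=-\Id$; by \eqref{eq:jzconmut} each $j(e_l)^-$ commutes with every $j(e_{l'})^0$ and satisfies $(j(e_l)^-)^2=-m_l\Id$; and by \eqref{eq:Sdiag2} one has $j(e_l)=-d_l\,j(e_l)^0+j(e_l)^-$, so the self-dual part of $j(e_l)$ as a $2$-form is $-d_l\,\sigma_{l-4}^+$.

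For the first group $a_l\,j(e_l)^0e_\alpha\wedge j(e_l)^0 e_\beta$ I would separate the index $i+4=l$, where $(j(e_l)^0)^2=-\Id$ gives $g(j(e_l)^0j(e_l)^0e_\alpha,j(e_l)^0e_\beta)=(j(e_l)^0)^\beta_\alpha$, from the indices $i+4\neq l$, where the two distinct generators anticommute and one gets $-(j(e_{i+4})^0)^\beta_\alpha$ instead. Summing over $l$ then collapses the ``diagonal minus half the trace'' into the coefficient $\frac12\big(a_l-\sum_{s\neq l}a_s\big)$ of $(j(e_l)^0)^\beta_\alpha$. For the last group $-\frac{(\lambda-1)^2}4\,j(e_l)^-e_\alpha\wedge j(e_l)^- e_\beta$, commutativity of $j(e_l)^-$ with $j(e_{i+4})^0$ together with $(j(e_l)^-)^2=-m_l\Id$ yields $g(j(e_{i+4})^0j(e_l)^-e_\alpha,j(e_l)^-e_\beta)=m_l\,(j(e_{i+4})^0)^\beta_\alpha$ uniformly in $i$, and summing over $l$ assembles these into the term $-\frac{(\lambda-1)^2}8 m^2\,(j(e_l)^0)^\beta_\alpha$.

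The mixed group (coefficient $b_l$) is the delicate one: I would compute the two decomposable pieces $j(e_l)^0e_\alpha\wedge j(e_l)^-e_\beta$ and $j(e_l)^-e_\alpha\wedge j(e_l)^0e_\beta$ separately. On the diagonal $i+4=l$ both contribute $(j(e_l)^-)^\beta_\alpha$ to $\sigma_{l-4}^+$, giving $b_l(j(e_l)^-)^\beta_\alpha$; off the diagonal each produces a term proportional to $g(j(e_{i+4})^0j(e_l)^0\,j(e_l)^-e_\alpha,e_\beta)$, and because $j(e_{i+4})^0j(e_l)^0=-j(e_l)^0j(e_{i+4})^0$ the off-diagonal contributions from the two pieces are equal and opposite, hence cancel. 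Finally, the third group contributes its self-dual part directly via $j(e_l)\mapsto -d_l\sigma_{l-4}^+$, producing $c_ld_l(j(e_l)^0)^\beta_\alpha+\frac{\lambda+1}2 d_l(j(e_l)^-)^\beta_\alpha$. Collecting the coefficients of each $\sigma_{l-4}^+$ then gives the asserted formula.

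The main obstacle is precisely this bookkeeping of the off-diagonal ($i+4\neq l$) contributions: individually they do not vanish, and the content of the lemma is that they either reorganise into the symmetric combinations $\frac12(a_l-\sum_{s\neq l}a_s)$ and $-\frac{(\lambda-1)^2}8 m^2$ (first and last groups) or cancel pairwise (mixed group). Getting the signs right requires careful use of the multiplication table of the self-dual generators $j(e_l)^0$ and of their commutation with the anti-self-dual maps $j(e_l)^-$; once these sign patterns are pinned down, the remaining steps are routine.
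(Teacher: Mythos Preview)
Your proposal is correct and follows essentially the same route as the paper: both project onto the orthogonal basis $\{\sigma_k^+\}$ via $g(Pe_\alpha\wedge Qe_\beta,\sigma_k^+)=\langle j(e_{k+4})^0Pe_\alpha,Qe_\beta\rangle$ and then reduce the four groups of terms using the quaternion relations among the $j(e_l)^0$, the commutation $[j(e_l)^0,j(e_{l'})^-]=0$, the identity $(j(e_l)^-)^2=-m_l\Id$, and $j(e_l)^+=-d_l\,j(e_l)^0$. The only cosmetic difference is in the mixed $b_l$ group: the paper packages the two decomposable pieces into the anticommutator $\{j(e_{k+4})^0,j(e_l)^0\}$ (which vanishes for $k+4\neq l$), whereas you phrase the same fact as an explicit pairwise off-diagonal cancellation; these are the same computation.
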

\begin{proof}
Consider the following 2-form in $\Lambda^2\mq^*$:
\[
\sum_{l=5}^7 p_l\; j(e_l)^{\epsilon_1}e_\alpha\wedge j(e_l)^{\epsilon_2} e_\beta,
\]where $\epsilon_1,\epsilon_2\in\{0,\pm\}$ and $p_l\in\R$.
For any fixed $k=1,2,3$, let $\sigma^+_k$ be the self-dual form in \eqref{eq:sigmabasis}. Canonical computations give
\begin{equation*}
\lela \sum_{l=5}^7p_l j(e_l)^{\epsilon_1} e_\alpha\wedge j(e_l)^{\epsilon_2} e_\beta,\sigma_k^+\rira=\sum_{l=5}^7 p_l\sum_{s,t=1}^4\lela   j(e_l)^{\epsilon_1}e_\alpha,e_s\rira\lela j(e_l)^{\epsilon_2} e_\beta,e_t\rira\lela e^{s}\wedge e^{t},\sigma_k^+\rira.
\end{equation*}Notice that $\lela e^{s}\wedge e^{t},\sigma_k^+\rira= e_t\lrcorner e_s\lrcorner \sigma_k^+=\sigma_k^+(e_s,e_t)=\lela j(e_{k+4})^0e_s,e_t\rira$, for any $s,t=1, \ldots, 4$.
Hence
\begin{align}\label{eq:jjeps}
\lela \sum_{l=5}^7 p_lj(e_l)^{\epsilon_1}e_\alpha\wedge j(e_l)^{\epsilon_2} e_\beta,\sigma_k^+\rira=&\sum_{l=5}^7 p_l \sum_{s=1}^4\lela   j(e_l)^{\epsilon_1}e_\alpha,e_s\rira\lela j(e_l)^{\epsilon_2} e_\beta,j(e_{k+4})^0e_s\rira\\ \nonumber
=&\sum_{l=5}^7p_l \lela j(e_l)^{\epsilon_2} e_\beta,j(e_{k+4})^0j(e_l)^{\epsilon_1}e_\alpha\rira.
\end{align}
First, take $\epsilon_i=-$ for $i=1,2$ and $p_l=-\frac{(\lambda-1)^2}{4}$ in this expression, for $l=5,6,7$. Since $j(e_{k+4})^0$ commutes with $j(e_l)^-$ and $(j(e_l)^-)^2=-m_l^2\Id$ for some constants $m_l$, for all $l=5,6,7$, we obtain
\begin{align*}
-\frac{(\lambda-1)^2}{4}\lela \sum_{l=5}^7  j(e_l)^{-}e_\alpha\wedge j(e_l)^- e_\beta,\sigma_k^+\rira
=&-\frac{(\lambda-1)^2}{4}\sum_{l=5}^7 \lela (j(e_l)^-)^2 e_\beta,j(e_{k+4})^0e_\alpha\rira\\
=&-\frac{(\lambda-1)^2}{4}(\sum_{l=5}^7 m_l^2)\sigma_k^+(e_\alpha,e_\beta)=-\frac{(\lambda-1)^2}{4}m^2\,\sigma_k^+(e_\alpha,e_\beta),
\end{align*}where we have set 
\begin{equation}\label{eq:mdef}
m^2:=\sum_{l=5}^7 m_l^2.
\end{equation}
Therefore, we get
\begin{equation}\label{eq:jj-SD}
-\frac{(\lambda-1)^2}{4}(\sum_{l=5}^7 j(e_l)^{-}e_\alpha\wedge j(e_l)^- e_\beta)^+=-\frac{(\lambda-1)^2}{8} m^2\sum_{k=1}^3\sigma_k^+(e_\alpha,e_\beta)\sigma_k^+.
\end{equation}
Now, consider $\epsilon_i=0$ for $i=1,2$ in \eqref{eq:jjeps}, we recall that $j(e_k)^0j(e_l)^0=-j(e_l)^0j(e_k)^0$ if $k\neq l\in \{5,6,7\}$, then
\begin{equation*}
\lela \sum_{l=5}^7 p_l j(e_l)^{0}e_\alpha\wedge j(e_l)^0 e_\beta,\sigma_k^+\rira
=p_{k+4}\lela  e_\beta,j(e_{k+4})^0e_\alpha\rira-\sum_{\begin{smallmatrix}l=5,6,7\\ l\neq k+4\end{smallmatrix}} p_l \lela  e_\beta,j(e_{k+4})^0e_\alpha\rira.
\end{equation*} So the self-dual part of $\sum_{l=5}^7 a_lj(e_l)^{0}e_\alpha\wedge j(e_l)^0 e_\beta$, for $a_l$ defined in \eqref{eq:abc} is 
\begin{multline}\label{eq:jj0SD}
(\sum_{l=5}^7 a_lj(e_l)^{0}e_\alpha\wedge j(e_l)^0 e_\beta)^+=\\\frac12 ((a_5-a_6-a_7)\sigma_1^+(e_\alpha,e_\beta)\sigma_1^++(a_6-a_5-a_7)\sigma_2^+(e_\alpha,e_\beta)\sigma_2^++(a_7-a_5-a_6)\sigma_3^+(e_\alpha,e_\beta)\sigma_3^+).
\end{multline}
Finally, we put $\epsilon_1=0$, $\epsilon_2=-$ in \eqref{eq:jjeps}. Using the fact that $j(e_l)^-j(e_{k+4})^0=-j(e_{k+4})^0j(e_l)^-$, for any $l=5,6,7$, $k=1,2,3$ (see \eqref{eq:j0} and \eqref{eq:jzconmut}), we get
\begin{multline*}
\lela
\sum_{l=5}^7 p_l(j(e_l)^0e_\alpha\wedge j(e_l)^-e_\beta+j(e_l)^-e_\alpha\wedge j(e_l)^0e_\beta),\sigma_k^+\rira\\
=\sum_{l=5}^7p_l(\lela j(e_l)^{-} e_\beta,j(e_{k+4})^0j(e_l)^{0}e_\alpha\rira+\lela j(e_l)^{0} e_\beta,j(e_{k+4})^0j(e_l)^{-}e_\alpha\rira)\\
=\sum_{l=5}^7p_l\lela j(e_l)^{-} e_\beta,(j(e_{k+4})^0j(e_l)^{0}+j(e_l)^{0}j(e_{k+4})^0)e_\alpha\rira=2p_{k+4}\lela j(e_{k+4})^{-} e_\alpha,e_\beta\rira.
\end{multline*}
Hence, taking $p_l=c_l$ as defined in \eqref{eq:abc}, we obtain
\begin{equation}\label{eq:jj-0SD}
(\sum_{l=5}^7 c_l(j(e_l)^0e_\alpha\wedge j(e_l)^-e_\beta+j(e_l)^-e_\alpha\wedge j(e_l)^0e_\beta))^+=\sum_{l=5}^7 c_l\lela j(e_l)^{-} e_\alpha,e_\beta\rira\sigma_{l-4}^+.
\end{equation}
Therefore, the result follows by replacing \eqref{eq:jj-SD}, \eqref{eq:jj0SD}, \eqref{eq:jj-0SD} and $j(e_l)^+=-d_lj(e_l)^0$ into \eqref{eq:2-form_eta}.
\end{proof}

\subsection{The Instanton Condition for \texorpdfstring{$\nabla^\lambda$}{D}.}\label{app:diagonal}

We continue with the notation and the assumption of the previous subsections.

By definition, the connection $\nabla^\lambda$ is a $\rG_2$-instanton if it satisfies \eqref{eq: instaton_equation_intro}.
In terms of the components $(R^\lambda)^\beta_\alpha$ of the curvature  in the basis $\{e_1, \ldots, e_7\}$ (see \eqref{eq:Rlamab}), we have that $\nabla^\lambda$ is an instanton if and only if 
$(R^\lambda)^\beta_\alpha\wedge \psi=0$ for every $\alpha,\beta=1, \ldots,7$.
We shall split this condition in three as follows:
\begin{eqnarray}
(R^\lambda)^\beta_\alpha\wedge \psi&=&0, \quad \mbox{ for }\alpha,\beta=5, \ldots,7.\label{eq:57}\\
(R^\lambda)^\beta_\alpha\wedge \psi&=&0, \quad \mbox{ for }\alpha,\beta=1, \ldots,4.\label{eq:14}\\
(R^\lambda)^\beta_\alpha\wedge \psi&=&0, \quad \mbox{ for }\alpha=1, \ldots,4, \beta=5,\ldots,7.\label{eq:1457}
\end{eqnarray}

In the sequel give necessary and sufficient conditions on the matrix $S={\rm diag}(d_5,d_6,d_7)$ and $\lambda$ for the above conditions to be satisfied. As in the previous subsection, $\mu$ denotes the trace of $S$, namely, $\mu=d_5+d_6+d_7$.

\begin{lm}\label{lm:insteigenv} Equation \eqref{eq:57} holds if and only if one of the following holds:
\begin{enumerate}
\item $S=0$ and $\lambda$ is arbitrary,
\item $S\neq 0$ with $\mu=0$ and $\lambda=-1$,
\item $S=\frac{\mu}3\Id$ with $\mu\neq 0$, and $\lambda=\{-\frac13,1\}$.
\item $S\neq 0$ with $\mu\neq 0$, and $S$ has exactly two eigenvalues $d_i=d_j$, $d_k=\frac12(\sqrt[3]{4}-2)d_j\neq 0$ and $\lambda=\sqrt[3]{4}-1$.
\end{enumerate}
\end{lm}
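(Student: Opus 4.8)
The plan is to substitute the explicit curvature components \eqref{eq: R_567} into condition \eqref{eq:57} and reduce it to a system of three polynomial equations in $d_5,d_6,d_7$ and $\lambda$. First I would decompose each $(R^\lambda)^\alpha_\beta$, for an even permutation $(\alpha,\beta,\gamma)$ of $(5,6,7)$, into its self-dual part on $\mq$, its anti-self-dual part on $\mq$, and its part on $\mc$. Using \eqref{eq:Sdiag2} to write $j(e_\gamma)=-d_\gamma j(e_\gamma)^0+j(e_\gamma)^-$, the self-dual $\mq$-component is a multiple of $j(e_\gamma)^0\simeq\sigma_{\gamma-4}^+$, the anti-self-dual component collects $j(e_\gamma)^-$ together with the commutator $[j(e_\alpha)^-,j(e_\beta)^-]\in\so(\mq)^-$, and the $\mc$-component is $-(\frac23\mu\lambda)^2\tau(e_\gamma)$.

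Next I would wedge with $\psi$ as in \eqref{eq:stphi}. The key observations are that the anti-self-dual part dies (it meets only the self-dual factors $\sigma_i^+$ of $\psi$, and $\sigma_i^-\wedge\sigma_j^+=0$), that $\sigma_i^+\wedge\sigma_j^+=2\delta_{ij}e^{1234}$, and that $\tau(e_\gamma)$ corresponds to the $2$-form $e^{\alpha\beta}$ on $\mc$, so it pairs only with $e^{1234}$. After the $\mu^2\lambda^2$ contributions cancel, this yields, for each even permutation,
\begin{equation*}
(R^\lambda)^\alpha_\beta\wedge\psi=\left((\lambda+1)^2 d_\alpha d_\beta-\frac{2(\lambda+1)}{3}\lambda\mu(d_\alpha+d_\beta)+\frac{4}{3}\lambda\mu\, d_\gamma\right)e^{1234}\wedge e^{\alpha\beta}.
\end{equation*}
Since the $6$-forms $e^{1234}\wedge e^{\alpha\beta}$ are linearly independent, \eqref{eq:57} is equivalent to the vanishing of the three scalar coefficients $E_\gamma$ obtained as $\gamma$ runs over $5,6,7$; this system is symmetric under permutations of the indices.

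I would then solve the system, recalling the standing assumption $\lambda\neq0$. Taking pairwise differences factorises them as $(d_i-d_j)\big((\lambda+1)^2 d_k-q-s\big)=0$ with $q=\frac{2(\lambda+1)}{3}\lambda\mu$, $s=\frac43\lambda\mu$ and $\{i,j,k\}=\{5,6,7\}$, which organises the case analysis by how many of the $d_i$ coincide. If $S=0$ all three hold trivially, giving (1). If $S\neq0$ but $\mu=0$ the equations reduce to $(\lambda+1)^2 d_\alpha d_\beta=0$; if $\lambda\neq-1$ this leaves at most one $d_i$ nonzero, whence $S=0$ by $\mu=0$, a contradiction, so $\lambda=-1$, giving (2). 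If $\mu\neq0$ and all $d_i$ are equal, then $S=\frac\mu3\Id$ and a single equation collapses to $-3\lambda^2+2\lambda+1=0$, i.e. $\lambda\in\{-\frac13,1\}$, giving (3). If $\mu\neq0$ and the $d_i$ are pairwise distinct, the factorised differences force $(\lambda+1)^2 d_i=q+s$ for all $i$, hence $d_5=d_6=d_7$ (as $\lambda\neq-1$), a contradiction; so this case is empty.

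The main effort, and the expected obstacle, is the remaining case $\mu\neq0$ with exactly two equal eigenvalues, say $d_5=d_6=d\neq d_7=e$, which produces (4). Here the difference factor gives $(\lambda+1)^2 d=q+s$, and feeding this into $E_7$ yields the pair of relations $(\lambda+1)^2 d^2=\frac43\lambda\mu^2$ and $(\lambda+1)^2 d=\frac{2\lambda(\lambda+3)}{3}\mu$. One checks $d\neq0$ (else $E_7=\frac43\lambda e^2\neq0$), so eliminating $\mu$ between these gives $\lambda(\lambda+3)^2=3(\lambda+1)^2$, that is $(\lambda+1)^3=4$, hence $\lambda=\sqrt[3]{4}-1$; the second relation then gives $\mu/d=\frac{\lambda+3}{2}$ and therefore $e=\frac{\lambda-1}{2}d=\frac12(\sqrt[3]{4}-2)d$, with $e\neq0$ and $e\neq d$ as required. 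The converse is a direct substitution of each of (1)--(4) back into the three coefficients $E_\gamma$ to verify they vanish.
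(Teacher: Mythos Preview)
Your proof is correct and follows essentially the same approach as the paper: both compute $(R^\lambda)^\alpha_\beta\wedge\psi$ explicitly from \eqref{eq: R_567} and \eqref{eq:stphi}, observe that the anti-self-dual piece and the cross-terms with $e^{1234}$ vanish, and reduce \eqref{eq:57} to the same three scalar equations (the paper rewrites them via $d_\alpha+d_\beta=\mu-d_\gamma$ as the diagonal matrix identity \eqref{eq:CSI0}). The only organisational difference is in the elimination step: the paper multiplies \eqref{eq:CSI0} by $S$ (using $CS=\det(S)\Id$) before taking pairwise differences, obtaining the factorisation $(d_i-d_j)\mu\big((\lambda+1)d_k-2(d_i+d_j)\big)=0$, whereas you take direct differences $E_i-E_j$ to get $(d_i-d_j)\big((\lambda+1)^2 d_k-\tfrac{2\lambda(\lambda+3)}{3}\mu\big)=0$. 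Both routes lead to the same case analysis and the same cubic $(\lambda+1)^3=4$ in case~(4); your version is slightly more direct since it avoids the auxiliary multiplication by $S$.
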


\begin{proof}The curvature components $(R^\lambda)^\alpha_\beta$ with $\alpha,\beta=5,6,7$ were computed in \eqref{eq: R_567} and $\psi$ has the form given in \eqref{eq:stphi}. 
Hence, if $(\alpha,\beta,\gamma)$ is an even permutation of $(5,6,7)$, we have
\begin{eqnarray*}
(R^\lambda)^\alpha_\beta\wedge\psi
&=&
\left((\lambda+1)^2 d_\alpha d_\beta-\frac{2}{3}(\lambda+1)\mu\lambda(d_\alpha+d_\beta)+\frac{4}{9}(\lambda\mu)^2 +\frac{4}{3}\mu\lambda d_\gamma -\left(\frac{2}{3}\mu\lambda\right)^2\right)e^{\alpha\beta}
\wedge e^{1234}.
\end{eqnarray*}
Therefore,  \eqref{eq:57} is equivalent to the system
\begin{equation}\label{eq:CSI0}
(\lambda+1)^2 C+2(\frac{\lambda}3+1)\lambda\mu S-\frac23(\lambda+1)\lambda\mu^2 \Id=0,
\end{equation} where $C=\diag(d_6d_7,d_7d_5,d_5d_6)$.

Notice that $S=0$ is a solutions of this system for any $\lambda$, so we assume $S\neq 0$ from now on. In addition, one can easily check that if $S\neq 0$ and $\lambda$ gives a solution of this system, then $\lambda=-1$ if and only if  $\mu=0$. In addition, $\lambda=0$ in \eqref{eq:CSI0} implies $S=0$. Hence, we need to find all possible $\lambda\neq 0$ that solve \eqref{eq:CSI0} for $S\neq 0$ with $\mu\neq 0$.

Multiplying \eqref{eq:CSI0} by $S$ and using that $CS=\det(S)\Id$, we get
\begin{equation*}
(\lambda+1)^2 \det(S) \Id +2(\frac{\lambda}3+1)\lambda\mu S^2-\frac23(\lambda+1)\lambda\mu^2 S=0.
\end{equation*}
By looking at the diagonal components of this matrix equation and taking the difference between each two of them, this system becomes equivalent to
\begin{equation}\label{eq:SSI}
(d_i-d_j)\mu((\lambda+1)d_k-2(d_i+d_j))=0, \qquad \mbox{ for all }i,j,k\in\{5,6,7\},\;i\neq j\neq k\neq i, 
\end{equation}

Assume that $S\neq 0$, with $\mu\neq 0$, and $\lambda$ solve \eqref{eq:CSI0}, then these satisfy  \eqref{eq:SSI} as well. This implies that either $d_5=d_6=d_7$ or there is some  $i,j\in\{5,6,7\}$ for which $d_i\neq d_j$.
In the former case we get $S=\frac{\mu}{3}\Id$ and thus $C=\frac{\mu^2}9\Id$ so \eqref{eq:CSI0} becomes
\[
\frac{\mu^2}9(3\lambda+1)(\lambda-1)=0.
\]Since $\mu\neq 0$, $\lambda$ is either $-\frac13$ or $1$.

Suppose now that $d_i\neq d_j$ for some $i,j$, then by \eqref{eq:SSI}, for  $k\neq i,j$, one gets
\begin{equation*}
d_k(\lambda+1)=2(d_i+d_j).
\end{equation*}
Since $\lambda\neq-1$ because $\mu\neq 0$, we obtain $d_k=2(d_i+d_j)/(\lambda+1)$. Replacing $d_k$ by this expression in \eqref{eq:SSI}, for $i,j\neq k$ we get
\[
0=(d_i-d_k)(\lambda+1)d_j-2(d_i+d_k)=\frac{(\lambda+3)}{(\lambda+1)}(\lambda-1)d_i-2d_j)((\lambda-1)d_j-2d_i).
\] One can easily verify that $\lambda=-3$ and $S\neq 0$ is not a solution of \eqref{eq:CSI0}. So we have either $d_i=(\lambda-1)d_j/2$ or $d_j=(\lambda-1)d_i/2$. If $d_i=(\lambda-1)d_j/2$ then $d_k=d_j\neq d_i$, and if $d_j=(\lambda-1)d_i/2$ then $d_k=d_i\neq d_j$. In particular, $S$ has exactly 2 eigenvalues. 

Without loss of generality, we may assume $d_5=d_6$ and $d_5\neq d_7=(\lambda-1)d_5/2$. Notice that $d_5\neq 0$ since otherwise we would have $d_5=d_6=d_7$. 
Using the equalities $d_5=d_6$ and $d_5\neq d_7=(\lambda-1)d_5/2$ in \eqref{eq:CSI0}, the difference between the last two equations in the diagonal imply
\[
d_5^2(\lambda-3)(\lambda^3+3\lambda^2+3\lambda-3)=0.
\] However, $d_5\neq0$ and $\lambda\neq 3$ since, in both cases, by \eqref{eq:CSI0}, we would $d_5=0$ an thus $d_5=d_7$ contradicting our assumption. Therefore, $\lambda$ satisfies $\lambda^3+3\lambda^2+3\lambda-3=0$.

The only real root of $\lambda^3+3\lambda^2+3\lambda-3$ is $\lambda=\sqrt[3]{4}-1$. It is possible to check that $\lambda=\sqrt[3]{4}-1$, $d_6=d_5$ and $d_7=\frac12(\lambda-1)d_5=\frac12(\sqrt[3]{4}-2)d_5$ satisfy \eqref{eq:CSI0}.

\end{proof}

The following result shows, in particular, that if the first two equations for the instanton condition hold, then $S$ is a non-zero multiple of the identity.

\begin{lm}\label{lm:reduc2}
Equations \eqref{eq:57} and \eqref{eq:14} hold simultaneously if and only if the following conditions hold:
\begin{enumerate}
\item \label{it:lam} either $\lambda=1$, or both $\lambda=-\frac13$ and $j(z)^-=0$ for all $z\in\mc$,
\item \label{it:Sid} $S=\frac{\mu}3\Id$ with $\mu\neq 0$, 
\item \label{it:jc} the map $j:\mc\to\so(\mq)$ satisfies
\begin{equation}\label{eq:jcond}
[j(z),j(z')]=\frac23 \mu j(\tau(z)z'), \qquad \forall z,z'\in\mc,
\end{equation}where $\tau\in\so(\mc)$ is defined by in \eqref{eq:taudef}.
\end{enumerate}
\end{lm}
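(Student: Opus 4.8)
The plan is to prove both directions by feeding the already-known solution set of \eqref{eq:57} into the single extra computation coming from \eqref{eq:14}. Lemma \ref{lm:insteigenv} tells us that whenever \eqref{eq:57} holds, the pair $(S,\lambda)$ lies in one of its four listed cases; so the whole task is to decide which of those cases survive \eqref{eq:14} and what \eqref{eq:14} imposes in addition. The heart of the argument is therefore the evaluation of $(R^\lambda)^\alpha_\beta\wedge\psi$ for $\alpha,\beta\in\{1,2,3,4\}$.

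To carry this out I would write $(R^\lambda)^\alpha_\beta=\zeta^\alpha_\beta+\eta^\alpha_\beta$ as in Lemma \ref{lm:Rab}(2), with $\zeta^\alpha_\beta\in\Lambda^2\mc^*$ and $\eta^\alpha_\beta\in\Lambda^2\mq^*$, and use the shape of $\psi$ in \eqref{eq:stphi}. A degree count gives $\zeta^\alpha_\beta\wedge\psi=\zeta^\alpha_\beta\wedge e^{1234}$, while $\eta^\alpha_\beta\wedge\psi$ sees only the self-dual part $(\eta^\alpha_\beta)^+$ of Lemma \ref{lm:etaab}, paired against the $\sigma_i^+$. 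Reading off the coefficients of the six independent $6$-forms $e^{1234}\wedge e^{jk}$ (with $j<k$ in $\{5,6,7\}$) and letting $(\alpha,\beta)$ run over $\{1,2,3,4\}$ converts \eqref{eq:14} into three identities in $\so(\mq)$, one per pair $(j,k)$. The structural key is that $\so(\mq)=\so(\mq)^+\oplus\so(\mq)^-$ is a sum of commuting ideals and that $\zeta^\alpha_\beta$ and $(\eta^\alpha_\beta)^+$ respect it: all terms built from $j(e_l)^0$ and brackets of $0$-parts lie in $\so(\mq)^+$, and those built from $j(e_l)^-$ and brackets of $-$parts lie in $\so(\mq)^-$. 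Hence each of the three identities splits into a $\so(\mq)^+$ and a $\so(\mq)^-$ equation that must vanish separately. Using $[j(e_j)^0,j(e_k)^0]=-2\,j(e_l)^0$ together with $j(e_l)^0\neq0$ (cf.\ \eqref{eq:br} and \eqref{eq:Sdiag2}), the $\so(\mq)^+$ equations collapse to three scalar relations among $d_5,d_6,d_7,\lambda$ and the constant $m^2$ of \eqref{eq:mdef}, whereas the $\so(\mq)^-$ equations become relations expressing $[j(e_j)^-,j(e_k)^-]$ as a multiple of $j(e_l)^-$.

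For the forward implication I then substitute the four cases of Lemma \ref{lm:insteigenv}. Cases $(1)$ and $(2)$ are ruled out because there $\mu=0$, and the $\so(\mq)^\pm$ equations then become inconsistent with $\dim\mg'\geq2$: a sum of the three $\so(\mq)^+$ relations forces a vanishing combination of $d_5^2+d_6^2+d_7^2$ and $m^2$, together with the vanishing of all brackets $[j(e_j)^-,j(e_k)^-]$, which makes $\mg$ abelian or gives $\dim\mg'\leq1$. Case $(4)$ is discarded by substituting its prescribed eigenvalues and $\lambda=\sqrt[3]{4}-1$ into the three scalar relations, which then have no solution with $S\neq0$. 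Only case $(3)$ survives, giving $S=\frac\mu3\Id$ with $\mu\neq0$, i.e.\ condition $(2)$ of the statement. In this surviving case the common $\so(\mq)^-$ equation reads exactly $[j(z),j(z')]=\frac23\mu\,j(\tau(z)z')$, which is \eqref{eq:jcond}; and when $\lambda=-\tfrac13$ the same $\so(\mq)^-$ equation additionally forces $j(z)^-=0$ for all $z\in\mc$, yielding condition $(1)$. For the converse one assumes $(1)$--$(3)$: then $S=\frac\mu3\Id$ with $\mu\neq0$ and $\lambda\in\{-\tfrac13,1\}$, so \eqref{eq:57} holds by Lemma \ref{lm:insteigenv}; substituting $S=\frac\mu3\Id$ into \eqref{eq:2-form_sigma} and Lemma \ref{lm:etaab} and invoking \eqref{eq:jcond} (and $j(z)^-=0$ when $\lambda=-\tfrac13$) makes the $\so(\mq)^+$ scalars identities in $\lambda$ and cancels the $\so(\mq)^-$ parts, so \eqref{eq:14} holds.

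I expect the main obstacle to be bookkeeping rather than conceptual: assembling $\zeta^\alpha_\beta\wedge e^{1234}$ and the $(\eta^\alpha_\beta)^+$ pairings with the correct signs dictated by \eqref{eq:stphi}, and performing the $\so(\mq)^+/\so(\mq)^-$ projection so that the three scalar relations come out cleanly. The two delicate points are verifying that in case $(3)$ with $\lambda=1$ the $\so(\mq)^+$ scalars vanish identically (a polynomial identity in $\lambda$ and $d=\mu/3$), and confirming the exclusion of the exotic case $(4)$; a sign slip in either place would alter the final list of solutions.
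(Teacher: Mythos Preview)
Your approach is essentially identical to the paper's: decompose $(R^\lambda)^\alpha_\beta=\zeta^\alpha_\beta+\eta^\alpha_\beta$ via Lemma \ref{lm:Rab}\eqref{it:R_14}, wedge with $\psi$ using \eqref{eq:stphi} and Lemma \ref{lm:etaab}, split into the $\so(\mq)^+$ and $\so(\mq)^-$ parts to obtain two systems (the paper's \eqref{eq:sys1} and \eqref{eq:sys2}), and then run through the four cases of Lemma \ref{lm:insteigenv} exactly as you describe. One small correction of detail: at $\lambda=-\tfrac13$ it is the $\so(\mq)^+$ scalar relation, via the $m^2$ term of \eqref{eq:mdef}, that forces $j(z)^-=0$ for all $z\in\mc$, not the $\so(\mq)^-$ bracket equation (which then holds trivially).
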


\begin{proof}
First, we shall write a system of equations which is equivalent to \eqref{eq:14}. By \ref{it:R_14} in Lemma \ref{lm:Rab}, for any $\alpha,\beta\in\{1, \ldots, 4\}$,  $(R^\lambda)^\alpha_\beta=\zeta^\alpha_\beta+\eta_\beta^\alpha$ where $\zeta^\alpha_\beta\in\Lambda^2\mc^*$ and $\eta_\beta^\alpha\in\Lambda^2\mq^*$.  We compute separately $\eta^\alpha_\beta\wedge \psi$ and $\zeta^\alpha_\beta\wedge \psi$.

Since $\psi$ has the form \eqref{eq:stphi} and $\eta^\alpha_\beta\in\Lambda^2\mq^*$ we have $\eta^\alpha_\beta\wedge e^{1234}=0$  and thus
\begin{eqnarray}
    \label{eq:etwsp}
\eta^\alpha_\beta\wedge \psi &=&\eta^\alpha_\beta\wedge(\sigma_1^+\wedge e^{67}+\sigma_2^+\wedge e^{75}+\sigma_3^+\wedge e^{56})\\
&=&(\eta^\alpha_\beta)^+\wedge(\sigma_1^+\wedge e^{67}+\sigma_2^+\wedge e^{75}+\sigma_3^+\wedge e^{56}).\non
\end{eqnarray} Besides, $(\eta_\beta^\alpha)^+$ is given in Lemma \ref{lm:etaab}, from which we obtain 
\begin{multline*}
(\eta^\alpha_\beta)^+ =
\sum_{l=5}^7  \left\{\left({\frac12}
(a_l-\sum_{\begin{smallmatrix}l=5,6,7,\\s\neq l\end{smallmatrix}}a_s)-\frac{(\lambda-1)^2}8m^2{+}c_{l}d_{l}
\right)(j(e_l)^0)^\beta_\alpha+\left(b_l+{d_l}\frac{(\lambda+1)}{{2}}\right) ( j(e_l)^-)^\beta_\alpha \right\}\sigma_{l-4}^+
 \end{multline*}and therefore, 
 the wedge product in \eqref{eq:etwsp} equals to
  \begin{multline}
     \eta^\alpha_\beta \wedge \psi =
  \sum_{l=5}^7  \left\{\left(
(a_l-\sum_{\begin{smallmatrix}l=5,6,7,\\s\neq l\end{smallmatrix}}a_s)-\frac{(\lambda-1)^2}4m^2{+}c_{l}d_{l}
\right)(j(e_l)^0)^\beta_\alpha\right.\\
\hspace{2cm}\left.\phantom{\sum_{\begin{smallmatrix}l=5,6,7,\\s\neq l\end{smallmatrix}}}
+\left(2b_l+{d_l}{(\lambda+1)}\right) ( j(e_l)^-)^\beta_\alpha\right\}e^{1234rt} \label{eq:ewsp},
\end{multline}
where $(l,r,t)$ is an even permutation of $(5,6,7)$.

 Now we compute $\zeta^\alpha_\beta\wedge \psi$, where $\zeta^\alpha_\beta$ is given in \eqref{it:R_14} of Lemma \ref{lm:Rab}. Using  this together with the expression of $\psi$ as in \eqref{eq:stphi}, we get
\begin{multline}\label{eq:swsp}
    \zeta^\alpha_\beta\wedge \psi=
\sum_{5\leq j<k\leq 7}\left\{
\left(-
\frac{(\lambda-1)^2}2 d_jd_k+\frac{\lambda-1}3\lambda \mu( d_j+d_k)-2(\frac13\mu\lambda)^2
\right)
(j(e_r)^0)^\alpha_\beta\right.\\
\left.
+\frac{(\lambda+1)^2}4[j(e_j)^-,j(e_k)^-]^\alpha_\beta\right\}e^{1234jk},
\end{multline}where $(j,k,r)$ is an even permutation of $(5,6,7)$.

Clearly, \eqref{eq:14}  is equivalent to 
 the equality $ - (\eta^\alpha_\beta)^+ \wedge \psi  = \zeta^\alpha_\beta\wedge \psi$ for every $\alpha,\beta\in\{1,\ldots, 4\}$. By \eqref{eq:ewsp} and \eqref{eq:swsp}, this equality holds if and only if for every even permutation  $(l,j,k)$  of $(5,6,7)$ one has
 \begin{multline}
   \left(
(a_l-\sum_{\begin{smallmatrix}l=5,6,7,\\s\neq l\end{smallmatrix}}a_s)-\frac{(\lambda-1)^2}4m^2{+}c_{l}d_{l}
\right)j(e_l)^0+\left(2b_l+{d_l}{(\lambda+1)}\right) j(e_l)^-=\\
=
\left(-
\frac{(\lambda-1)^2}2 d_jd_k+\frac{\lambda-1}3\lambda \mu( d_j+d_k)-2(\frac13\mu\lambda)^2
\right)
j(e_{l})^0+\frac{(\lambda+1)^2}4[j(e_j)^-,j(e_k)^-].
  \end{multline}
  By looking at the  at the self-dual and anti-self-dual components of this system we get that \eqref{eq:14} holds if and only if for every even permutation $(l,j,k)$ of $(5,6,7)$ the following systems holds
 \begin{eqnarray} 
 \label{eq:sys1}
(a_l-\sum_{\begin{smallmatrix}l=5,6,7,\\s\neq l\end{smallmatrix}}a_s)-\frac{(\lambda-1)^2}4m^2{+}2c_{l}d_{l}
&=&-
\frac{(\lambda-1)^2}2 d_jd_k+\frac{\lambda-1}3\lambda \mu( d_j+d_k)-\frac29(\mu\lambda)^2\\
 \label{eq:sys2}\left(2b_l+{d_l}{(\lambda+1)}\right) j(e_l)^-&=&\frac{(\lambda+1)^2}4[j(e_j)^-,j(e_k)^-].
 \end{eqnarray}

Consequently,  \eqref{eq:57} and \eqref{eq:14} hold simultaneously if and only if $S$ and $\lambda$ are as listed in Lemma  \ref{lm:insteigenv} and also satisfy \eqref{eq:sys1} and \eqref{eq:sys2} above. 
The last part of the proof consist in assuming each possibility for $S$ and $\lambda$ listed in Lemma  \ref{lm:insteigenv} and analyzing the solutions of \eqref{eq:sys1} and \eqref{eq:sys2}.

Assume (1) in Lemma \ref{lm:insteigenv} holds, that is,  $S=0$. The values in \eqref{eq:abc} verify $a_l=b_l=c_l=0=\mu$ and  $j(z)=j(z)^-$ for all $z\in\mc$ by \eqref{eq:Sdiag2}. In particular, the image of $j:\mc\to\so(\mq)$ is contained in $\so(\mq)^-\simeq\so(3)$. With these coefficients, \eqref{eq:sys1} reduces to the sole equation
\[
0=(\lambda-1)^2m^2.
\] Hence either $\lambda=1$ or $m=0$. By \eqref{eq:mdef}, the latter is equivalent to $j^-(z)=0$ which, by our assumption, is equivalent to $\mg$ abelian, leading to a contradiction. Also, if $\lambda=1$,
\eqref{eq:sys2}  is equivalent to 
the image of the map $j:\mc\to\so(\mq)^-$ being abelian. Since $\so(\mq)^-\simeq\so(3)$, the latter is possible only if $\dim {\rm Im} j\leq 1$. However, $\dim \mg'\geq 2$,    $\mg'\subset \mc$ and the fact that $\ker j\cap \mg'=0$ implies  $\dim {\rm Im} j\geq 2$. 
Therefore, \eqref{eq:sys2} does not hold for $S=0$ (and any $\lambda\in\R$).

Now suppose that $S\neq 0$ with $\mu\neq 0$ and $\lambda=-1$, that is, (2) in Lemma \ref{lm:insteigenv} holds. In this case, the constants in \eqref{eq:abc} are $a_l=0=b_l$ and $c_l=-d_l$. Hence, replacing these values in 
\eqref{eq:sys1} one obtains
\[
-m^2-2d_l=-2d_jd_k
\]for every even permutation $(l,j,k)$ of $(5,6,7)$. Using that $d_7=-(d_5+d_6)$ in the equations obtained for $(l,j,k)=(5,6,7)$ and $(6,7,5)$, and adding those, we get
\[
-(m^2+d_5^2+d_6^2)=(d_5+d_6)^2
\]so we must have $m=0$ and $d_5=d_6=d_7=0$. Hence $S\neq 0$ with $\mu\neq0$ and $\lambda=-1$ does not satisfy the system above. 

Now, let $S$ be a symmetric matrix satisfying (4) in Lemma \ref{lm:insteigenv}, for simplicity, assume $d_5=d_6$, $d_7=\frac12(\sqrt[3]{4}-2)d_5\neq 0$ and $\lambda=\sqrt[3]{4}-1$; the other cases are analogous.

In this case, the system \eqref{eq:sys1} reduces to only two equation. Taking their difference, we get $d_5^2=0$, which implies $S=0$ contradicting the fact that $S$ has two different eigenvalues.



Finally, suppose $S=\frac{\mu}3\Id$ with $\mu\neq 0$, i.e. (3) in Lemma \ref{lm:insteigenv} holds. Notice first that from \eqref{eq:Sdiag2} we have $j(e_{i+4})^+=-\frac{\mu}3j(e_{i+4})^0$ for $i=1,2,3$, and thus, by \eqref{eq:br}, one has
\begin{equation}\label{eq:jj+}
[j(z)^+,j(z')^+]=\frac23\mu\, j(\tau(z)z')^+,\qquad \forall z,z'\in\mc,
\end{equation}
 where $\tau\in\so(\mc)$ is as in \eqref{eq:taudef}. Since for any $z,z'\in\mc$,  $j(z)=j(z)^++j(z)^-$ and $[j(z)^+,j(z')^-]=0$, it is clear from \eqref{eq:jj+} that \eqref{eq:jcond} holds if and only if 
\begin{equation}\label{eq:jcondm}
[j(z)^-,j(z')^-]=\frac23\mu \,j(\tau(z)z')^-, \qquad \forall z,z'\in\mc.
\end{equation}

Now, assume further that $\lambda=-\frac13$,
then the coefficients in \eqref{eq:abc} verify $a_l=-\frac4{81}\mu^2$, $b_l=\frac4{27}\mu$ and $c_l=-\frac19\mu$, for all $l=1,2,3$. Hence  \eqref{eq:sys1} reduces to $m=0$, which is equivalent to $j(z)^-=0$ for all $z\in\mc$ due to \eqref{eq:mdef}. Notice that the later automatically implies \eqref{eq:sys2}.
Therefore, when $S=\frac{\mu}3\Id$ with $\mu\neq 0$ and $\lambda=-\frac13$, \eqref{eq:sys1}-\eqref{eq:sys2} hold if and only if $j(z)^-=0$ for all $z\in\mc$. If this is the case, \eqref{eq:jcondm} holds trivially and thus \eqref{eq:jcond} is valid.

It remains to treat the case $S=\frac{\mu}3\Id$ with $\mu\neq 0$ and $\lambda=1$. In this situation, $a_l=b_l=0$ and $c_l=-\frac13\mu$, for all $l=1,2,3$, and with these values, it is easy to check that \eqref{eq:sys1} is automatically satisfied. 
Moreover,  \eqref{eq:sys2} becomes
\begin{eqnarray}
{}[j(e_j)^-,j(e_k)^-]=\frac23 \mu\, j(e_l)^-, 
\end{eqnarray}
for all $(l,j,k)$ even permutation of $(5,6,7)$, which is equivalent to \eqref{eq:jcondm} and thus to \eqref{eq:jcond}.

This finishes the proof of the equivalence between (1)-(2)-(3) and \eqref{eq:57}-\eqref{eq:14}.
\end{proof}

A straightforward consequence of this result is that $\tr S=\mu\neq0$ when the connection $\nabla^\lambda$ is an instanton. This implies that the  torsion form $\tau_0$ of $\varphi$ is non-zero, because of \eqref{eq:t0}. Hence we get:
\begin{cor}\label{cor:notpcc}
If $\nabla^\lambda$ is an instanton, then $\varphi$ is not purely coclosed.
\end{cor}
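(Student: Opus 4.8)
The plan is to deduce this directly from Lemma \ref{lm:reduc2}, since the substantive work has already been carried out there. First I would recall that, by definition, $\nabla^\lambda$ being a $\rG_2$-instanton is equivalent to the vanishing of $(R^\lambda)^\beta_\alpha \wedge \psi$ for all $\alpha,\beta \in \{1,\dots,7\}$, which we have split into the three families of equations \eqref{eq:57}, \eqref{eq:14} and \eqref{eq:1457}. In particular, an instanton satisfies \eqref{eq:57} and \eqref{eq:14} simultaneously; note that only these first two families are needed here, and the third family \eqref{eq:1457} plays no role in the argument.

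Next I would invoke Lemma \ref{lm:reduc2}: since \eqref{eq:57} and \eqref{eq:14} hold simultaneously, condition \ref{it:Sid} of that lemma yields $S = \frac{\mu}{3}\Id$ with $\mu \neq 0$. The only consequence I need to extract is the single scalar statement $\tr S = \mu \neq 0$.

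Finally I would translate $\mu \neq 0$ into a statement about the intrinsic torsion. By \eqref{eq:t0} we have $\star(\varphi \wedge \d\varphi) = 4\mu$, and the torsion form $\tau_0 = \frac{1}{7}\star(\varphi \wedge \d\varphi)$ recorded in \eqref{eq: torsion_forms} therefore equals $\frac{4}{7}\mu \neq 0$. Since the coclosed $\rG_2$-structure $\varphi$ is purely coclosed precisely when $\tau_0 = 0$, the non-vanishing of $\tau_0$ shows that $\varphi$ is not purely coclosed, which is the assertion.

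I do not expect any genuine obstacle: the corollary is essentially a one-line consequence of Lemma \ref{lm:reduc2} combined with the already-established identity \eqref{eq:t0}. The only points meriting a moment of care are to confirm that the instanton hypothesis indeed supplies both \eqref{eq:57} and \eqref{eq:14}, so that Lemma \ref{lm:reduc2} is applicable, and to invoke correctly that, under the standing coclosedness assumption of this section, pure coclosedness is distinguished from plain coclosedness exactly by the condition $\tau_0 = 0$.
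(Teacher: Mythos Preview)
Your proposal is correct and follows essentially the same approach as the paper: invoke Lemma~\ref{lm:reduc2} to obtain $\mu\neq 0$, then use \eqref{eq:t0} to conclude $\tau_0\neq 0$, hence $\varphi$ is not purely coclosed.
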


From Lemma \ref{lm:reduc2}, we can also deduce some structural properties of the Lie algebras $\mg$ for which $\nabla^\lambda$ is an instanton.  

\begin{cor}\label{cor:g=c}If $\nabla^\lambda$ is an instanton, then $\dim\mg'=3$  (and thus $\mc=\mg'$ and $\mq=\mr$).
\end{cor}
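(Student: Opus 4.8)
The plan is to derive the statement entirely from the conditions already isolated in Lemma~\ref{lm:reduc2}, combined with the identity $\ker\bigl(j\colon\mc\to\so(\mq)\bigr)=(\mg')^\bot\cap\mc$ recalled in Section~\ref{sec:prelim2st}. First I would note that the instanton condition $(R^\lambda)^\beta_\alpha\wedge\psi=0$ for all $\alpha,\beta$ contains in particular the subsystems \eqref{eq:57} and \eqref{eq:14}; hence Lemma~\ref{lm:reduc2} applies and yields $S=\frac{\mu}{3}\Id$ with $\mu\neq0$ together with the bracket relation \eqref{eq:jcond}. Since $\dim\mc=3$ and the above identity gives the orthogonal splitting $\mc=\mg'\oplus\ker j$, we have $\dim\mg'=3-\dim\ker j$, so the whole statement reduces to proving $\ker j=0$: once this is established, $\mc=\mg'$ and $\mq=\mc^\bot=(\mg')^\bot=\mr$ are automatic.

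To prove $\ker j=0$ I would argue by contradiction, assuming there is a nonzero $z_0\in\ker j$. The idea is to feed $z_0$ into \eqref{eq:jcond}: since $j(z_0)=0$, the left-hand side vanishes and, as $\mu\neq0$, I would obtain $j(\tau(z_0)z')=0$ for every $z'\in\mc$, that is $\im\tau(z_0)\subseteq\ker j$. The key structural input is that, because $\varphi$ calibrates $\mc$, the restriction $\varphi|_\mc$ equals the volume form $e^{567}$ on the three-dimensional space $\mc$, so the assignment $z\mapsto\tau(z)$ of \eqref{eq:taudef} is injective, hence an isomorphism $\mc\to\so(\mc)$. In particular $\tau(z_0)\neq0$ is a rank-two skew-symmetric operator with $\ker\tau(z_0)=\langle z_0\rangle$ (using $\tau(z_0)z_0=0$, which holds since $\varphi(z_0,z_0,\cdot)=0$), so $\im\tau(z_0)=z_0^\bot\cap\mc$ is two-dimensional.

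Putting these together gives $\mc=\langle z_0\rangle\oplus(z_0^\bot\cap\mc)=\langle z_0\rangle+\im\tau(z_0)\subseteq\ker j$, forcing $j\equiv0$ and hence $\mg$ abelian, contrary to $\mg'\neq0$. This contradiction shows $\ker j=0$ and completes the argument. I do not expect a genuine obstacle here: the only point requiring care is the rank-two claim for $\tau(z_0)$, which follows from the injectivity of $\tau$ together with the elementary fact that a nonzero element of $\so(\mc)$ on the three-dimensional space $\mc$ automatically has a one-dimensional kernel, here necessarily $\langle z_0\rangle$.
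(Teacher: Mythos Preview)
Your argument is correct and follows essentially the same route as the paper's proof: both deduce from Lemma~\ref{lm:reduc2} that \eqref{eq:jcond} holds with $\mu\neq0$, assume a nonzero element $z_0\in\ker j$, and use \eqref{eq:jcond} with $z_0$ in one slot to force all of $\mc$ into $\ker j$, contradicting $\mg'\neq0$. The only cosmetic difference is that the paper picks an explicit orthonormal basis $\{z,z',z''\}$ of $\mc$ with $j(z)=0$ and $\tau(z')z''=z$ and reads off $j(z')=j(z'')=0$ directly, whereas you phrase the same step as $\im\tau(z_0)\subseteq\ker j$ together with the rank-two property of $\tau(z_0)$.
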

\begin{proof}This is a consequence of  \eqref{it:jc} in Lemma \ref{lm:reduc2}. Indeed, suppose that $j:\mc\to\so(\mq)$ satisfies \eqref{eq:jcond} for some $\mu\neq 0$ and $\mg'\subsetneq \mc$. Then, from \S \ref{sec:prelim2st}, we know that $0\neq \mc\cap(\mg')^\bot=\ker (j:\mc\to\so(\mq))$. Hence, there exists an orthonormal basis $\{z,z',z''\}$ of $\mc$ such that $j(z)=0$ and $\tau(z')z''=z$.  For these elements, \eqref{eq:jcond} gives
\[
[j(z''),j(z)]=0=\frac23 \mu j(z'),\quad [j(z),j(z')]=0=\frac23 \mu j(z'').
\]Since $\mu\neq 0$, we get $j(z')=j(z'')=0$ as well and thus $\ker (j:\mc\to\so(\mq))=\mc=\mc\cap(\mg')^\bot$, which is a contradiction since $0\neq \mg'\subset \mc$. Whenever $\nabla^\lambda$ is an instanton, \eqref{it:jc} in Lemma \ref{lm:reduc2} holds, and thus $\mg'=\mc$.
\end{proof}

\begin{cor}\label{cor:comker}
    If $\nabla^\lambda$ is a $\rG_2$-instanton and $(\d e^i)^-\neq 0$ for some $i\in\{5,6,7\}$
    , then there exists a vector $0\neq x\in\mq$ such that $j(z)x=0$ for all $z\in\mg'$. 
\end{cor}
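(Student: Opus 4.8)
The plan is to reduce the statement to an elementary fact about $\so(3)$-subalgebras of $\so(\mq)\cong\so(4)$, after extracting the algebraic structure forced by the instanton condition. Since $\nabla^\lambda$ is an instanton, equations \eqref{eq:57} and \eqref{eq:14} hold, so Lemma \ref{lm:reduc2} gives $S=\tfrac\mu3\Id$ with $\mu\neq0$ and the relation \eqref{eq:jcond}, while Corollary \ref{cor:g=c} gives $\mc=\mg'$. Writing $j(z)=j(z)^++j(z)^-$ with $j(z)^{\pm}\in\so(\mq)^{\pm}$ and using $j(z)^+=-\tfrac\mu3 j(z)^0$ from \eqref{eq:Sdiag2}, the relation \eqref{eq:jcond} splits, since $\so(\mq)^+$ and $\so(\mq)^-$ commute, into its self-dual part \eqref{eq:jj+} (which is automatic) and its anti-self-dual part \eqref{eq:jcondm}, namely $[j(z)^-,j(z')^-]=\tfrac23\mu\,j(\tau(z)z')^-$.

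Next I would use simplicity of $\so(3)$ to control the two relevant linear maps. The assignment $z\mapsto j(z)^0$ is a linear isomorphism $\mc\to\so(\mq)^+$, because $j(e_{i+4})^0$ corresponds to $\sigma_i^+$ from \eqref{eq:sigmabasis}. The hypothesis $(\d e^i)^-\neq0$ for some $i$ means precisely that $\phi\colon\mc\to\so(\mq)^-$, $\phi(z)=j(z)^-$, is nonzero. Endowing $\mc$ with the bracket $[z,z']:=\tau(z)z'$, which makes it isomorphic to $\so(3)$ by \eqref{eq:br}, relation \eqref{eq:jcondm} reads $[\phi(z),\phi(z')]=\tfrac23\mu\,\phi([z,z'])$, so $\ker\phi$ is an ideal of the simple algebra $\mc$; being proper, it is $0$, whence $\phi$ is an isomorphism onto $\so(\mq)^-$. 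Setting $A=j(z)^0$ and $\Psi:=\phi\circ(z\mapsto j(z)^0)^{-1}$, we have $V:=\{j(z):z\in\mc\}=\{-\tfrac\mu3 A+\Psi(A):A\in\so(\mq)^+\}\subset\so(\mq)^+\oplus\so(\mq)^-$. Using \eqref{eq:jcondm} together with $[j(z)^0,j(z')^0]=-2\,j(\tau(z)z')^0$ (a direct consequence of \eqref{eq:br} and \eqref{eq:Sdiag2}), one checks that $\hat\Psi:=-\tfrac3\mu\Psi$ satisfies $[\hat\Psi(A),\hat\Psi(A')]=\hat\Psi([A,A'])$, i.e.\ $\hat\Psi$ is a Lie algebra isomorphism; hence $V$ is a Lie subalgebra of $\so(\mq)$ isomorphic to $\so(3)$ whose projections onto $\so(\mq)^+$ and $\so(\mq)^-$ are both isomorphisms.

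Finally I would produce the fixed vector. A subalgebra of $\so(4)=\so(\mq)^+\oplus\so(\mq)^-$ of this type is the graph of an isomorphism between the two simple summands, so it acts on $\mq\cong\R^4$ with $\mq\otimes\C\cong V_+\otimes V_-$ restricting to the diagonal $\so(3)$ as $W\otimes W\cong\Lambda^2W\oplus S^2W$ for the standard two-dimensional $W$; the summand $\Lambda^2W$ is one-dimensional and trivial, and since the invariants of a real representation complexify to the invariants of its complexification, $\mq$ contains a genuine nonzero vector $x$ annihilated by all of $V$, i.e.\ $j(z)x=0$ for every $z\in\mg'$. (Concretely, in the quaternionic model $\mq\cong\H$ with $\so(\mq)^{+},\so(\mq)^{-}$ the left/right imaginary multiplications, the isomorphism property forces $j(z)^-=R_{w(z)}$ with $w=\tfrac\mu3\,\hat\theta\circ\rho$ for some $\hat\theta\in\SO(3)$, and the condition $j(z)x=0$ becomes $x\,\hat\theta(p)=p\,x$ for all $p\in\im\H$, which is solved by any nonzero $x$ realizing $\hat\theta$ as the conjugation $C_x$, such $x$ existing because $\H^*\to\SO(3)$ is surjective.) The main obstacle is the structural middle step: establishing, via the simplicity argument and the bracket identity, that $V$ is actually a Lie subalgebra isomorphic to $\so(3)$ projecting isomorphically onto both factors; once this is in place, the existence of a common null vector is the standard Clebsch--Gordan fact and is immediate.
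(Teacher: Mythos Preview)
Your argument is correct and follows essentially the same route as the paper: after invoking Lemma~\ref{lm:reduc2} and Corollary~\ref{cor:g=c}, you use simplicity of $\so(3)$ to upgrade the nonzero map $j^-$ to a Lie algebra isomorphism onto $\so(\mq)^-$, and then produce the common null vector via the quaternionic model. The only cosmetic differences are that the paper works with $j^+$ rather than $j^0$ (these differ by the scalar $-\mu/3$) and writes down the fixed vector explicitly as $x=a^{-1}$ once the automorphism is realized as conjugation by $a\in\Sp(1)$, whereas you also sketch the equivalent Clebsch--Gordan decomposition $W\otimes W\cong\Lambda^2W\oplus S^2W$; both yield the same conclusion.
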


\begin{proof}
    Assume that $\nabla^\lambda$ is a $\rG_2$-instanton, then, by Corollary \ref{cor:g=c}, $\mg'=\mc$ and $\mr=\mq$. Moreover, \eqref{eq:57} and \eqref{eq:14} hold, so  \eqref{it:jc}  in Lemma \ref{lm:reduc2} implies that $\mh:=j(\mg')$ is a 3-dimensional Lie subalgebra of $\so(\mr)$. Clearly, 
     \begin{equation*}
        \mh=\{j(z)=j(z)^++j(z)^-\in \so(\mr)^+\oplus\so(\mr)^-:z\in\mg'\}.
    \end{equation*}
         Moreover, by    \eqref{it:Sid} in Lemma \ref{lm:reduc2} and  \eqref{eq:Sdiag2}, $j^+:\mg'\to\so(\mr)^+$ is also injective.        
         Identifying $\so(\mr)^\pm$ with the imaginary quaternions $\im\H$, we can define a map $f:\im\H\to\im\H$ where for $p\in\im\H\simeq \so(\mr)^+$, $f(p)=j^-((j^+)^{-1}(p))$. 
         
        The hypothesis $(\d e^i)^-=\alpha_i^-\neq 0$ for some $i$ implies that the map $j^-:\mg'\to\so(\mr)$ is non-trivial, so $f\neq 0$. In addition, $j^\pm:\mg'\to\so(\mr)^\pm$ satisfy \eqref{eq:jj+} and \eqref{eq:jcondm}, so $f$ is a Lie algebra homomorphism.

         Since $\im\H\simeq \sp(1)$ is simple, $f$ is actually an automorphism, and there  is some $a\in \Sp(1)$ such that $f(p)=apa^{-1}$, for all $p\in\im\H$. 
         Hence, 
\begin{equation}\label{eq:jzh}
\mh=\{p+apa^{-1}:p\in\im\H\}.
\end{equation}

Recall that for $p,q\in\im\H$, the element $p+q\in\so(\mr)=\H$ acts on $x\in\H\simeq \mq$ by the formula
\begin{equation}\label{eq:quatprod}
    (p+q)\cdot x=px-xq.
\end{equation}

If ${\bf 1}$ denotes the unit in $\H$, one can easily check that the element $x:={\bf 1}\cdot a^{-1}\in\H\simeq \mr$ verifies $(p+apa^{-1})\cdot x=0$ for all $p\in\im\H$, for the action in \eqref{eq:quatprod}. Since $\mh$ is as in \eqref{eq:jzh}, this shows our claim.
\end{proof}


\begin{lm}\label{lm:inst} 
The connection $\nabla^\lambda$ is a $\rG_2$ instanton if and only if $\dim\mg'=3$ (i.e. $\mc=\mg'$) and the following conditions hold:
\begin{enumerate}
\item \label{it:lambis} $\lambda=1$,
\item \label{it:deimu} $(\dd e^{i+4})^+=\frac{\mu}3\sigma_i^+$ for $i=1,2,3$, for some $\mu\neq 0$,
\item \label{it:jg} the map $j:\mg'\to\so(\mr)$ satisfies
\begin{equation}\label{eq:jcondbis}
[j(z),j(z')]=\frac23 \mu j(\tau(z)z'), \qquad \forall z,z'\in\mg',
\end{equation}where $\tau\in\so(\mg')$ is defined by in \eqref{eq:taudef}.
\end{enumerate}
\end{lm}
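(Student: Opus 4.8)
The plan is to decompose the instanton equation $(R^\lambda)^\beta_\alpha\wedge\psi=0$ into the three families \eqref{eq:57}, \eqref{eq:14} and \eqref{eq:1457}, and to dispatch the first two by the work already done. Indeed, by Lemma \ref{lm:reduc2} equations \eqref{eq:57} and \eqref{eq:14} hold simultaneously exactly when $S=\frac\mu3\Id$ with $\mu\neq0$, the bracket relation \eqref{eq:jcond} is satisfied, and $\lambda$ obeys \eqref{it:lam}, i.e.\ either $\lambda=1$ or $\lambda=-\frac13$ with $j(z)^-=0$ for every $z\in\mc$. Since \eqref{eq:jcond} then holds with $\mu\neq0$, Corollary \ref{cor:g=c} forces $\dim\mg'=3$, so $\mc=\mg'$ and $\mq=\mr$; in particular \eqref{it:Sid} and \eqref{it:jc} of Lemma \ref{lm:reduc2} become precisely conditions \eqref{it:deimu} and \eqref{it:jg} of the statement. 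Everything therefore reduces to proving that, under these standing hypotheses, the remaining family \eqref{eq:1457} holds if and only if $\lambda=1$.

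To handle \eqref{eq:1457} I would compute the mixed curvature endomorphisms. As $\mc\subset\mz$, each $z\in\mc$ is central, so $[u,z]=0$ and \eqref{eq:Rlam} gives $R^\lambda_{u,z}=[\nabla^\lambda_u,\nabla^\lambda_z]$ for $u\in\mq$, $z\in\mc$. The decisive observation is the collapse occurring at $\lambda=1$: from $S=\frac\mu3\Id$ and \eqref{eq:Sdiag2} one has $\ad^+_u=-\frac\mu3\ad^0_u$, while at $\lambda=1$ the anti-self-dual term in \eqref{eq:npend1} drops out, leaving $\nabla^1_u=(\ad^+_u-(\ad^+_u)^*)+\frac\mu3(\ad^0_u-(\ad^0_u)^*)=0$ for all $u\in\mq$, irrespective of $j(z)^-$. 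Hence $R^1_{u,z}=0$, every mixed component $(R^1)^\beta_\alpha$ vanishes and \eqref{eq:1457} holds automatically. Combined with the previous paragraph this already yields the converse implication of the lemma: if $\dim\mg'=3$ and \eqref{it:lambis}--\eqref{it:jg} hold, then all three families vanish and $\nabla^1$ is a $\rG_2$-instanton.

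It remains, for the direct implication, to rule out the second branch of \eqref{it:lam}. Substituting $\lambda=-\frac13$, $j(z)^-=0$ and $\ad^+_u=-\frac\mu3\ad^0_u$ into \eqref{eq:npend1} gives $\nabla^{-1/3}_u=-\frac{2\mu}9(\ad^0_u-(\ad^0_u)^*)$, which is nonzero since $\mu\neq0$ and the maps $j(z)^0$ do not vanish; together with $\nabla^{-1/3}_z=\frac\mu9 j(z)^0+\frac{2\mu}9\tau(z)$ from \eqref{eq:npend2}, the commutator $R^{-1/3}_{u,z}=[\nabla^{-1/3}_u,\nabla^{-1/3}_z]$ is then read off from the bracket identities established in the proof of Lemma \ref{lm:Rab}. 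Wedging the resulting mixed components, which lie in $\mq^*\wedge\mc^*$, with $\psi$ as in \eqref{eq:stphi} should produce nonzero $6$-forms, contradicting \eqref{eq:1457}; hence this branch is excluded and $\lambda=1$, establishing \eqref{it:lambis}.

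The hard part will be this last step: showing that at $\lambda=-\frac13$ the mixed curvature genuinely fails the instanton condition, and not merely that it is nonzero. Because $\wedge\psi$ annihilates the whole $14$-dimensional summand $\Lambda^2_{14}\simeq\mg_2$, one must make sure the computed $(R^{-1/3})^\beta_\alpha$ do not happen to lie there. The cleanest way around a long index calculation is to note that this branch forces $\mg$ to be exactly the quaternionic Heisenberg algebra of Example \ref{ex:qheis} (with $\nu=\frac\mu3$), so that it suffices to evaluate one mixed component on the explicit matrices $j(e_5),j(e_6),j(e_7)$ recorded there and check directly that its wedge with $\psi$ does not vanish.
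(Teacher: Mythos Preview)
Your proposal is correct and follows essentially the same route as the paper: reduce to the three blocks \eqref{eq:57}--\eqref{eq:1457}, dispatch the first two by Lemma~\ref{lm:reduc2} and Corollary~\ref{cor:g=c}, use the collapse $\nabla^1_u=0$ (from $S=\tfrac{\mu}{3}\Id$ and \eqref{eq:npend1}) to kill \eqref{eq:1457} at $\lambda=1$, and then eliminate the $\lambda=-\tfrac13$ branch by an explicit mixed-curvature computation.

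The only place where you diverge from the paper is the final elimination. The paper simply writes down $\langle R^{-1/3}_{u,z}w,z'\rangle$ in terms of the universal maps $j(\cdot)^0$ (which depend only on $\varphi$, not on the specific algebra), evaluates the single component $(R^{-1/3})^5_1$, and wedges with $\psi$ to obtain $-\tfrac{10}{81}\mu^2\,e^{124567}\neq0$. Your observation that the branch $\lambda=-\tfrac13$, $j^-=0$, $S=\tfrac{\mu}{3}\Id$ forces the structure equations of Example~\ref{ex:qheis} with $\nu=\tfrac{\mu}{3}$ is correct and gives a legitimate alternative: it just repackages the same computation in concrete matrix form. Either way you must actually carry out one component and check it against $\psi$---your word ``should'' is the only soft spot, but the paper's computation confirms it. One small point worth making explicit in your converse argument: to pass from $R^1_{u,z}=0$ to ``every mixed component $(R^1)^\alpha_\beta$ vanishes'' you are tacitly using that $R^\lambda_{u,v}$ and $R^\lambda_{z,z'}$ preserve the splitting $\mq\oplus\mc$, which was established in the proof of Lemma~\ref{lm:Rab}.
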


\begin{proof}
Notice first that \eqref{it:Sid} in Lemma \ref{lm:reduc2} is equivalent to \eqref{it:deimu} in the statement, by definition \eqref{eq:cc} of $S$  and \eqref{eq:Sdiag2}. In addition, if this condition holds, by \eqref{eq:brbeta} and \eqref{eq:j0}, we get
\begin{equation}\label{eq:finaleq}
\ad_u^+=-\frac{\mu}3\ad_u^0, \qquad j(z)^+=-\frac{\mu}3j(z)^0, \qquad \forall u\in\mr, z\in\mc.
\end{equation}Therefore, using \eqref{eq:npend1}-\eqref{eq:npend2}, we obtain that for any $u\in\mr$, $z\in\mc$,
\begin{eqnarray}\label{eq:nabesp1}
\nabla_u^\lambda&=&\frac{\mu(\lambda-1)}{6}(\ad_u^0-(\ad_u^0)^*)-\frac{(\lambda-1)}2(\ad_u^--(\ad_u^-)^*),\\
\nabla_z^\lambda&=&\frac{(\lambda+1)\mu}{6}j(z)^0-\frac{\lambda+1}2j(z)^--\frac23\mu\lambda\tau(z).\label{eq:nabesp2}
\end{eqnarray}

Suppose first that $\nabla^\lambda$ is an instanton so that, in particular,  \eqref{eq:57}--\eqref{eq:14} hold and thus (1)-(2)-(3) in Lemma  \ref{lm:reduc2} are satisfied. Then, clearly, (2) and (3) in the statement hold.
We will show that \eqref{eq:1457} does not hold for $\lambda=-1/3$, which will imply that (1) in the statement holds as well.

In fact, for $\lambda=-1/3$, \eqref{eq:nabesp1} and \eqref{eq:nabesp2} read
\begin{eqnarray*}
\nabla_u^{-1/3}&=&-\frac29\mu(\ad_u^0-(\ad_u^0)^*)\\
\nabla_z^{-1/3}&=&\frac{\mu}{9}j(z)^0+\frac29\mu\tau(z).
\end{eqnarray*}
For any $u,w\in\mr$, $z,z'\in\mc$, straightforward computations give
\begin{equation}\label{eq:Ruz13}
\lela R^{-1/3}_{u,z}w,z'\rira
=-\frac2{81}\mu^2\left(\lela j(z)^0w,j(z')^0u\rira+2\lela j(\tau(z)z')^0u,w\rira\right)
\end{equation}
Hence, in view of \eqref{eq:Ruv}, \eqref{eq:Rzz} and \eqref{eq:Ruz13}, one has
\begin{eqnarray*}
(R^{-1/3})^5_1&=&\sum_{\begin{smallmatrix}
j=1,
\ldots,4,\\
k=5, 6, 7\end{smallmatrix}}\lela R^{-1/3}_{e_j,e_k}e_1,e_5\rira e^j\wedge e^k\\
&=&-\frac2{81}\mu^2
\left( e^1\wedge e^5+  e^2\wedge e^6 +e^4\wedge e^7
+2\left(e^2\wedge e^6+ e^4\wedge e^7\right)\right) .
\end{eqnarray*}

Since $\psi$ has the form in \eqref{eq:stphi}, we finally obtain
\begin{equation}
(R^{-1/3})^5_1\wedge \psi
= -\frac{10}{81}\mu^2
e^{124567},
\end{equation}which is non-zero because $\mu\neq 0$. 
Therefore, \eqref{eq:1457} does not hold for $\lambda=-\frac13$.

For the converse, assume (1)--(3) in the statement hold. By Lemma \ref{lm:reduc2}, \eqref{eq:57} and \eqref{eq:14} hold so we only need to check \eqref{eq:1457}.

When $\lambda=1$,    $\nabla_u^1=0$ by \eqref{eq:nabesp1}, and thus $R^1_{u,y}=0$ for all $u\in\mr$, $y\in\mg$. This, together with \eqref{eq:Rlamab}, implies $(R^1)_\beta^\alpha\in \Lambda^2\mc^*$ for every $\alpha\in \{1, \ldots,4\}$, $\beta\in\{5, 6,7\}$.
However, using \eqref{eq:Rzz}, one can easily check that for any $z,z',z''\in\mc$ and $u\in\mr$, $\lela R^1_{z,z'}u,z''\rira=0$ when $\lambda =1$ and \eqref{eq:finaleq} holds. Therefore,  $(R^1)_\beta^\alpha=0$ for every $\alpha=1, \ldots, 4$ and $\beta=5,\ldots, 7$ which implies \eqref{eq:1457}. Consequently, $\nabla^1$ is a $\rG_2$-instanton.
\end{proof}

\bibliographystyle{plain}
\bibliography{biblio}

\end{document}